\documentclass{article}
\usepackage{amsmath,amssymb,amsthm,graphicx}
\usepackage{hyperref}
\pdfoutput=1

\oddsidemargin=0in
\evensidemargin=0in
\textwidth=6.5in

\newtheorem{theorem}{Theorem}[section]
\newtheorem{lemma}{Lemma}[section]
\newtheorem{proposition}{Proposition}[section]

\renewcommand{\Pr}{P}

\title{The Search for Truth through Data: \\
{\large NP Decision Processes, ROC Functions, $P$-Functionals, Knowledge Updating and Sequential Learning}}

\author{Edsel A.\ Pe\~na \\ 
Department of Statistics \\
University of South Carolina \\
Columbia, SC 29208 USA}

\date{\today}

\begin{document}

\maketitle

\begin{abstract}

Strong and vehement criticisms of statistical decision-making procedures, especially those using $P$-values and a level of significance (LoS) of $\alpha = .05$, have abounded in recent years and is still continuing. For the sake of the scientific endeavor and the search for truth through data, there is tremendous impetus to re-examine and to improve these statistical decision-making procedures. This paper re-visits the fundamental problem of deciding the truth, based on data, between two competing hypotheses: a null $H_0$ and an alternative $H_1$. 
The Neyman-Pearson (NP) most powerful (MP) decision function, together with its power, remains the linchpin of  statistical decision-making. Associated with this procedure is a decision process and a receiver operating characteristic (ROC) function. It is proposed that in reporting the outcome of the NP MP decision function, for a specified LoS, that it should always be accompanied by the value of the (equivalently, logarithm) likelihood ratio based on the decision function. The $P$-functional, which is the usual $P$-value statistic, associated with the decision process is re-examined. It is pointed out that $P$ could be used in an equivalent implementation of the NP MP decision function, but if one wants to use its value to quantify the magnitude of support for either $H_0$ or $H_1$, then it should be the value of its (equivalently, logarithm) density function under $H_1$, which is the derivative of the ROC function, which  should be reported. This will avoid the fallacy that smaller values of $P$ are more supportive of $H_1$. Replicability of results is discussed in the context of realizations of the decision function or the $P$-functional. It is demonstrated that a coherent manner of acquiring knowledge about $H_0$ and $H_1$ is via sequential learning through Bayesian updating. But, it is also shown that publication bias could lead sequential updating astray in determining which of $H_0$ or $H_1$ is true. It is argued that a decision-maker can choose his {\em own} LoS, instead of using the conventional LoS of $\alpha = .05$, since the summary measures accompanying the realized decision take into account the chosen LoS. Since a decision-maker is then free to choose his LoS, the question of how to choose it optimally arises. Three approaches for choosing the LoS are discussed, each appropriate for a specific situation a decision-maker encounters. A new approach to sample size determination is also described.  The ideas are illustrated using concrete problems and a re-examination of Fisher's lady tea-tasting experiment. It is hoped that recommendations under this fundamental setting will extend to complex settings and lessen criticisms of methods relying on $P$-values and an LoS of $\alpha=.05$.

\medskip

\noindent
{\em\bf Key Words and Phrases:}
Bayes Decision Function; Hypothesis Testing; Level of Significance; Neyman-Pearson Fundamental Lemma;  Null Hypothesis Significance Testing; Replicability; $P$-Value; Power; Replicability; Sample Size Determination; Strong Law of Large Numbers.

\medskip

\noindent
{\em\bf AMS 2010 Subject Classification:} Primary: 62-07, 62A01, 62C05; Secondary: 62F03, 62F15, 62G10.

\end{abstract}

\section{The Search for Truth and Knowledge Representation}
\label{sect-introduction}

Apart from humankind's desire to propagate its own species or, as Richard Dawkins argues in \cite{Daw06}, their own genes, the search for truth and the acquisition of knowledge is one of the noblest among the multitude of human endeavors, if not its {\em raison d'\^etre}. As John 8:32 in the Bible states:
%
{``And ye shall know the truth and the truth shall make you free.''}
%
This passage, etched in a wall of the main lobby of the headquarters of the United States Central Intelligence Agency, should not just be the mantra of intelligence people, but it should be for all scientists trying to comprehend nature and the workings of the universe.

There are several types of truths. There are mathematical truths, called theorems, which are established through a purely deductive process using logic and starting from basic axioms and rules of operations. An example of such a mathematical truth is that $\sqrt{2}$ is not a rational number, a fact that is established with certainty mathematically. But there are truths, which usually correspond to physical realities, that could not be established through purely deductive reasoning, or even if they are derived through mathematical arguments, still require for their confirmations an inductive statistical approach using data obtained from observations and/or designed experiments. Professor Bradley Efron, the 2019 International Prize in Statistics awardee, refers to these type of truths as ``eternal and long-lasting'' during his speech at the World Statistics Congress this year in Kuala Lumpur, Malaysia \cite{EfronWSC2019}. These could include 
physical truths arising from theoretical and mathematical considerations. An example is Albert Einstein's result concerning the bending of light rays due to massive bodies, a consequence of his general theory of relativity, but which still required verification based on observed data to be fully accepted by the scientific community as a truth about reality. Incidentally, this year, 2019, is the centenary of the empirical verification of this theoretical result of Einstein by the team headed by Sir Arthur Eddington based on observations about the bending of light rays during a total solar eclipse; see \cite{NYT1919}. There are also truths, which may be subject to different interpretations, such as pertaining to whether eating red and processed meat poses health risks to people, an issue that flared up anew with the recent publication of an article contradicting purportedly established results showing the hazards of eating red and processed meat (see \cite{Zer19,Car19}).

But how do we characterize or describe truths? First, truth could be in the form of a set of mathematical formulas, a chemical structure, a biological pathway, or specified by a deterministic quantity, though the value of the quantity may not be known. Examples of these are a follows.
%
(a) Physical laws that are encapsulated in mathematical formulas such as Isaac Newton's $F = ma$, Einstein's $E = mc^2$, James Clerk Maxwell's equations governing electricity and magnetism, and Paul Dirac's relativistic wave equation.
(b) The double-helix structure of deoxyribonucleic acid (DNA), the carrier of genetic or hereditary information in almost all living things.
(c) The speed of light in a vacuum, denoted by $c$.
(d) The state of a defendant in a criminal murder trial which could either be guilty or innocent, but not both.
(e) The proportion ($\theta$) of the American Electorate who will vote for Donald Trump on November 3, 2020, Presidential Election Day. Here, truth may pertain to whether $\theta \ge .50$ or $\theta < .50$.
(f) The exact amount of federal taxes paid by Trump during the period from 2005 to 2015.
%
Second, truth could be represented via probability distributions. Some examples falling in this category are as follows.
%
(a) The number of deaths ($N$) that will be caused by vehicular accidents in 2020 in the State of South Carolina. A possible representation of the truth about $N$ is that it has a Poisson probability function with mean rate $\lambda$.
(b) The number of mass shootings that will occur in the United States during 2020. Again, this could be modeled by a Poisson probability function with mean rate $\lambda$.
(c) The position and momentum of a sub-atomic particle in a chamber, which according to Heisenberg's Uncertainty Principle, could only be described by their joint probability function.
(d) The temperature at noontime on January 1, 2020 at Columbia Metropolitan Airport, which could be described by a probability distribution function, such as a Gaussian distribution with mean $\mu$ and standard deviation $\sigma$.
%

However, even if the truth could be represented in a deterministic manner, when data are gathered through experiments for use in verifying or confirming the hypothesized deterministic representation, such data will be contaminated by measurement errors or random noise, so the observables are still described by a probability distribution which contains information about the underlying truth. For example, when Albert A.\ Michelson performed his experiment in 1879 to determine the speed of light, $c$, his measured values were not all identical since they were a function of $c$ and some contaminating values.

By virtue of the inductive statistical approach in determining, establishing, or confirming a truth, total certainty may not be achievable, in contrast to those obtained through a purely deductive approach such as the truth about the irrationality of $\sqrt{2}$. To illustrate via a very simple example, consider possessing a `coin' which is either ordinary (has a head and a tail) or mutated (has two heads).  Suppose that we could not directly examine it, but we could observe the outcome (face up) when it is tossed. We could keep tossing this coin sequentially and if after $n$ tosses we have obtained $n$ heads, then we still could not be certain that it is the mutated coin, however large $n$ is. But, an observed tail in any of these $n$ tosses implies with certainty that we have the ordinary coin.

In this paper we revisit the problem of determining which of two simple hypotheses is true based on data. This setting corresponds to the Neyman-Pearson framework of decision-making in its most fundamental form. We will then discuss the notion of a $P$-functional, the $P$-value statistic, which arguably is the concept most identified with statistical thinking, but which has gained notoriety in recent years, and has been attacked and blamed for the ills that have befallen the scientific endeavor; see, for instance, the papers which deal with issues and controversies about $P$-values \cite{HalKra02,TraMar15,Ion05,GigMar15,Nuz15,Bak16,WasLaz16,WasSchLaz19,Gre19,McSGalGelRobTac19}. In the same token that a deeper understanding of hypothesis testing was facilitated by first considering the simple null hypothesis versus simple alternative hypothesis, leading to the Neyman-Pearson Fundamental Lemma \cite{NeyPea33}, it is our viewpoint that a better understanding of $P$-functionals will also be facilitated by considering this fundamental setting. We will then examine ways in which we could improve the process of determining the truth based on data, address how results of statistical decision-making and observed $P$-functionals should be reported to be more informative and less misleading. We will also discuss the issue of replicability of scientific results, a concept in the forefront of what is considered good science. In the process, we will discuss the idea of sequential learning as a coherent way of getting to the truth. However, we will also  demonstrate the perils of publication bias in the sequential acquisition of knowledge. A weakness of the current practice of statistical decision-making is the reliance on a cut-off value for the level of significance (LoS), usually set at 5\%, but which is difficult to justify logically within the mathematical theory. We will therefore examine the issue of determining an optimal LoS which will lead to a new approach to sample size determination. The ideas and proposals will be illustrated using concrete situations, including a re-examination of R.\ A.\ Fisher's lady tea-tasting experiment, which ushered null hypothesis significance testing (NHST).

\section{Experiments, Observables, and Models}
\label{sec-experiments}

We will be interested in the simplest of all decision problems: determining which of two competing simple hypotheses, denoted by $H_0$ and $H_1$, is true. To do this determination, an experiment or study will be performed leading to observing the realization $x$ of a random entity $X$ taking values in a sample space $\mathfrak{X}$. In parallel, we also perform another experiment leading to observing the realization $u$ of a standard uniform random variable $U$ taking values in $\mathfrak{U} = [0,1]$. $X$ and $U$ are stochastically independent. Thus, the data will be $(x,u) \in \mathfrak{X} \times \mathfrak{U}$.
When $H_0$ is the truth, $X$ is governed by a probability measure $P_0$, whereas if $H_1$ is the truth, then $X$ is governed by a probability measure $P_1$, where $P_0$ and $P_1$ are probability measures on the measurable space $(\mathfrak{X}, \sigma(\mathfrak{X}))$. We denote by $\nu$ a measure on $(\mathfrak{X}, \sigma(\mathfrak{X}))$ which dominates $P_0$ and $P_1$ (e.g., $\nu = P_0 + P_1$), and denote by $f_0$ and $f_1$ the probability density functions (pdfs) associated with $P_0$ and $P_1$ with respect to $\nu$.
Thus, we may re-state the decision problem as deciding between $H_0: X \sim f_0$ or $H_1: X \sim f_1$ based on the data $(x,u)$. Note that, to be more precise, under $H_i$, the distribution of $(X,U)$ is $P_i \otimes \lambda$, where $\lambda$ is Lebesgue measure on $(\mathfrak{U},\sigma(\mathfrak{U}))$, hence it has density function $f_i^*(x,u) = f_i(x) I\{0 \le u \le 1\}$ with respect to $\nu \otimes \lambda$.
The introduction of the parallel experiment leading to observing $U$ is to provide a randomizer in case there is a need to randomize to make the final decision. We generate this randomizer $u$ in conjunction with the experiment leading to $x$ so that the use of the randomizer, if needed, does {\em not} acquire an outsize importance in the decision-making process. Thus, $u$ could be viewed as just being a portion of the data $(x,u)$. For a discussion of the use of randomizers in decision-making and to address criticisms of  their use, see section 3 of \cite{HabPen11}.

There is a dichotomy of what we would like to do regarding $H_0$ and $H_1$. First, given the observed data $(x,u)$, we may want to make a decision which of $H_0$ or $H_1$ is the truth, with proper consideration of the costs associated with erroneous choices. This is the purview of decision theory and the usual hypothesis testing approach. Second, upon seeing the data $(x,u)$, we may simply want to update our current knowledge about $H_0$ and $H_1$. A question arises how to represent our knowledge of which of $H_0$ and $H_1$ is the truth. We shall do so by assigning subjective prior probabilities of $\kappa_0$ and $\kappa_1 = 1 - \kappa_0$, with $\kappa_0 \in (0,1)$ representing our prior and current belief that $H_0$ is the truth. Given the observed data, the result of the decision function, or the value of the $P$-functional, the prior probabilities will then be updated, via Bayes theorem, to the posterior probabilities, which will then serve as, or could inform, the next set of prior probabilities of $H_0$ and $H_1$ to be used in the next study. This is the essence of sequential learning and updating. The first and second goals could be integrated by pre-specifying a threshold such that when the posterior probability of $H_0$ [$H_1$] surpasses this threshold, we then declare that $H_0$ [$H_1$] corresponds to the truth.

\section{Space of Decision Functions}
\label{sec-decision functions}

Let $\mathfrak{G}$ be  the space [of equivalence classes] of all measurable functions from $(\mathfrak{X} \times \mathfrak{U}, \sigma(\mathfrak{X}) \otimes \sigma(\mathfrak{U}))$ into $(\Re,\sigma(\Re))$ which are square-integrable with respect to $P_0 \otimes \lambda$ or $f_0 d(\nu \otimes \lambda)$. We endow this space with the inner product given by, for every $g_1, g_2 \in \mathfrak{G}$,
$$\langle g_1, g_2 \rangle = \int_{\mathfrak{X} \times \mathfrak{U}} g_1 g_2 f_0 d(\nu \otimes \lambda) = \int_0^1 \int_\mathfrak{X} g_1(x,u) g_2(x,u) f_0(x) \nu(dx) du.$$
The norm for $\mathfrak{G}$ is $||g|| = \sqrt{\langle g,g \rangle}$. The space of decision functions for $H_0$ versus $H_1$ is the subset $\mathfrak{D}$ of $\mathfrak{G}$ given by
\begin{equation}
\label{space of decision functions}
\mathfrak{D} = \{\delta \in \mathfrak{G}: \forall (x,u) \in \mathfrak{X} \times \mathfrak{U}, \delta(x,u) \in \{0,1\}\}.
\end{equation}
For $\delta \in \mathfrak{D}$, $\delta(x,u) = 1 (0)$ means to decide in favor of $H_1$ ($H_0$).

The likelihood function associated with $f_0$ and $f_1$ will be defined via
%
$\Lambda(x) = f_1(x)/f_0(x)$
%
with the convention that $0/0 = 0$.
Observe that in terms of the expectation operators, we have
$$E_{0} [g(X,U)] = \langle g, 1 \rangle \quad \mbox{and} \quad E_{1} [g(X,U)] = \langle g, \Lambda \rangle$$
with $E_{i}(\cdot)$ the expectation operator under $P_i \otimes \lambda$ for $i = 0, 1$.

Given a $\delta \in \mathfrak{D}$, its size and power are defined, respectively, via
\begin{equation}
\label{size and power}
\alpha_\delta = \langle \delta, 1 \rangle \quad \mbox{and} \quad \pi_\delta = \langle \delta, \Lambda \rangle.
\end{equation}
For $\alpha \in [0,1]$, a decision function $\delta \in \mathfrak{D}$ is said to be of level $\alpha$ if $\alpha_\delta \le \alpha$. A $\delta^* \in \mathfrak{D}$ is a most powerful  (MP) decision function of level $\alpha$ if $\alpha_{\delta^*} \le \alpha$ and for any other $\delta \in \mathfrak{D}$ with $\alpha_\delta \le \alpha$, we have $\pi_{\delta^*} \ge \pi_\delta$. We write such an MP decision function of level $\alpha$ as $\delta^*(\alpha) \equiv \delta^*(\cdot,\cdot;\alpha)$.
The alternative formulation using inner products immediately indicates that if one wants to maximize the power $\pi_{\delta^*(\alpha)} \ge \pi_\delta$ among all $\{0,1\}$-valued functions $\delta$, then the desired $\delta$ should be 1 (0) when $\Lambda$ is large (small), since the maximization problem in Lagrange form is equivalent to maximizing the mapping 
$$(\delta,\eta) \mapsto \langle \delta, \Lambda \rangle - \eta(\langle \delta, 1\rangle - \alpha) =
\langle \delta, (\Lambda - \eta) \rangle + \eta\alpha.$$
In this form we see that to maximize with respect to $\delta$, we must take $\delta^*(x,u) = 1 (0)$ whenever $\Lambda(x) > (<) \eta$ for some $\eta$.
This is the content of the Fundamental Lemma of Neyman and Pearson (1933) stated below. See also \cite{LehRom05,Sch95}.

\begin{theorem}{[Neyman-Pearson Lemma (1933) \cite{NeyPea33}]}
For $\alpha \in (0,1)$, a necessary and sufficient condition for $\delta^*(\alpha)$ to be an MP decision function of level $\alpha$ for $H_0: f = f_0$ versus $H_1: f = f_1$ is that, for all $(x,u) \in \mathfrak{X} \times \mathfrak{U}$, 
$$\delta^*(x,u;\alpha) = \left\{
\begin{array}{ccc}
1  & \mbox{if} & \Lambda(x) > c(\alpha) \\
0 & \mbox{if} & \Lambda(x) < c(\alpha)
\end{array}
\right.$$ 
for some $c(\alpha) \ge 0$ with $\alpha_{\delta^*(\alpha)} = \alpha$. A particular choice of this MP decision rule is
\begin{equation}
\label{NP decision rule}
\delta^*(x,u;\alpha) = I\{\Lambda(x) > c(\alpha)\} + I\{\Lambda(x) = c(\alpha); u \le \gamma(\alpha)\}
\end{equation}
where, with $\chi_c(x) = I\{\Lambda(x) > c\}$ and $\chi_{c-}(x) = I\{\Lambda(x) \ge c\}$,
$$c(\alpha) = \inf\{c \ge 0: \langle \chi_c, 1 \rangle \le \alpha\} \quad \mbox{and} \quad
\gamma(\alpha) = \frac{\alpha - \langle \chi_{c(\alpha)}, 1 \rangle}{\langle \chi_{c(\alpha)-}, 1 \rangle - \langle \chi_{c(\alpha)}, 1 \rangle}.$$
\end{theorem}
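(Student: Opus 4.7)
The plan is to prove the ``sufficient'' direction first via a pointwise inequality, then to verify that the explicit rule in (\ref{NP decision rule}) attains size exactly $\alpha$, and finally to use the equality case in the sufficiency inequality to deduce necessity. The linchpin throughout is the observation that for any $\delta^*$ of the stated threshold form with size $\alpha$ and any competitor $\delta \in \mathfrak{D}$ with $\alpha_\delta \le \alpha$, the function
\[h(x,u) := \bigl(\delta^*(x,u) - \delta(x,u)\bigr)\bigl(\Lambda(x) - c(\alpha)\bigr) \ge 0 \qquad \text{pointwise}.\]
Indeed, on $\{\Lambda > c(\alpha)\}$ we have $\delta^* = 1 \ge \delta$ and $\Lambda - c(\alpha) > 0$; on $\{\Lambda < c(\alpha)\}$ we have $\delta^* = 0 \le \delta$ and $\Lambda - c(\alpha) < 0$; on $\{\Lambda = c(\alpha)\}$ the second factor vanishes. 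Integrating $h$ against $f_0\,d(\nu \otimes \lambda)$ and using the definitions in (\ref{size and power}) gives
\[(\pi_{\delta^*} - \pi_\delta) - c(\alpha)(\alpha - \alpha_\delta) \ge 0,\]
from which $\pi_{\delta^*} \ge \pi_\delta$ follows immediately (since $c(\alpha) \ge 0$ and $\alpha \ge \alpha_\delta$), settling sufficiency.

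For the explicit construction, I would analyze the map $c \mapsto G(c) := \langle \chi_c, 1 \rangle = P_0(\Lambda(X) > c)$. This is nonincreasing, right-continuous (since $\{\Lambda > c_n\} \uparrow \{\Lambda > c\}$ as $c_n \downarrow c$), with $G(c) \to 0$ as $c \to \infty$. Setting $c(\alpha) := \inf\{c \ge 0: G(c) \le \alpha\}$, right-continuity at $c(\alpha)$ yields $\langle \chi_{c(\alpha)}, 1 \rangle \le \alpha$, while letting $c \uparrow c(\alpha)$ (on which $G(c) > \alpha$) and using $\langle \chi_{c-}, 1 \rangle = P_0(\Lambda \ge c)$ yields $\langle \chi_{c(\alpha)-}, 1 \rangle \ge \alpha$. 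Hence $\gamma(\alpha) \in [0,1]$ is well-defined, and by independence of $X$ and $U$ with $U$ uniform,
\[\alpha_{\delta^*(\alpha)} = \langle \chi_{c(\alpha)}, 1 \rangle + \gamma(\alpha)\bigl(\langle \chi_{c(\alpha)-}, 1 \rangle - \langle \chi_{c(\alpha)}, 1 \rangle\bigr) = \alpha,\]
so the rule in (\ref{NP decision rule}) has the prescribed threshold form with size exactly $\alpha$, hence is MP by the sufficiency step.

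Necessity follows by playing the sufficiency inequality against itself: if $\tilde\delta^*$ is any MP rule of level $\alpha$ and $\delta^*$ is the explicit rule just constructed, then $\pi_{\tilde\delta^*} = \pi_{\delta^*}$ while $\alpha_{\tilde\delta^*} \le \alpha$, so integrating $h$ yields $\int h\,f_0\,d(\nu \otimes \lambda) \le 0$. Combined with $h \ge 0$ pointwise, this forces $h = 0$ almost everywhere with respect to $f_0\,d(\nu \otimes \lambda)$, which in turn forces $\tilde\delta^* = 1$ on $\{\Lambda > c(\alpha)\}$ and $\tilde\delta^* = 0$ on $\{\Lambda < c(\alpha)\}$ outside a $P_0$-null set, which is the claimed form. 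I expect the main delicate point to be the bookkeeping on the boundary set $\{\Lambda(x) = c(\alpha)\}$: the theorem deliberately leaves $\delta^*$ unconstrained there, and the crux is showing that the $P_0$-mass on this set can be apportioned via the explicit randomizer weight $\gamma(\alpha)$ so that the total size equals $\alpha$ exactly. This is precisely what the right-continuity analysis of $G$ delivers, and it is also why the auxiliary randomizer $U$ was introduced in the first place.
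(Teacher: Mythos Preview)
Your sufficiency argument is identical to the paper's: both hinge on the pointwise inequality $(\delta^* - \delta)(\Lambda - c(\alpha)) \ge 0$, integrate it under $P_0$, and conclude $\pi_{\delta^*} \ge \pi_\delta$ from $c(\alpha) \ge 0$ and $\alpha_{\delta^*} = \alpha \ge \alpha_\delta$. Where you differ is in completeness rather than method. The paper's proof stops at sufficiency and simply asserts that the explicit rule ``clearly satisfies the size condition,'' with no treatment of necessity at all. Your proposal fills both gaps: you verify via right-continuity of $c \mapsto P_0(\Lambda > c)$ that $\gamma(\alpha) \in [0,1]$ and that the size is exactly $\alpha$, and you obtain necessity by running the same pointwise inequality with $\tilde\delta^*$ in place of $\delta$ and using $\pi_{\tilde\delta^*} = \pi_{\delta^*}$ to force $h = 0$ a.e. This extra work is correct and is the standard way to close the argument; it buys you the full ``necessary and sufficient'' claim that the theorem states but the paper's proof does not actually deliver.
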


\begin{proof}
For the $\delta^*$ in the statement of the theorem, and for any other $\delta$ with $\langle \delta, 1 \rangle \le \alpha$, $(\delta^* -\delta)(\Lambda - c(\alpha)) \ge 0$. Therefore,
$\langle \delta^* -\delta, \Lambda - c(\alpha) \rangle \ge 0.$
Expanding and re-arranging terms, we obtain that
$\langle \delta^*, \Lambda \rangle - \langle \delta, \Lambda \rangle \ge c(\alpha) (\langle \delta^*,1 \rangle - \langle \delta, 1 \rangle),$
and since $c(\alpha) \ge 0$ and $\langle \delta^*,1 \rangle = \alpha$ and $ \langle \delta, 1 \rangle \le \alpha$, then the right-hand side is at least equal to 0. Thus, $\pi_{\delta^*} = \langle \delta^*, \Lambda \rangle \ge \langle \delta, \Lambda \rangle = \pi_{\delta}$. The particular form in (\ref{NP decision rule}) clearly satisfies the size condition $\alpha_{\delta^*} =  \langle \delta^*, 1 \rangle = \alpha$.
\end{proof}

Most often we are able to simplify this MP decision function when we could express the likelihood function via
$\Lambda(x) \equiv f_1(x)/f_0(x) = Q[S(x)]$
for some statistic $S(\cdot)$ taking values in a measurable space $(\mathfrak{S},\sigma(\mathfrak{S}))$ and some measurable mapping $Q$ from $(\mathfrak{S},\sigma(\mathfrak{S}))$ into $(\Re_+,\mathfrak{B}_+)$, where $\mathfrak{B}_+$ is the Borel sigma-field of subsets of $\Re_+$. The set $\{x \in \mathfrak{X}: \ \Lambda(x) > c\}$ is equal to $\{x \in \mathfrak{X}:\  S(x) \in Q^{-1}(c,\infty)\}$. If $Q(\cdot)$ is nondecreasing in $S(\cdot)$, then $\{x \in \mathfrak{X}: \Lambda(x) > c\} = \{x \in \mathfrak{X}: S(x) > d\}$ for some $d$. Obtaining $d(\alpha)$ and $\gamma(\alpha)$ then becomes simpler since they are obtained under the $H_0: f=f_0$ distribution of $S(X)$, which might be easier to obtain compared to the distribution of $\Lambda(X)$.
The power of the level-$\alpha$ MP decision rule $\delta^*(\alpha)$ in (\ref{NP decision rule}) is given by
$$\rho(\alpha) \equiv \pi_{\delta^*(\alpha)} = \langle \delta^*(\alpha), \Lambda \rangle = \langle \chi_{c(\alpha)}, \Lambda \rangle +
\gamma(\alpha) \langle \chi_{c(\alpha)-} - \chi_{c(\alpha)}, \Lambda \rangle.$$
%

\section{NP Decision Process and ROC Function}
\label{sec-NP processes}

From the MP decision function given in (\ref{NP decision rule}), we could form a stochastic process, which will be called an NP decision process, given by
\begin{equation}
\label{NP Decision Process}
\Delta = \{\delta^*(\alpha) \equiv \delta^*(\cdot,\cdot;\alpha): \alpha \in [0,1]\}.
\end{equation}
This process has right-continuous non-decreasing sample paths with state space $\{0,1\}$. Associated with this decision process is the receiver operating characteristic (ROC) function given by
\begin{equation}
\label{ROC Function}
\rho = \{\rho(\alpha): \alpha \in [0,1]\}.
\end{equation}
We present properties of the ROC function in the following proposition. Some of these results will be needed in establishing Theorem \ref{posterior convergence}, a result pertaining to sequential learning.


\begin{proposition}
\label{prop-ROC properties}
Assume that $\nu\{x \in \mathfrak{X}: f_0(x) \ne f_1(x)\} > 0$ and consider the ROC function $\alpha \mapsto \rho(\alpha)$ associated with the NP decision process $\Delta = \{\delta^*(\alpha): \alpha \in (0,1)\}$. Then
\begin{itemize}
\item[(a)] $\forall \alpha \in (0,1): \rho(\alpha) > \alpha$;
\item[(b)] $\alpha \mapsto \rho(\alpha)$ is non-decreasing, concave, continuous, and is piecewise differentiable with $\alpha \mapsto \rho^\prime(\alpha) = \frac{d}{d\alpha} \rho(\alpha)$ non-increasing, almost everywhere (wrt Lebesgue measure);
\item[(c)] $\lim_{\alpha \rightarrow 0} \rho(\alpha) = 0$ and $\lim_{\alpha \rightarrow 1} \rho(\alpha) = 1$;
\item[(d)] $\rho^{\prime}(0+) \equiv \lim_{\alpha \downarrow 0} \frac{\rho(\alpha)}{\alpha} > 1$ and $\rho^{\prime}(1-) \equiv \lim_{\alpha \uparrow 1} \frac{1 - \rho(\alpha)}{1 - \alpha} < 1$.
\end{itemize}
\end{proposition}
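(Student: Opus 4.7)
The plan is to prove (b) first (monotonicity is immediate; concavity comes from a randomization between two MP rules; continuity and almost-everywhere differentiability follow from concavity), then read (c) off the explicit NP form of $\delta^*(\alpha)$, then deduce (a) from (b) together with a single witness point where $\rho$ strictly exceeds the diagonal, and finally obtain (d) as a consequence of (a) and concavity.

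For (b), monotonicity holds because every level-$\alpha_1$ rule is also a level-$\alpha_2$ rule whenever $\alpha_1 \le \alpha_2$. For concavity, given $\alpha_1,\alpha_2 \in [0,1]$ and $t \in (0,1)$, I would exhibit the explicit $\{0,1\}$-valued rule
\[
\tilde\delta(x,u) = I\{u \le t\}\, \delta^*\!\left(x,\, u/t;\, \alpha_1\right) + I\{u > t\}\, \delta^*\!\left(x,\, (u-t)/(1-t);\, \alpha_2\right),
\]
which lies in $\mathfrak{D}$ because the two indicator factors have disjoint support in $u$. A Fubini change of variable in $u$ shows that $\alpha_{\tilde\delta} = t\alpha_1 + (1-t)\alpha_2$ and $\pi_{\tilde\delta} = t\rho(\alpha_1) + (1-t)\rho(\alpha_2)$, and optimality of $\rho$ at the combined level then yields the concavity inequality. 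Continuity on $(0,1)$ and almost-everywhere differentiability with a non-increasing derivative are standard consequences of concavity on an open interval.

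For (c), from the NP form (\ref{NP decision rule}) I would write $\rho(\alpha) = P_1\{\Lambda > c(\alpha)\} + \gamma(\alpha)\, P_1\{\Lambda = c(\alpha)\}$, observe that $c(\alpha)\uparrow\infty$ as $\alpha\downarrow 0$ and $c(\alpha)\downarrow 0$ as $\alpha\uparrow 1$, and read off the two limits. For (a), the assumption $\nu\{f_0 \ne f_1\} > 0$ forces $P_0 \ne P_1$, so there is a set $A$ with $P_1(A) > P_0(A) =: \alpha_0$, which may be chosen with $\alpha_0 \in (0,1)$ in the standard setting of mutually absolutely continuous $P_0, P_1$ on an atomless space. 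The non-randomized rule $I\{x \in A\}$ then has size $\alpha_0$ and power $P_1(A)$, so $\rho(\alpha_0) \ge P_1(A) > \alpha_0$. Combined with $\rho(0)=0$, $\rho(1)=1$, and concavity, this single witness point forces $\rho(\alpha) > \alpha$ on all of $(0,1)$: if $\rho(\alpha_1) = \alpha_1$ for some $\alpha_1 > \alpha_0$, the secant from $(\alpha_0,\rho(\alpha_0))$ to $(\alpha_1,\alpha_1)$ has slope strictly below $1$, so concavity would bound $\rho(1)$ by the extended secant at $\alpha = 1$ and give $\rho(1) < 1$, contradicting $\rho(1) = 1$; the case $\alpha_1 < \alpha_0$ is symmetric and contradicts $\rho(0) = 0$.

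For (d), concavity together with $\rho(0)=0$ makes $\alpha \mapsto \rho(\alpha)/\alpha$ non-increasing, so $\rho^\prime(0+) = \sup_{\alpha > 0}\rho(\alpha)/\alpha \ge \rho(\alpha_0)/\alpha_0 > 1$; the parallel fact that $(1-\rho(\alpha))/(1-\alpha)$ is non-increasing in $\alpha$ (from concavity and $\rho(1)=1$) yields $\rho^\prime(1-) \le (1-\rho(\alpha_0))/(1-\alpha_0) < 1$. The main obstacle I anticipate is the extreme case in (c) where $P_1$ is not absolutely continuous with respect to $P_0$, so that $\Lambda = \infty$ on a $P_1$-positive set and $\rho(\alpha)$ need not vanish as $\alpha \downarrow 0$; I would handle this by interpreting $\rho(0)$ and $\rho(1)$ as the appropriate one-sided limits, or equivalently by adopting the implicit regularity assumption $P_1 \ll P_0$, under which all four parts go through cleanly.
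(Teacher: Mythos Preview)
Your overall strategy is sound and, in two places, is actually cleaner than the paper's. For (b), the paper simply takes $\bar\delta = a\delta_1^* + (1-a)\delta_2^*$, which is \emph{not} $\{0,1\}$-valued and hence not in $\mathfrak{D}$ as defined; your explicit construction using the randomizer $u$ repairs this and is the more rigorous version of the same idea. For (d), the paper computes the two limits explicitly by a case analysis on whether the distribution of $\Lambda$ under $P_0$ has atoms at its essential supremum $\bar\Lambda$ and infimum $\underline\Lambda$, ultimately identifying $\rho'(0+) = \bar\Lambda$ and $\rho'(1-) = \underline\Lambda$; your route via concavity and a single witness point gives the strict inequalities with almost no computation, though it does not identify the limits.

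There are two genuine issues, however. In (c), your assertion that $c(\alpha)\uparrow\infty$ as $\alpha\downarrow 0$ (and $c(\alpha)\downarrow 0$ as $\alpha\uparrow 1$) is false in general: $c(\alpha)$ converges to the essential supremum $\bar\Lambda = \sup\{b: P_0\{\Lambda > b\} > 0\}$, which is finite whenever $\Lambda$ is bounded (e.g.\ any discrete example such as the tea-tasting model). The paper handles this by working with $\bar\Lambda$ and $\underline\Lambda$ directly and invoking dominated convergence for $P_1\{\Lambda > c(\alpha)\} = \int I\{\Lambda > c(\alpha)\}\Lambda f_0\,d\nu$; you will need to do the same, since ``$c(\alpha)\to\infty$'' is not available as a shortcut.

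In (a), you introduce the side assumptions ``mutually absolutely continuous $P_0,P_1$ on an atomless space'' to force your witness $\alpha_0$ into $(0,1)$. These are not hypotheses of the proposition, and they are unnecessary. The paper's argument is shorter: the randomized rule $\delta(x,u) = I\{u \le \alpha\}$ lies in $\mathfrak{D}$, has size $\alpha$ and power $\alpha$, so $\rho(\alpha)\ge\alpha$; strictness then follows from the necessity half of the Neyman--Pearson lemma together with $\nu\{f_0\ne f_1\} > 0$, since a test ignoring $x$ cannot have the required NP form. If you prefer to keep your witness-plus-concavity structure, you can manufacture a witness in $(0,1)$ without extra hypotheses by taking $A = \{\Lambda > 1\}$ (which has $P_1(A) > P_0(A)$ and $P_0(A) < 1$) and, if $P_0(A) = 0$, padding with a portion of the randomizer $u$ to lift the size off zero.
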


\begin{proof}
Result (a) follows by taking the decision function $\delta(x;\alpha) = \alpha$ and using the condition that $\nu\{f_0 \ne f_1\} > 0$. For the results in (b), given $\alpha_1 < \alpha_2$, the collection of decision functions of level $\alpha_1$ is contained in the collection of decision functions of level $\alpha_2$, hence the MP decision function in the latter collection will have power at least equal to the MP decision function in the former collection. Thus, $\rho(\alpha_1) \le \rho(\alpha_2)$. Now, let $\alpha_1, \alpha_2 \in (0,1)$ and denote by $\delta_1^*$ and $\delta_2^*$ the MP decision functions of size $\alpha_1$ and $\alpha_2$. For $a \in (0,1)$, let $\alpha = a\alpha_1 + (1-a)\alpha_2$ and denote by $\delta^*$ the MP decision function of size $\alpha$.  Consider the decision function $\bar{\delta} = a \delta_1^* + (1-a)\delta_2^*$. The size of $\bar{\delta}$ is $a\alpha_1 + (1-a)\alpha_2$ and its power is $a\rho(\alpha_1) +  (1-a)\rho(\alpha_2)$.  Since $\delta^*$ is the MP decision function at size $\alpha = a\alpha_1 + (1-a)\alpha_2$, and $\bar{\delta}$ is also a decision function of size $\alpha$, then the power of $\delta^*$ is at least that of $\bar{\delta}$. Therefore,
$$\rho(a \alpha_1 + (1-a)\alpha_2) \ge  a\rho(\alpha_1) + (1-a)\rho(\alpha_2).$$
Since $\alpha_1$, $\alpha_2$, and $a$ are arbitrary elements of $(0,1)$, then this shows that $\alpha \mapsto \rho(\alpha)$ is concave. Continuity follows from this concavity. Concavity also implies that the left-hand and right-hand derivatives of $\rho(\alpha)$ exist, and since it could have at most a countable number of points where left-hand and right-hand derivatives are not equal, then it is piecewise differentiable.
Second result in (c) follows immediately from $1 \ge \rho(\alpha) > \alpha$ for all $\alpha \in (0,1)$.  Let $\bar{\Lambda} = \sup\{b \in \Re: \Pr_0\{\Lambda > b\} > 0\}$. Then $c(\alpha) \rightarrow \bar{\Lambda}$ as $\alpha \rightarrow 0$, hence 
\begin{eqnarray*}
\lim_{\alpha \rightarrow 0} \rho(\alpha) & = & \lim_{\alpha \rightarrow \infty} \left\{P_1\{\Lambda > c(\alpha)\} + \left[\frac{\alpha - \Pr_0(\Lambda > c(\alpha))}{\Pr_0(\Lambda = c(\alpha))} \right] \Pr_1(\Lambda = c(\alpha))\right\} \\
& = & 0
\end{eqnarray*}
since $\alpha \ge \alpha - \Pr_0(\Lambda > c(\alpha)) \ge 0$ and 
$$\Pr_1(\Lambda > c(\alpha)) = \int I\{\Lambda(x) > c(\alpha), f_0(x) >0\} \Lambda(x) f_0(x) \nu(dx) \rightarrow 0$$ 
by virtue of $\alpha \ge \Pr_0\{\Lambda > c(\alpha)\} = \int I\{f_0(x) > 0, \Lambda(x) > c(\alpha)\} f_0(x) \nu(dx) \rightarrow 0$ so that $\nu\{x: f_0(x) > 0, \Lambda(x) > c(\alpha)\} \rightarrow 0$ and using Dominated Convergence Theorem.
For the results in (d), as $\alpha \downarrow 0$, $c(\alpha) \rightarrow \bar{\Lambda}$. If the distribution of $\Lambda$ under $P_0$ is continuous, then
$$\rho(\alpha) = \Pr_1\{\Lambda > c(\alpha)\} = \int I\{\Lambda > c(\alpha)\} \Lambda f_0 \ge c(\alpha) \int I\{\Lambda > c(\alpha)\} f_0 = c(\alpha) \alpha,$$
hence $\rho(\alpha)/\alpha \ge c(\alpha) \rightarrow \bar{\Lambda}$ as $\alpha \downarrow 0$. But since $\nu\{f_0 \ne f_1\} > 0$ and $\int f_0 = \int f_1 = 1$, then $\bar{\Lambda} > 1$; in fact, it is possible to have $\bar{\Lambda} = \infty$. If $\Pr_0\{\Lambda = \bar{\Lambda}\} > 0$, then for $\alpha < \Pr_0\{\Lambda = \bar{\Lambda}\}$, $c(\alpha) = \bar{\Lambda}$, hence $\Pr_0\{\Lambda > c(\alpha)\} = 0$. For such an $\alpha$, we then have
$$\rho(\alpha) = \alpha \frac{\Pr_1\{\Lambda = \bar{\Lambda}\}}{\Pr_0\{\Lambda = \bar{\Lambda}\}} = \alpha \bar{\Lambda}.$$
Therefore, $\rho(\alpha)/\alpha = \bar{\Lambda}$ for all $\alpha < \Pr_0\{\Lambda = \bar{\Lambda}\}$, hence $\rho(\alpha)/\alpha \rightarrow \bar{\Lambda} > 1$ as $\alpha \downarrow 0$.
Let $\underline{\Lambda} = \in\{b \in \Re: \Pr_0\{\Lambda \le b\}  > 0\}$. Then, when $\alpha \uparrow 1$, $c(\alpha) \rightarrow \underline{\Lambda}$. If the distribution of $\Lambda$ under $P_0$ is continuous, then
$$1 - \rho(\alpha) = \Pr_1\{\Lambda \le c(\alpha)\} = \int I\{\Lambda \le c(\alpha)\} \Lambda f_0 \le c(\alpha) \Pr_0\{\Lambda \le c(\alpha)\} = c(\alpha) (1-\alpha)$$
hence $[1-\rho(\alpha)]/[1-\alpha] \le c(\alpha) \rightarrow \underline{\Lambda}$ as $\alpha \uparrow 1$. By same argument as for $\bar{\Lambda}$, $\underline{\Lambda} < 1$, and could take the value 0. If $\Pr_0\{\Lambda = \underline{\Lambda}\} > 0$, then for $\alpha$ satisfying $1 - \alpha < \Pr_0\{\Lambda = \underline{\Lambda}\}$, we have $c(\alpha) = \underline{\Lambda}$. Therefore, for $\alpha > 1 - \Pr_0\{\Lambda = \underline{\Lambda}\} = \Pr_0\{\Lambda > \underline{\Lambda}\}$, we have
\begin{eqnarray*}
\rho(\alpha) & = & \Pr_1\{\Lambda > \underline{\Lambda}\} + 
\left[\frac{\alpha - (1 - \Pr_0\{\Lambda = \underline{\Lambda}\})}{\Pr_0\{\Lambda = \underline{\Lambda}\}} \right] \Pr_1\{\Lambda = \underline{\Lambda}\} \\ 
& = & \Pr_1\{\Lambda \ge \underline{\Lambda}\} - (1-\alpha) \left[\frac{\Pr_1\{\Lambda = \underline{\Lambda}\}}{\Pr_0\{\Lambda = \underline{\Lambda}\}}\right].
\end{eqnarray*}
Thus, for $\alpha > \Pr_0\{\Lambda > \underline{\Lambda}\}$, we have
$$1-\rho(\alpha) = \Pr_1\{\Lambda < \underline{\Lambda}\} + (1-\alpha) \frac{\Pr_1\{\Lambda = \underline{\Lambda}\}}{\Pr_0\{\Lambda = \underline{\Lambda}\}} = (1-\alpha) \frac{\Pr_1\{\Lambda = \underline{\Lambda}\}}{\Pr_0\{\Lambda = \underline{\Lambda}\}}$$
since $0 \le \Pr_1\{\Lambda < \underline{\Lambda}\} =  \int I\{\Lambda < \underline{\Lambda}\} \Lambda f_0 \le \underline{\Lambda} \Pr_0\{\Lambda < \underline{\Lambda}\} = 0$. Therefore, as $\alpha \uparrow 1$, we have
$$\frac{1-\rho(\alpha)}{1-\alpha} \rightarrow \frac{\Pr_1\{\Lambda = \underline{\Lambda}\}}{\Pr_0\{\Lambda = \underline{\Lambda}\} }= \underline{\Lambda} < 1.$$
%
\end{proof}

\section{$P$-Functionals and their Properties}
\label{sec-P Functionals}

For the NP decision process $\Delta$, we define a (random) functional via
\begin{equation}
\label{P Functional}
P(X,U) = \inf\{\alpha: \delta^*(X,U;\alpha) = 1\}.
\end{equation}
This is the so-called $P$-{\em value} statistic, but we would like to stay away from this misnomer since this quantity is truly a {\em statistic}, that is, a random variable depending only on the sample data, and it is a characteristic of the decision process $\Delta$.
In terms of the $P$-functional, the size-$\alpha$ MP decision function is equivalent to
$$\delta^*(x,u;\alpha) = I\{P(x,u) \le \alpha\}.$$
The usual approach to testing $H_0$ versus $H_1$ is to pre-specify a level of significance (LoS) $\alpha_0$, which has usually been conventionally set to 0.05, and to use a test whose size is no more than $\alpha_0$. As such, in order to gain the most power, the test $\delta^*(\cdot,\cdot;\alpha_0)$ is utilized.
We present two important distributional properties of the $P$-functional.

\begin{theorem}
Under $H_0$, $P(X,U)$ has a standard uniform distribution, hence its density function under $H_0$ is $h_0(w) = 1$ for $w \in [0,1]$.
\end{theorem}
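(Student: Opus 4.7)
The plan is to exploit the identity $\delta^*(x,u;\alpha) = I\{P(x,u) \le \alpha\}$ that is displayed immediately before the theorem statement, together with the size property $\alpha_{\delta^*(\alpha)} = \alpha$ guaranteed by the Neyman--Pearson Lemma. This reduces the claim to a one-line computation of the cumulative distribution function of $P(X,U)$ under $H_0$.

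First I would justify the equivalence $\{\delta^*(X,U;\alpha) = 1\} = \{P(X,U) \le \alpha\}$ carefully from the definition $P(x,u) = \inf\{\beta : \delta^*(x,u;\beta) = 1\}$. The NP decision process $\Delta$ has right-continuous, non-decreasing $\{0,1\}$-valued sample paths as noted in Section 4. Thus $\delta^*(x,u;\alpha) = 1$ clearly implies $P(x,u) \le \alpha$; conversely, if $P(x,u) \le \alpha$, then for every $\beta > \alpha$ there exists some $\beta' \in (\alpha,\beta]$ with $\delta^*(x,u;\beta') = 1$, and by right-continuity and monotonicity this forces $\delta^*(x,u;\alpha) = 1$.

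Second, with this equivalence in hand, I would compute, for any $\alpha \in [0,1]$,
\[
P_0\{P(X,U) \le \alpha\} = E_0\bigl[I\{P(X,U) \le \alpha\}\bigr] = E_0[\delta^*(X,U;\alpha)] = \langle \delta^*(\alpha), 1 \rangle = \alpha_{\delta^*(\alpha)} = \alpha,
\]
using the definition of size in (\ref{size and power}) and the fact that the MP decision function in (\ref{NP decision rule}) achieves exact size $\alpha$ by construction (via the randomizer $\gamma(\alpha)$). This identifies the $H_0$-distribution function of $P(X,U)$ as the identity on $[0,1]$, i.e.\ the standard uniform distribution, with Lebesgue density $h_0(w) = 1$ on $[0,1]$.

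The only genuine subtlety is the boundary-case verification in the first step — making sure that the infimum defining $P(x,u)$ is attained (or at least that equality $P(x,u) = \alpha$ still lies in the decision-one set) so that the event $\{P \le \alpha\}$ coincides exactly, and not merely up to a null set, with $\{\delta^*(\alpha) = 1\}$. This is precisely where right-continuity of the sample path $\alpha \mapsto \delta^*(x,u;\alpha)$, built into the Neyman--Pearson construction through the randomizer, is essential. Once this point is settled, no further distributional computation is required.
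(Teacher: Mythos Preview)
Your proposal is correct and follows exactly the paper's approach: compute $\Pr_0\{P(X,U) \le w\} = \Pr_0\{\delta^*(X,U;w) = 1\} = \langle \delta^*(w), 1 \rangle = w$ using the displayed equivalence and the exact-size property of the NP decision function. The only difference is that you spell out the right-continuity argument justifying the equivalence $\{P \le \alpha\} = \{\delta^*(\alpha) = 1\}$, which the paper simply takes as given from the preceding display.
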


\begin{proof}
For $w \in [0,1]$, we have
\begin{eqnarray*}
\Pr_{0}\{P(X,U) \le w\} & = & \Pr_{0}\{\delta^*(X,U;w) = 1\} 
 =  E_{0}[\delta^*(X,U;w)] = \langle \delta^*(w), 1 \rangle
 =  w.
\end{eqnarray*}
\end{proof}

\begin{theorem}
Under $H_1$, $P(X,U)$ has distribution $\rho(\cdot)$, hence its density function under $H_1$ is $h_1(w) = \rho^\prime(w)$.
\end{theorem}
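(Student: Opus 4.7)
The plan is to mirror the proof of the preceding theorem (the one asserting uniformity under $H_0$), substituting the inner product against $\Lambda$ in place of the inner product against $1$, and then invoke Proposition~\ref{prop-ROC properties}(b) to recover the density.

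Concretely, first I would fix $w \in [0,1]$ and use the equivalence $\delta^*(x,u;\alpha) = I\{P(x,u) \le \alpha\}$ already established right after the definition of the $P$-functional to rewrite the event $\{P(X,U) \le w\}$ as $\{\delta^*(X,U;w) = 1\}$. Then, since $\delta^*(\cdot,\cdot;w)$ is $\{0,1\}$-valued,
$$\Pr_{1}\{P(X,U) \le w\} = E_{1}[\delta^*(X,U;w)] = \langle \delta^*(w), \Lambda \rangle = \pi_{\delta^*(w)} = \rho(w),$$
using the identity $E_1[g] = \langle g, \Lambda \rangle$ together with the definitions of $\pi_{\delta^*(w)}$ and $\rho$ given in (\ref{size and power}) and the paragraph following (\ref{NP Decision Process}). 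This identifies the cumulative distribution function of $P(X,U)$ under $H_1$ with $\rho(\cdot)$; the boundary behavior $\rho(0)=0$ and $\rho(1)=1$ needed for $\rho$ to be a bona fide cdf on $[0,1]$ follows from Proposition~\ref{prop-ROC properties}(c).

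For the density statement, I would appeal to Proposition~\ref{prop-ROC properties}(b): $\rho$ is absolutely continuous (being concave and continuous on $[0,1]$) and piecewise differentiable, so the Lebesgue density of $P(X,U)$ under $H_1$ with respect to Lebesgue measure exists and equals $\rho^\prime(w)$ almost everywhere. The only subtle point worth flagging is that $h_1(w) = \rho^\prime(w)$ is defined almost everywhere rather than pointwise, since $\rho$ may fail to be differentiable at the (at most countably many) points where its left- and right-derivatives disagree; at those exceptional points $h_1$ may be taken to be either one-sided derivative without affecting the distribution.

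There is no real obstacle here: the argument is essentially a one-line invocation of the inner-product representation of $E_1$, parallel to the $H_0$ case, combined with Proposition~\ref{prop-ROC properties}. If anything, the main care required is bookkeeping, namely making explicit that the cdf computed via the inner product against $\Lambda$ is exactly the ROC function $\rho$ whose regularity was worked out in the preceding section.
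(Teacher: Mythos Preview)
Your proposal is correct and follows essentially the same approach as the paper's own proof: both compute $\Pr_1\{P(X,U)\le w\}$ by rewriting the event as $\{\delta^*(X,U;w)=1\}$, converting the probability to an expectation, and identifying this with $\langle \delta^*(w),\Lambda\rangle = \rho(w)$. Your additional remarks on recovering the density via Proposition~\ref{prop-ROC properties}(b) and the boundary values via part~(c) are more explicit than the paper, which simply states the density conclusion without further comment, but the underlying argument is the same.
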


\begin{proof}
For $w \in [0,1]$, we have
\begin{eqnarray*}
\Pr_{1}\{P(X,U) \le w\} & = & \Pr_{1}\{\delta^*(X,U;w) = 1\} 
 =  E_{1}[\delta^*(X,U;w)] \\ & = & E_{0} [\delta^*(X,U;w) \Lambda(X)] = \langle \delta^*(w), \Lambda \rangle
 =  \rho(w).
\end{eqnarray*}
\end{proof}

Note therefore that if one could {\em only} observe the $P$-functional $P(X,U)$, the MP decision function of $H_0$ versus $H_1$ of size $\alpha_0$ is, by the Neyman-Pearson Lemma, $\delta(P(x,u);\alpha_0) = I\{P(x,u) \le \alpha_0\}$ since $w \mapsto \rho^\prime(w)$ is a non-increasing function. This coincides with the MP decision function based on $x$.
Note further that the existence of the $P$-functional has nothing to do with any pre-specified LoS. The $P$-functional exists whenever there is a decision process $\Delta$. The $P$-functional and the pre-specified LoS only come together when a decision about $H_0$ and $H_1$ is to be made. One may think of the $P$-functional as a decision process-induced transformation of the original data $(X,U)$ into a random variable (a statistic) $P(X,U)$ that is uniformly distributed under $H_0$ and whose distribution under $H_1$ is stochastically smaller than a standard uniform distribution, since $\rho(w) > w$ for every $w \in (0,1)$ with $\rho(0) = 0$ and $\rho(1) = 1$. As such there is nothing mysterious about the $P$-functionals (albeit, $P$-values) -- contrary to the negative attention and notoriety it has garnered in recent years! See the recent articles on $P$-values \cite{WasSchLaz19,FriBurHanWoo19} and the other references cited earlier.

That the $P$-functional is a statistic was emphasized in Kuffner and Walker \cite{KufWal19}, which also demonstrated that the $P$-value statistic is in a one-to-one and onto (a bijection) relationship with the sufficient statistic in the case when the sufficient statistic is one-dimensional. This bijection property between the $P$-functional and the sufficient statistic need not hold, however, if the sufficient statistic has dimension at least 2. For example, if $H_0: \mu=\mu_0, \sigma^2 > 0$ and $H_1: \mu \ne \mu_0, \sigma^2 > 0$ and the data set to test $H_0$ versus $H_1$ is $X_1, X_2, \ldots, X_n \sim N(\mu,\sigma^2)$, the sufficient statistic is $T = (\sum X_i, \sum X_i^2)$, which is two-dimensional and which could not be recovered if given only the $P$-value statistic based on the $t$-test statistic $T = (\bar{X} -\mu_0)/(S/\sqrt{n})$ of $H_0$ and $H_1$.  Note that in this case, the $t$-test statistic is not sufficient for $(\mu,\sigma^2)$. We note, though, that the bijection property between the $P$-functional and the sufficient statistic may still hold in higher-dimensions if we instead consider the {\em invariantly} sufficient test statistic. For example, the $t$-test statistic is invariantly sufficient under the location-scale group of transformations.

\section{Issue of Replicability}
\label{sec-Replicability}

In  the recent United States National Academies of Sciences, Engineering, and Medicine report on reproducibility and replicability in science \cite{NASReport2019}, a distinction is made between the notions of reproducibility and replicability. According to this report, {\em reproducibility} means being able to obtain consistent computational results using the same input data, computational steps, methods, code, and conditions of analysis; whereas,
{\em replicability} means being able to obtain consistent results across studies aimed at answering the same scientific question, each of which has obtained its own data. The notion of replicability is arguably the more important one in the context of the integrity and viability of scientific results. Let us examine the notion of replicability in the context of using the MP decision function and the $P$-functional in deciding between $H_0$ and $H_1$.

Consider a scientist performing a study to decide between $H_0$ and $H_1$. Following existing decision-making procedures, he specifies an LoS to use, and assume he chooses the traditional value of $\alpha_1 = 0.05$. Let $(x_1,u_1)$ be his observed data, and let
$$d_1 = \delta(x_1,u_1;\alpha_1) \quad \mbox{and} \quad p_1 = P(x_1,u_1)$$
be the observed decision and the observed $P$-functional, respectively.
Suppose it turns out that $d_1 = 1$. Then the scientist will conclude that $H_1$ is true, instead of $H_0$, at LoS $\alpha_1 = .05$. What does this {\em really} mean? If $H_0$ is in reality true, using the decision function $\delta(\cdot,\cdot;\alpha_1)$, there is a 0.05 chance that $H_0$ will be declared false; so the observed $d_1 = 1$ is one of these false discovery realizations. On the other hand, if $H_1$ is in reality true, there is a probability of $\rho(\alpha_1)$ of concluding that this is indeed the case, and the realized decision $d_1 = 1$ is one of these correct decisions. But this observed decision of $d_1 = 1$ does not preclude either of the two possibilities of $H_0$ or $H_1$. If a second scientist performs a study with LoS of $\alpha_2 = .05$ to decide between $H_0$ and $H_1$, this scientist may get a decision $d_2 = 0$, which will be contradicting the first scientist's decision, and we might then say that the first scientist's result is not replicable. But if $H_0$ is the true hypothesis, then the result of the second scientist was highly likely (probability of 0.95 prior to performing the study); while if $H_1$ is true, this false negative result by the second scientist was also plausible if the power $\rho(\alpha_2)$ is not high, with this power determined by the `effect size' or the `distance' between $f_0$ and $f_1$. As such the issue of replicability is a difficult question to resolve when considering the results of two scientists.

Let us examine this further when there are {\em many} replications. Suppose that there are $M = 100$ scientists, labeled $m=1,2,\ldots,M,$ who will perform their own independent studies, each using an LoS of $\alpha_m = .05$, to decide between $H_0$ and $H_1$, with the $m$th scientist using the MP decision function $\delta_m^*(\alpha_m) = \delta_m^*(x_m,u_m;\alpha_m)$ whose power is $\rho_m(\alpha_m)$. If $H_0$ is true, about 5 of these scientists will obtain values of $d_m = 1$ for their $\delta^*_m(x_m,u_m;\alpha_0)$; while, if $H_1$ is true, depending on their $\rho_m(\alpha_m)$s, there will tend to be more than 5 of them with $d_m = 1$. Under both cases, the occurrences of $d_m=1$ will appear in a random order for these $M$ scientists. With many independent replications, we will better see which of $H_0$ or $H_1$ is more plausible. {\em But,} any other scientist, or even the same scientist, performing another study to decide between $H_0$ and $H_1$ may come up with a decision which disagrees with the totality of results of the $M$ scientists. As such, when focusing on the results of individual scientists the replicability issue will always arise under both $H_0$ or $H_1$. 

The same thing happens with the $P$-functional. Under $H_0$, $P(X,U)$ has a uniform distribution over $[0,1]$, so that any value between $[0,1]$ is not surprising under $H_0$. Under $H_1$, the distribution of $P(X,U)$ will be right-skewed, so smaller values will be more likely. But, with one realization of $P(X,U)$ that is smaller than $\alpha=0.05$, it again does {\em not} preclude either $H_0$ or $H_1$. With 100 scientists performing independent studies and observing their $P$-functionals, if $H_0$ is true, then their observed $P$'s will tend to be uniform over $[0,1]$; while if $H_1$ is true, then their observed $P$'s will tend to have a histogram that is right-skewed. 
We also reiterate that if the observed $P(x,u) = p$ is less than $\alpha =$0.05, it is faulty to state that $H_0$ is rejected at LoS of $p$. But, it is fine to conclude that $H_0$ is rejected at LoS of $\alpha = 0.05$, provided that this LoS value was chosen prior to observing the data $(x,u)$. The point is that the observed data should not determine the LoS that is used to make the decision.

Appropriately, the NAS Report \cite{NASReport2019} points out the difficulty in assessing the notion of replicable results. It states as follows:

\begin{quote}
Because of the complicated relationship between replicability and its variety of sources, the validity of scientific results should be considered in the context of an entire body of evidence, rather than an individual study or an individual replication. Moreover, replication may be a matter of degree, rather than a binary result of ``success'' or ``failure.''
\end{quote}

To concretely demonstrate these issues concerning replicability, consider the two-sample problem of testing hypotheses about the difference of two population means, with the pair  of hypotheses
$$H_0: \mu_1 - \mu_0 = 0 \quad \mbox{versus} \quad H_1: \mu_1 - \mu_0 = \kappa$$
where $\kappa > 0$. For the $m$th scientist, with $m \in \{1,2,\ldots,M=100\}$, we suppose that his sample data are realizations of
$$X_m = (X_{m1},X_{m2},\ldots,X_{mn_m}) \sim N(\mu_0,\sigma^2);$$
$$Y_m = (Y_{m1},Y_{m2},\ldots,Y_{mn_m}) \sim N(\mu_1,\sigma^2),$$
with $X_m$ independent of $Y_m$. Here $\sigma^2 > 0$ is assumed unknown. Let the sample sizes be determined according to $n_m \sim POI(\lambda) + 5$. The possibly differing sample sizes among these $M$ scientists is meant to model the realistic situation where they have different experimental resources available to each of them, so some will have larger sample sizes, hence will have higher powers for their decision functions. It is without doubt that there will be investigators which will have more resources, possibly due to more research funding, while others will have meager resources but would still want to contribute to the resolution of a scientific question {\em even} with their limited resources. For the $m$th scientist the summary statistics from the two samples will be the sample means $\bar{X}_m$ and $\bar{Y}_m$ and the sample variances $S_{mX}^2$ and $S_{mY}^2$. The $\alpha$-size test procedure to be used by the $m$th scientist is
$\delta_m^*(X_m,Y_m;\alpha) = I\{T_{mc} \ge t_{2(n_m-1);\alpha}\}$
where 
$$T_{mc} = \frac{\bar{Y}_m - \bar{X}_m}{S_m \sqrt{2/n_m}}; \quad
S_m^2 = \frac{S_{mX}^2 + S_{mY}^2}{2}; \quad
t_{2(n_m-1);\alpha} = \mathcal{T}^{-1}(1-\alpha;2(n_m-1)),$$
where $\mathcal{T}(\cdot;k)$ is Student's central $t$-distribution with $k$ degrees-of-freedom. The $P$-functionals, which do not require randomizers because of continuity, are given by
\begin{eqnarray*}
P_m(X_m,Y_m) & = & \inf\{\alpha \in (0,1): \delta_m(X_m,Y_m;\alpha) = 1\} 
 =  1 - \mathcal{T}(T_{mc};2(n_m-1)).
\end{eqnarray*}
The ROC function of $\Delta_m = \{\delta_m^*(X_m,Y_m;\alpha): \alpha \in [0,1]\}$, is given by, for $\alpha \in [0,1]$,
\begin{equation}
\label{ROC Function in two-sample example}
\rho_m(\alpha;\kappa,\sigma,n_m) = 1 - \mathcal{T}\left(t_{2(n_m-1);\alpha};2(n_m-1),\frac{\kappa}{\sigma\sqrt{2/n_m}}\right),
\end{equation}
where $\mathcal{T}(\cdot;k,\omega)$ is a $k$ degrees-of-freedom non-central $t$-distribution with non-centrality parameter $\omega$. The derivative of this ROC function, which is the density function of $P_m(X_m,Y_m)$ under $H_1$, is given by
\begin{equation}
\label{PDF under H1}
\rho_m^\prime(\alpha;\kappa,\sigma,n_m) = \frac
{\mathcal{T}^\prime\left(\mathcal{T}^{-1}(1-\alpha;2(n_m-1));2(n_m-1),\frac{\kappa}{\sigma\sqrt{2/n_m}}\right)}
{\mathcal{T}^\prime\left(\mathcal{T}^{-1}(1-\alpha;2(n_m-1));2(n_m-1)\right)},
\end{equation}
where $\mathcal{T}^\prime(\cdot;\cdot,\cdot)$ is the associated density function for Student's $t$-distribution.
These functions are easily evaluated using the {\tt R} \cite{R} objects {\tt pt}, {\tt dt}, and {\tt qt}.
Figure \ref{ROC and PDFs} plots the ROC curve and the pdfs of $P_m(X_m,Y_m)$ under $H_0$ and $H_1$ for the case with $n_m = 5$, $\kappa = 5$, $\sigma = 5$, and $\alpha=.05$. Observe that the ROC curve is above the 45-degree line, while the PDF of $P(X,Y)$ under $H_1$ is right-skewed. The behaviors of the ROC function $\rho(\cdot)$ and $\rho^\prime(\cdot)$ agree with those stated in Proposition \ref{prop-ROC properties}, though it should be pointed out that this two-sample $t$-test is not the uniformly most powerful test (UMP) of $H_0$ versus $H_1$, but it is the UMP unbiased decision function \cite{LehRom05,Sch95}.
\begin{figure}
\caption{The ROC function and PDFs under $H_0$ and $H_1$ of $P_m(X_m,Y_m)$ for the two-sample testing problem with $n_m = 5$, $\mu_0=0$, $\mu_1=5$, $\sigma = 5$, and $\alpha = .05$.}
\label{ROC and PDFs}
\includegraphics[width=\textwidth,height=2in]{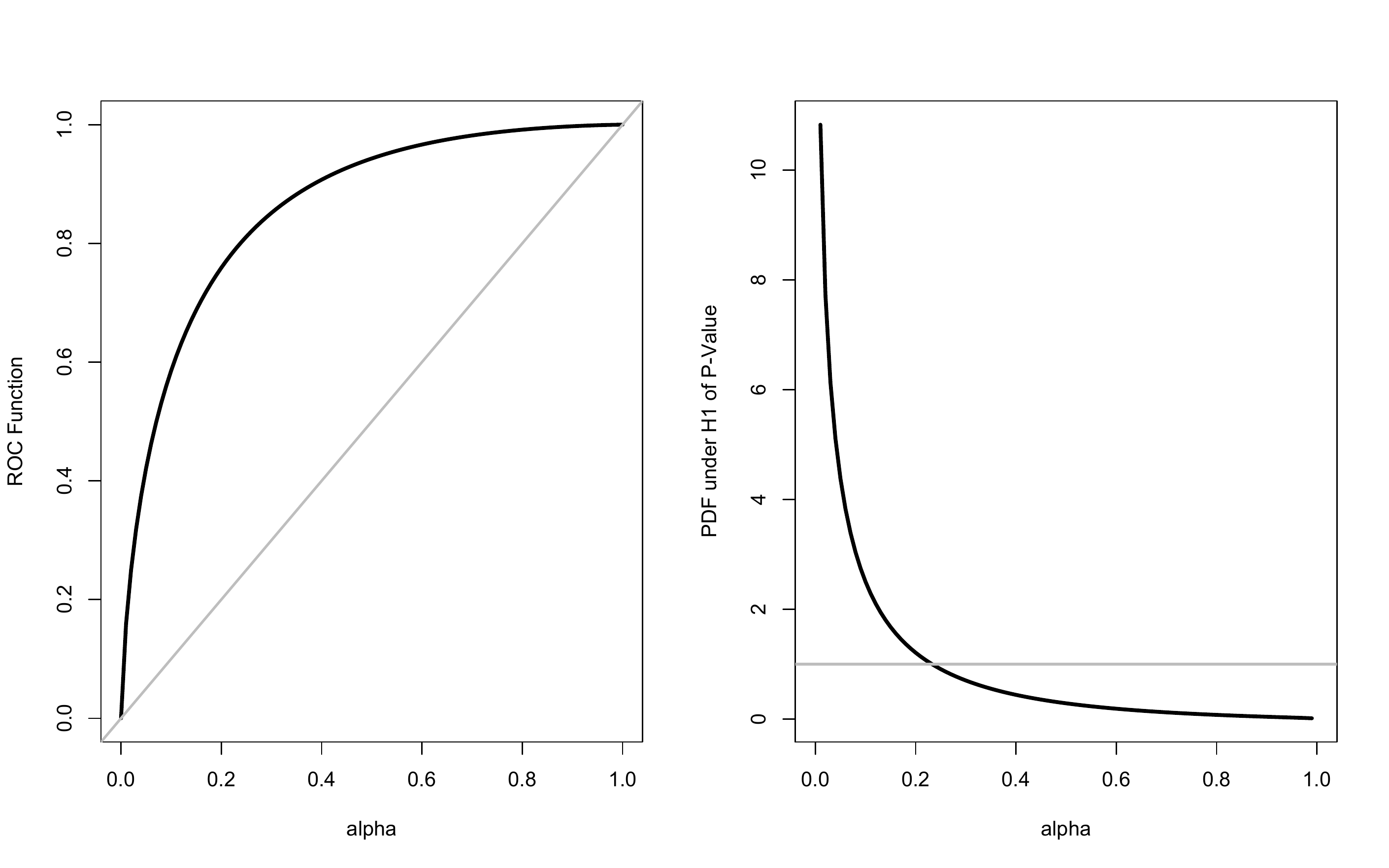}
\end{figure}

The plots in Figure \ref{under H0} represent the simulated results for the 100 scientists under $H_0$, while those in Figure \ref{under H1} are those under $H_1$. An LoS of $\alpha_0 = 0.05$, together with $\mu_0=0$, $\mu_1 = 2$, so $\kappa = 2$, and $\sigma = 5$, were utilized by each of these 100 scientists, though they could have also used varied LoSs. In each of these plot panels, the first represents the sequence of sample sizes $n_m$s; the second is the sequence of decisions  $d_m$s, together with the sequence of $p_m$s; the third is the histogram of the $p_m$s; and the fourth is the sequence of updated probabilities of $H_0$, which will be discussed in Section \ref{sec-Sequential Learning}. Looking at these plots, in particular the second panel in each figure,  assessing replicability of results, especially with just a few results, would be a difficult matter. Under $H_0$, decisions that coincide with not rejecting $H_0$ ($d_m=0$) would be assessed as replicable, but only if viewed under many replications; but, under $H_1$, the correct decisions of rejecting $H_0$ ($d_m=1$) may still be considered as not replicable since there are many more replications where $H_0$ is not rejected owing to the moderate powers of the decision functions used.

\begin{figure}
\caption{Sequences of the values of the decision function $\delta_m^*(\alpha)$ and values of the $P_m$-functional, under $H_0$, for the two-sample problem for 100 scientists. First panel is the sample size used for each sample, the second panel contains the decisions $d_m$s and the $p_m$s, the third panel is the histogram of the $p_m$s, and the fourth panel is the sequence of updated probabilities of $H_0$ based on updating on the $d_m$s and $p_m$s.}
\label{under H0}
\includegraphics[width=\textwidth,height=4in]{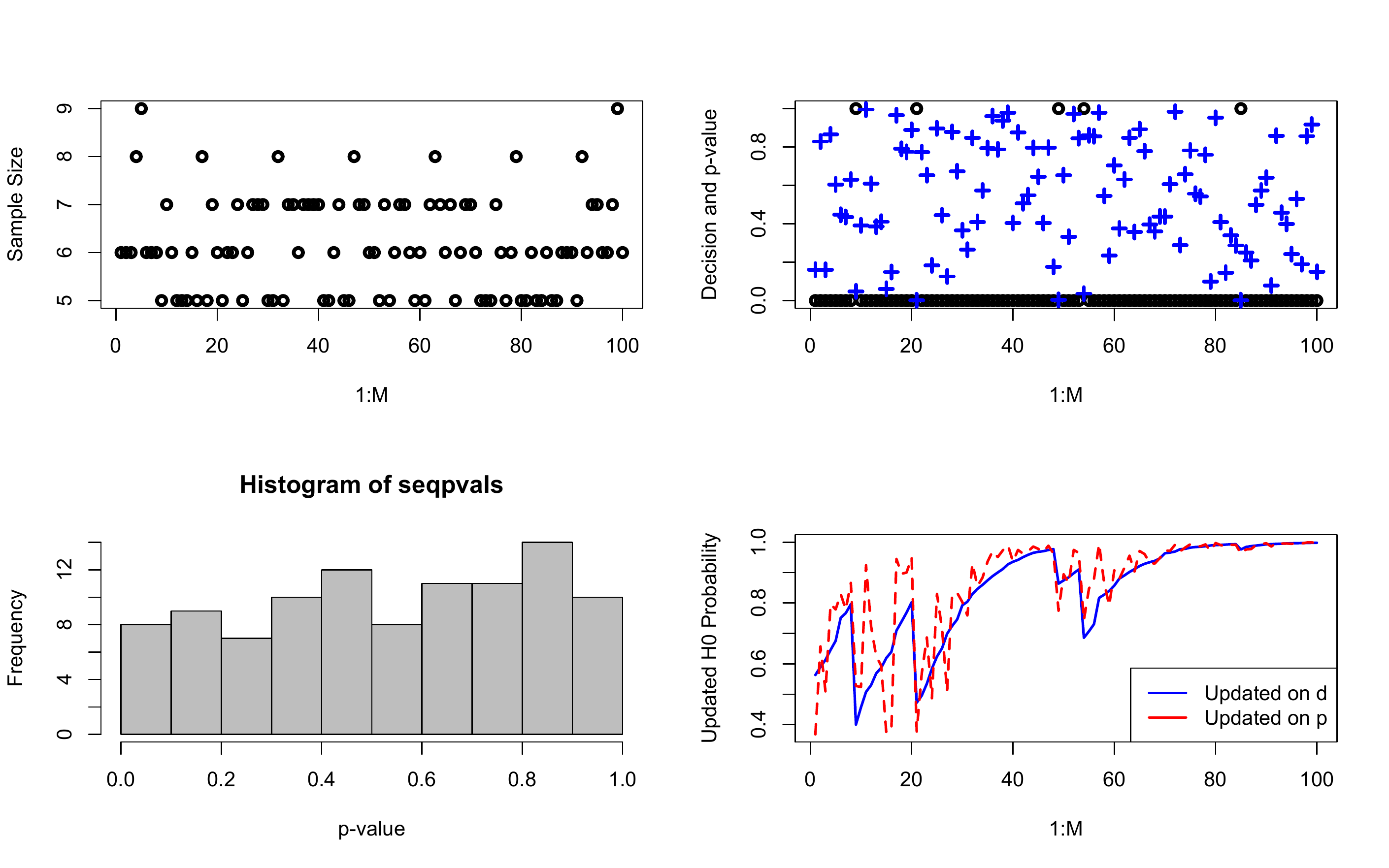}
\end{figure}

\begin{figure}
\caption{Sequences of the values of the decision function $\delta_m^*(\alpha)$ and values of the $P_m$-functional, under $H_1$, for the two-sample problem for 100 scientists. First panel is the sample size used for each sample, the second panel contains the decisions $d_m$s and the $p_m$s, the third panel is the histogram of the $p_m$s, and the fourth panel is the sequence of updated probabilities of $H_0$ based on updating on the $d_m$s and $p_m$s.}
\label{under H1}
\includegraphics[width=\textwidth,height=4in]{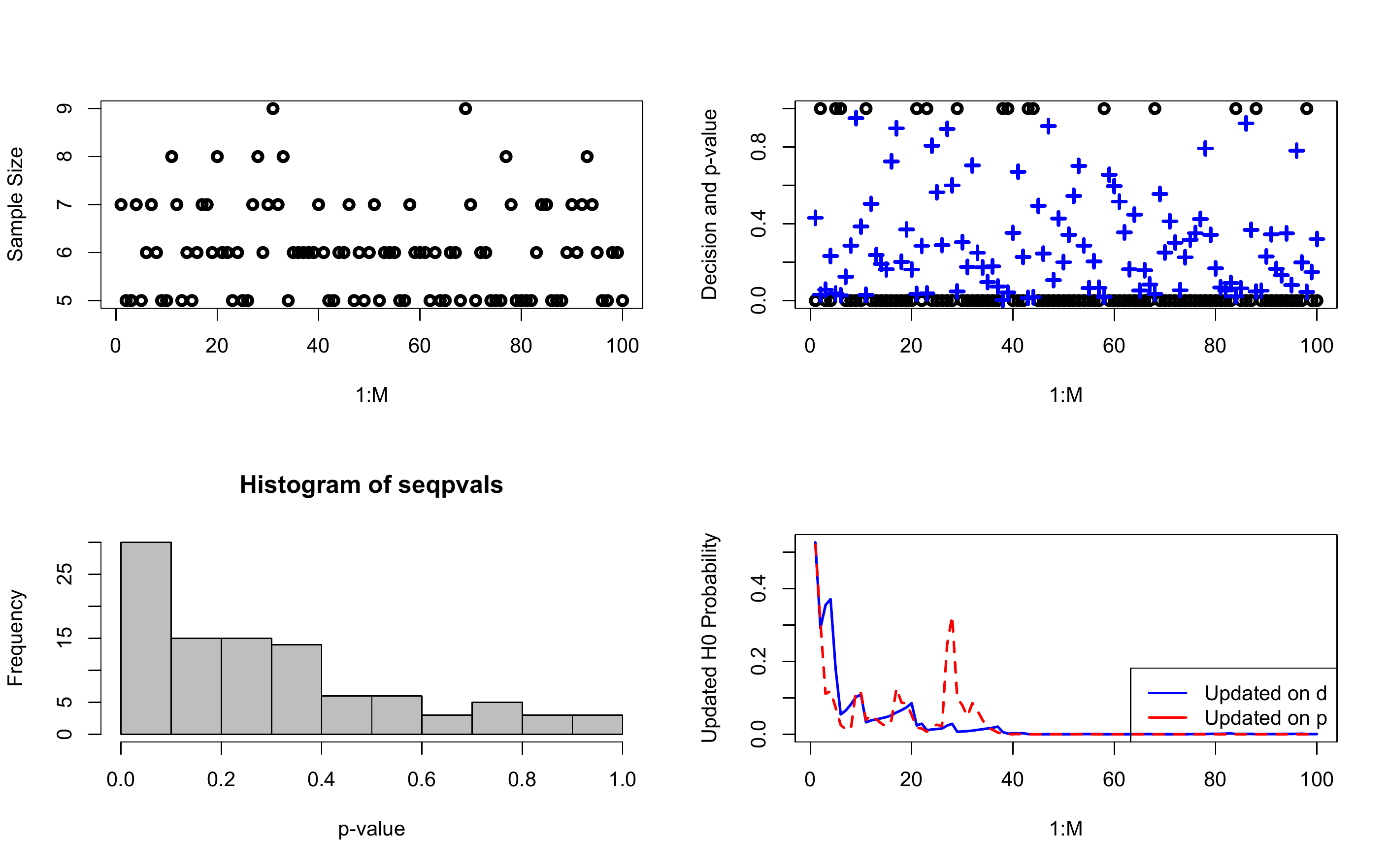}
\end{figure}

One could argue that since we are examining the results of 100 scientists, that we ought to have adjusted for the multiple testing. However, we wanted to mimic the realistic situation where each scientist performs their own study with or without knowledge of other scientists studies, hence chooses their own LoS, which will usually be 0.05 in the existing statistical decision-making paradigm.

\section{Knowledge Updating and Reporting Results}
\label{sec-Knowledge Updating}

So, how do we address the question of assessing the replicability of results? As the NAS report \cite{NASReport2019} alludes to, this should be done `in the context of an entire body of evidence, rather than an individual study or an individual replication'.  In a sense, each scientist's result should be utilized to update the current knowledge that we possess about $H_0$ and $H_1$. As mentioned in Section \ref{sect-introduction}, our knowledge of $H_0$ ($H_1$) could be represented by our subjective probability that $H_0$ ($H_1$) is true. As such upon seeing an individual scientist's result, be it the data $x$, the decision $d$, or the observed $p$, it should be used to update the current probability that $H_0$ is true. The updated probability that $H_0$ ($H_1$) is true will then represent the new  knowledge about $H_0$ ($H_1$).

Denote by $\kappa_0$ the prior probability that $H_0$ is true, and by $\kappa_1 = 1 - \kappa_0$ the prior probability that $H_1$ is true. If the observed data $x$ is reported, then by Bayes Theorem we obtain the posterior probabilities of $H_0$ and $H_1$ via:
\begin{eqnarray}
 \kappa_0(x) & = & \frac{\kappa_0 f_0(x)}{\kappa_0 f_0(x) + \kappa_1 f_1(x)} = \frac{1}{1 + (\kappa_1/\kappa_0) \Lambda(x)}; 
\label{update on x} \\
\kappa_1(x) & = & 1 - \kappa_0(x).  \nonumber
\end{eqnarray}

On the other hand, if only $\alpha_0$ and $d = \delta^*(x,u;\alpha_0)$ are reported, then the posterior probabilities are computed via Bayes theorem according to
\begin{eqnarray}
\kappa_0(d,\alpha_0) & = & \frac{\kappa_0 \alpha_0^d (1-\alpha_0)^{1-d}}
{\kappa_0 \alpha_0^d (1-\alpha_0)^{1-d} + \kappa_1 \rho(\alpha_0)^d [1 - \rho(\alpha_0)]^{1-d}} \label{update on d} \\
& = & \frac{1}{1 + (\kappa_1/\kappa_0) \Lambda_D(d;\alpha_0,\rho(\alpha_0))}; \nonumber \\
\kappa_1(d,\alpha_0) & = & 1 - \kappa_0(d,\alpha_0), \nonumber
\end{eqnarray}
where
\begin{equation}
\label{lik ratio based on D}
\Lambda_D(d;\alpha_0) = \left[\frac{\rho(\alpha_0)}{\alpha_0}\right]^d
\left[\frac{1-\rho(\alpha_0)}{1 - \alpha_0}\right]^{1-d}
\end{equation}
is the likelihood ratio based on observing $\delta^*(X,U;\alpha_0)$.
Observe that this updating will depend on the LoS used and the associated power or ROC value at the LoS of the decision rule. This updating rule also demonstrates the quantitative type of information obtained from a decision as a function of the LoS and power. For suppose that the decision is $d = 1$, i.e., reject $H_0$. Observe that as $\alpha_0 \rightarrow 0$, $\kappa_0(1,\alpha_0) \rightarrow \kappa_0/(\kappa_0 + \kappa_1 \rho^\prime(0+))$, and since $\rho^\prime(0+) > 1$ by Proposition \ref{prop-ROC properties}, then this limiting posterior probability is less than $\kappa_0$. That is, a ``reject $H_0$'' result with a small LoS provides evidence that $H_0$ is not true. In the other extreme, when $d = 1$ and $\alpha_0 \rightarrow 1$, $\kappa_0(1,\alpha_0) \rightarrow \kappa_0$, indicating that a ``reject $H_0$'' result, but with a large LoS, is {\em not} informative about whether $H_0$ is true or false. More could be said with respect to the interplay between the LoS $\alpha_0$ and the power $\rho(\alpha_0)$ and the information contained in the decision $d$. From the expression of $\Lambda_D$ in (\ref{lik ratio based on D}), observe that if $d = 1$ and $\rho(\alpha_0)/\alpha_0$ is much larger than 1, then this will contribute to a big change in the updated posterior probability that $H_0$ is true (will decrease this probability); whereas, if $1 < \rho(\alpha_0)/\alpha_0 \approx 1$, then $d = 1$ will not alter much this probability. When $\alpha_0$ is small, then $\rho(\alpha_0)/\alpha_0$ will tend to be large, while when $\alpha_0$ is large, then $\rho(\alpha_0)/\alpha_0$ will tend to be close to 1. 

The opposite holds true when the decision is $d = 0$, that is, ``do not reject $H_0$''. If $\alpha_0 \rightarrow 0$, then $\kappa_0(0,\alpha_0) \rightarrow \kappa_0$; while when $\alpha_0 \rightarrow 1$, then $\kappa_0(0,\alpha_0) \rightarrow \kappa_0/[\kappa_0 + \kappa_1 \rho^\prime(1-)]$, and since $\rho^\prime(1-) < 1$ by Proposition \ref{prop-ROC properties}, this limit value exceeds $\kappa_0$. That is, a ``do not reject $H_0$'' decision with small $\alpha_0$ is not informative about $H_0$; whereas, it is informative about $H_0$ when $\alpha_0$ is large. Again, more could be said from $\Lambda_D$ since when $d = 0$ the relevant factor is $[1 - \rho(\alpha_0)]/[1-\alpha_0]$. When $\alpha_0$ is small, then this ratio is close to 1, so that it will not alter much the prior probability of $H_0$ in the updating; whereas, if $\alpha_0$ is large and power $\rho(\alpha_0)$ at $\alpha_0$ is large, then the ratio becomes small, thus contributing to a big change from prior to posterior probabilities.

These considerations indicate that it is imperative to accompany the decision $d$, if a decision is actually needed to be made, with the value of
$\Lambda_D(d;\alpha_0)$
or, equivalently, by $\log \Lambda_D(d;\alpha_0)$, a deviance associated with $\delta^*(\alpha_0)$, which measures the quality of the information about $H_0$ and $H_1$ provided by the realization of the decision function. This shows that specific thresholds on the LoS, such as the 0.05 threshold, are really not needed since whatever LoS $\alpha_0$ value a scientist uses, provided it is not data-determined, will be automatically factored into the summary measure $\Lambda_D(d;\alpha_0)$ or $\log \Lambda_D(d;\alpha_0)$. Note that the power $\rho(\alpha_0)$ of the decision function at the chosen $\alpha_0$ is a critical component in this summary measure. In a similar vein that an estimate of the standard error of an estimator accompanies the estimate arising from the estimator to serve as a measure of the precision of the estimate, then $\log \Lambda_D(d;\alpha_0)$ could be viewed as a measure of the quality of the realized decision, with a large value of $|\log \Lambda_D(d;\alpha_0)|$ indicating the decision $d$ is of high quality.

If one wants to use the value $p$ of the $P$-functional to update the current knowledge of $H_0$ and $H_1$, then their posterior probabilities are computed via Bayes theorem using
\begin{eqnarray}
\kappa_0(p) & = & \frac{\kappa_0}
{\kappa_0 + \kappa_1 \rho^\prime(p)} = \frac{1}{1 + (\kappa_1/\kappa_0) \rho^\prime(p)}; \label{update on p} \\
\kappa_1(p) & = & 1 - \kappa_0(p).
\end{eqnarray}
Let us examine the information provided by different values of $p$. Recall that $\rho^\prime(\alpha)$ is decreasing in $\alpha$ under $H_1$ with $\rho^\prime(0+) > 1$ and $\rho^\prime(1-) < 0$ by Proposition \ref{prop-ROC properties}, so that there exists an $\alpha^*$ such that $\rho^\prime(\alpha^*) = 1$. Thus, for $p_1 < p_2 < \alpha^*$, we will have that
$\kappa_0(p_1) < \kappa_0(p_2) < \kappa_0,$
indicating that smaller observed $P$-values are more indicative that the $H_0$ is not true. When $\alpha^* < p_1 < p_2$, then
$\kappa_0 < \kappa_0(p_1) < \kappa_0(p_2),$
so in this case, larger values of the $P$-functional are more indicative that $H_0$ is true. These results mathematically depict the information contained in the $P$-functional about $H_0$ and $H_1$, and coincides with our intuition that {\em small $p$-values point more towards $H_1$ being true, whereas large $p$-values are indicative of $H_0$ being true.}  However, note that this conclusion will depend on the $\alpha^*$ satisfying $\rho^\prime(\alpha^*) = 1$, and such an $\alpha^*$ will be dependent on the ROC function of the decision process, which in turn will depend on the effect size associated with the $H_0$ and $H_1$ and usually on the sample size used in the study. 

Based on these considerations, just reporting the value $p$ of the $P$-functional, which is the {\em modus operandi} when used in Fisher's NHST approach and in many statistical hypothesis-testing procedures, apparently is not a good approach and could in fact mislead, since users could make conclusions about the strength of evidence for or against $H_0$ simply based on the magnitude of $p$, e.g., when $p = 0.0001$ that $H_0$ is highly implausible. This fallacy is demonstrated by considering two simple alternative hypotheses, which are in the same direction. For the same observed data $(x,u)$, the $P$-functional value will not change under both alternative hypotheses since it is computed only under $H_0$. But, clearly, the information content about $H_0$ in the realized $P$-functional differs under the two alternative hypotheses. More ominously, one would then think that a small $p$ will indicate more support for the more extreme alternative hypothesis, but this turns out to be not the case in general. Thus, there is something missing when we only report the realized value $p$, as is done in NHST and in many statistical hypothesis testing situations.  

So, what might be a better approach? We propose to report as summary measure $\rho^\prime(p)$ or $\log \rho^\prime(p)$, with a small [large] value indicating more [less] support towards $H_0$. We point out that this is different from Greenland's \cite{Gre19} proposal of reporting the $S$-value, the negative of the logarithm of the $p$-value itself, which will still not reflect anything about the alternative hypothesis. An advocate of the NHST approach might argue that this cannot be implemented since there is no alternative hypothesis under the NHST approach since it only asks if the data is consistent with the null model. But, without an alternative model the question of when an observation is inconsistent with the null model becomes problematic. For instance, suppose that the null model is that $X$ has a standard uniform distribution. One might conclude that an outcome $x$ in $[0,.05]$ is an extreme realization under the model, but what about an outcome in $x$ in $[.475,.525]$?  Both events have the same probability of $.05$ under the null model! The Neyman-Pearson framework, which could be viewed as an alteration of the NHST framework, improves on the NHST approach since it insists on taking into account an alternative model, thereby eliminating the indeterminacy of what will be considered as more extreme data realizations under the null model in light of the alternative model.
In addition, we point out that it is possible, for example, to have $p = .0001$ -- which would ordinarily be interpreted as strongly supporting $H_1$ -- but at the same time to have $\rho^\prime(.0001) << 1$, which is indicative of more support for $H_0$. The summary measure $\rho^\prime(p)$, or equivalently $\log \rho^\prime(p)$, is the $P$-based likelihood ratio, so it accounts for the distributions of $P$ under {\em both} $H_0$ and $H_1$; whereas, the current practice of reporting $p$ in isolation, provides a rudderless summary measure since it does {\em not} have the proper context to assess its informativeness towards $H_0$ or $H_1$. Any value of $p$ with $\rho^\prime(p) = 1$, or, equivalently $\log \rho^\prime(p) = 0$, is totally uninformative about $H_0$ and $H_1$, and such a $p$ could possibly be close to zero, close to one, or somewhere in the middle portion of $[0,1]$. Now, if one wants to make a decision between $H_0$ and $H_1$ using the value $p$, then he must specify an LoS $\alpha_0$, determined independently of the observed data, and decide $d = 0 [1]$ if $p  > [\le] \alpha_0$, but he must then accompany this with $\Lambda_D(d;\alpha_0)$ or $\log \Lambda_D(d;\alpha_0)$ as indicated earlier regarding the reporting of the result of a decision function. It is also worth noting that the value of $\rho^\prime(p)$ plays a major role in the optimal choice of LoSs to use in a generalized Benjamini-Hochberg \cite{BenHoc95} false discovery rate (FDR) controlling procedure to optimize the global power (see Theorem 4.3 in \cite{PenHabWu11}) in a multiple testing setting, thus lending credence that the value of $\rho^\prime(p)$ is the more important quantity instead of just the value of $p$ itself. We note in passing that many multiple-testing procedures, including the BH procedure \cite{BenHoc95}, that corrects for multiplicity are just based on the values of the $P$-functional for each of the multiple tests, thereby creating a possible opening for improving such procedures by considering instead the values of $\rho^\prime(p)$ for each of the multiple tests; see also \cite{PenHabWu15}.

In order to implement the updating rules based on observing $\delta(x,u;\alpha_0) = d$ and $P(x,u) = p$, the quantities $\rho(\alpha_0)$ and $\rho^\prime(p)$ need to be known. In principle, knowledge of $f_0$ and $f_1$, which are needed for the updating based on observing $X = x$, is sufficient to determine $\rho(\alpha)$ and hence $\rho^\prime(\alpha)$. Since our main focus in this paper is the simple $H_0$ versus the simple $H_1$ setting, these functions will be completely known.

However, let us briefly consider a situation with a composite $H_1$. Thus, suppose that $X$ has pdf $f$ with respect to a dominating measure $\nu$ and which belongs to a family of pdfs $\mathfrak{F} = \{f(x;\theta): \theta \in \Theta \subset \Re\}$ satisfying a monotone likelihood ratio (MLR) property in the one-dimensional statistic $S(x)$ (see, for instance, \cite{LehRom05} for discussions of the MLR property), and of interest is to decide between the simple null hypothesis $H_0: \theta = \theta_0$ and the composite alternative hypothesis $H_1: \theta > \theta_0$. There will be a decision process $\Delta = \{\delta(x,u;\alpha): \alpha \in (0,1)\}$ for $H_0$ versus $H_1$ of form
\begin{displaymath}
\delta(x,u;\alpha) = I\{S(x) > c(\alpha)\} + I\{S(x) = c(\alpha); u \le \gamma(\alpha)\},
\end{displaymath}
where $c(\alpha)$ and $\gamma(\alpha)$ only depends on the distribution of $S(X)$ under $H_0$. For this decision process there will be an associated ROC function which will depend on both $\alpha$ and $\theta_1$, where $\theta_1$ is the true value of $\theta$:
$\rho(\theta_0,\theta_1) = \{\rho(\alpha;\theta_0,\theta_1): \alpha \in (0,1)\}$, with
$$\rho(\alpha;\theta_0,\theta_1) = \langle \delta(\alpha), \Lambda(\theta_0,\theta_1) \rangle$$
where $$\Lambda(\theta_0,\theta_1) = \Lambda(x;\theta_0,\theta_1) = f(x;\theta_1)/f(x;\theta_0) = g[S(x); \theta_0,\theta_1]$$ for some $g(s;\theta_0,\theta_1)$ which is non-decreasing in $s$ whenever $\theta_1 > \theta_0$, and with the inner product defined via 
$$\langle g_1, g_2 \rangle = \int_0^1 \int_\mathfrak{X} g_1(x,u)g_2(x,u) f(x;\theta_0) \nu(dx) du.$$
Given the realized decision $d$ based on decision function $\delta(\cdot,\cdot;\alpha)$ and realized $p$ of the $P$-functional, we let
\begin{eqnarray*}
l_D(\theta_1;\theta_0,d,\alpha) & = &
d \log\left[\frac{\rho(\alpha;\theta_0,\theta_1)}{\alpha}\right] + (1 - d) \log\left[\frac{1-\rho(\alpha;\theta_0,\theta_1)}{1-\alpha}\right]; \\
l_P(\theta_1;\theta_0,p) & = & \log \rho^\prime(p;\theta_0,\theta_1),
\end{eqnarray*}
denote the log-likelihood {\em ratios} based on $D=d$ and $P=p$, respectively.
The dependence of each of these functions on $(\theta_0,\theta_1)$ will usually be through a one-dimensional parametric function $\xi(\theta_0,\theta_1)$, which represents the `effect size' or the `distance' between $\theta_0$ and $\theta_1$. As summary plots to accompany the decision $d$ or the realized $P$-functional $p$, we could provide plots of $(\xi(\theta_0,\theta_1),l_D(\theta_1,;\theta_0,\alpha,d))$ or $(\xi(\theta_0,\theta_1),l_P(\theta_1;\theta_0,p))$ as $\theta_1$ varies from $\theta_0$. Such plots could provide information about the quality of the information about $H_0: \theta = \theta_0$ versus the possible values under $H_1$.  Values of these functions that are close to zero will not be informative, whereas large values will indicate support of $H_1$, while small values will indicate support for $H_0$.

We illustrate the above ideas using the two-sample setting utilized in the simulation study in Section \ref{sec-Replicability}. For a given $\mu_0$, $\mu_1$, and $\sigma$, the {\em standardized effect size} is defined as $\xi = (\mu_1 - \mu_0)/\sigma$. Given an $n$ and $\alpha$, together with the value of $\xi$, we obtain an expression for $l_D(\xi;d,\alpha)$ and also $l_P(\xi;p)$. We could plot in 3-dimensional space the mappings $(\xi,\alpha) \mapsto l_D(\xi;d,\alpha)$ and $(\xi,p) \mapsto l_P(\xi;p)$, or we could also just create contour plots of these mappings. For aesthetic purposes, it is better to plot with respect to $(\xi,\log(\alpha/(1-\alpha))$ the $l_D(\xi;d,\alpha)$ and  $(\xi,\log(p/(1-p))$ the $l_P(\xi;p)$. Figure \ref{fig-contour plots} provides these contour plots for $n=10$ and $n=20$.
%
%
The contour plots associated with $l_D(\xi,\alpha;d)$ provide information regarding the quality of information about $H_0$ and $H_1$ that could be obtained from observing either a decision of $d = 0$ (do not reject $H_0$) or $d = 1$ (reject $H_0$) for pairs of values of $(\xi,\log(\alpha/(1-\alpha))$; whereas, the contour plot associated with $l_P(\xi,p)$ provides information about the quality of information regarding $H_0$ and $H_1$ that one obtains for $(\xi,\log(p/(1-p))$. Just for reference, note that $\log(.05/(1-.05)) = -2.9444$.

\begin{figure}
\caption{Contour plots of $l_D(\xi;d,\alpha)$ and $l_P(\xi;p)$ with respect to the effect size $\xi$ and $\log(\alpha/(1-\alpha))$ or $\log(p/(1-p))$ for $n = 10$ (first column) and $n = 20$ (second column) in the two-sample problem.}
\label{fig-contour plots}
\begin{center}
\begin{tabular}{|c|c|} \hline
Sample Size $n = 10$ & Sample Size $n = 20$ \\ \hline
\includegraphics[width=2.75in,height=2.1in]{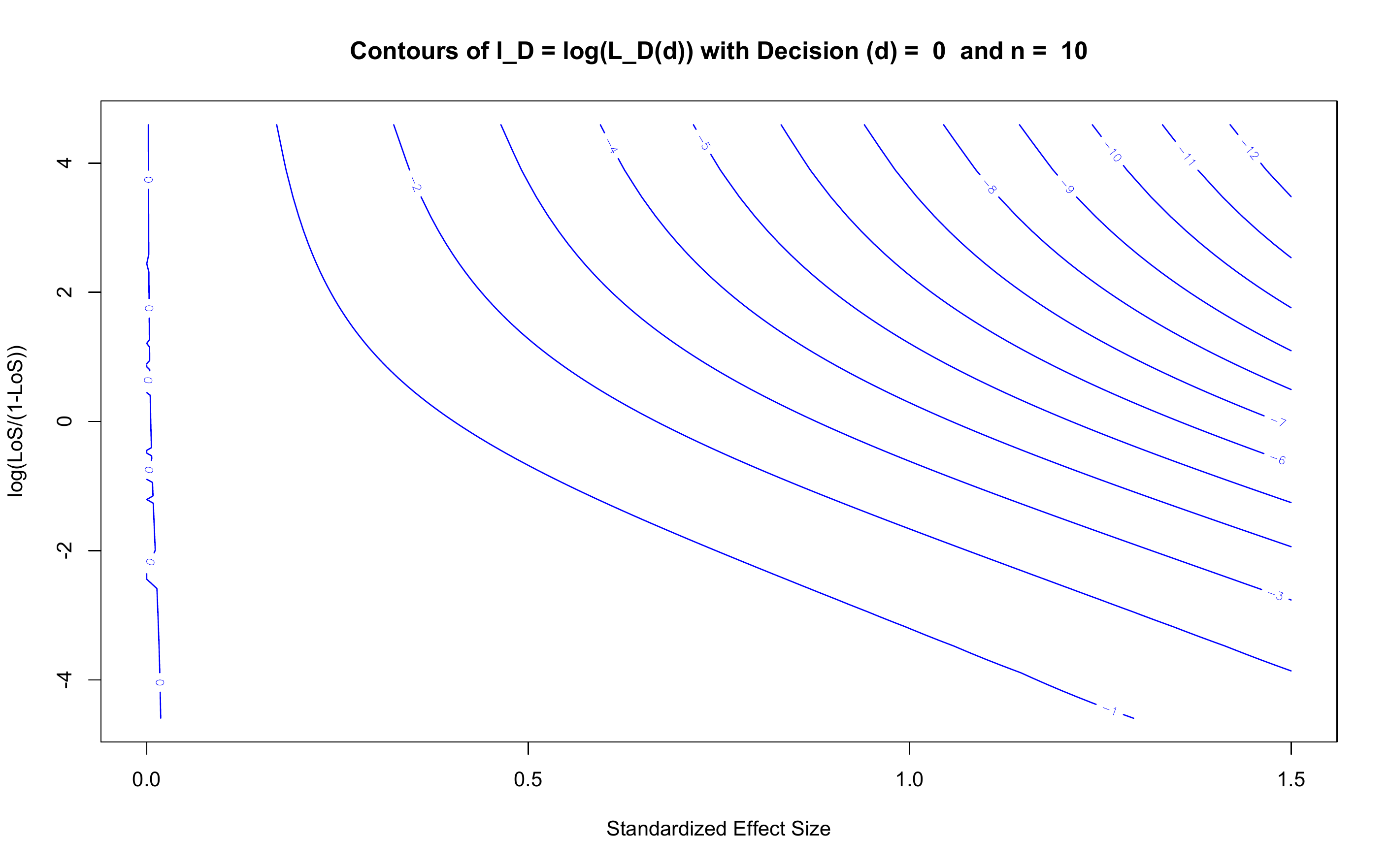} & 
\includegraphics[width=2.75in,height=2.1in]{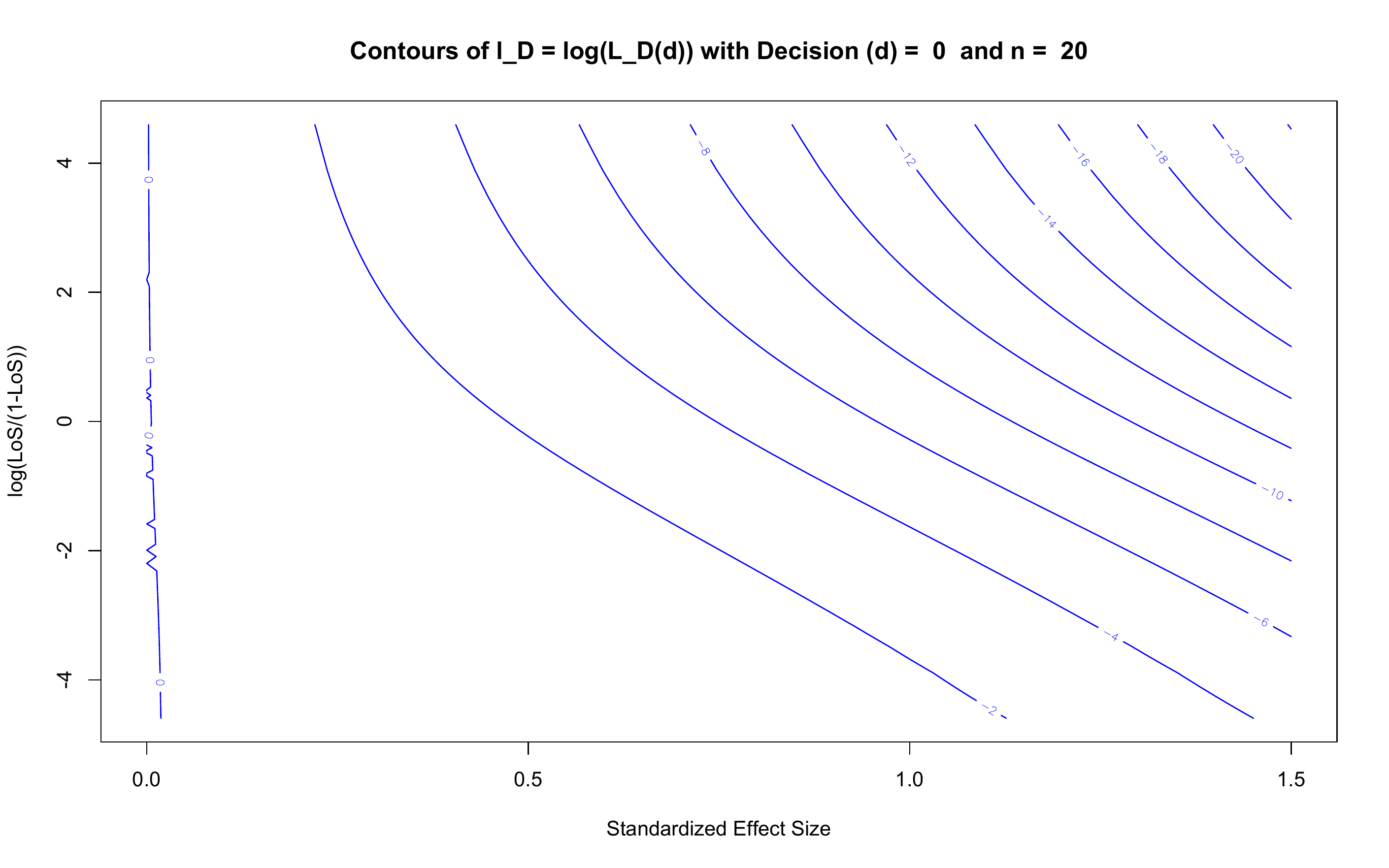} \\
\includegraphics[width=2.75in,height=2.1in]{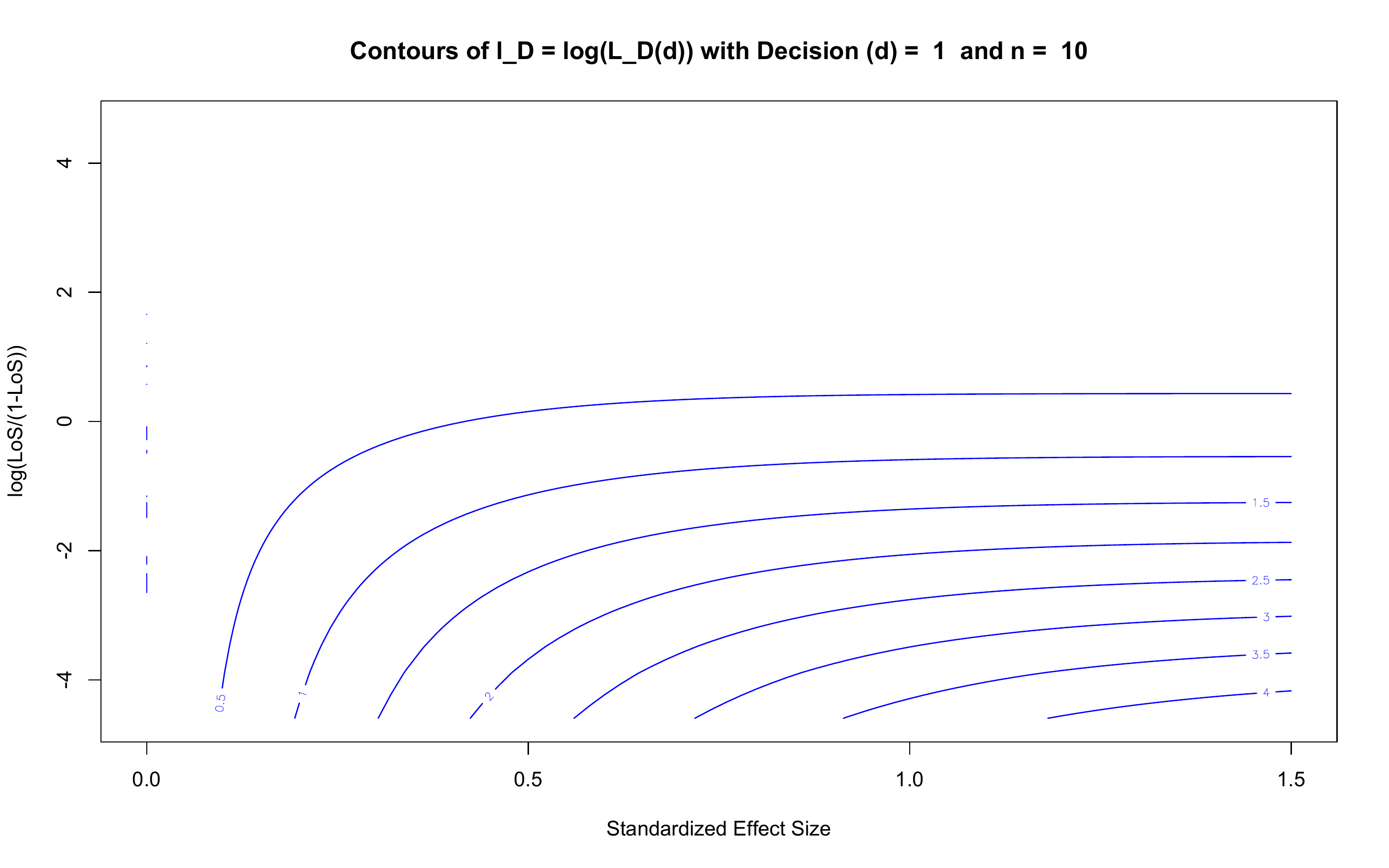} &
\includegraphics[width=2.75in,height=2.1in]{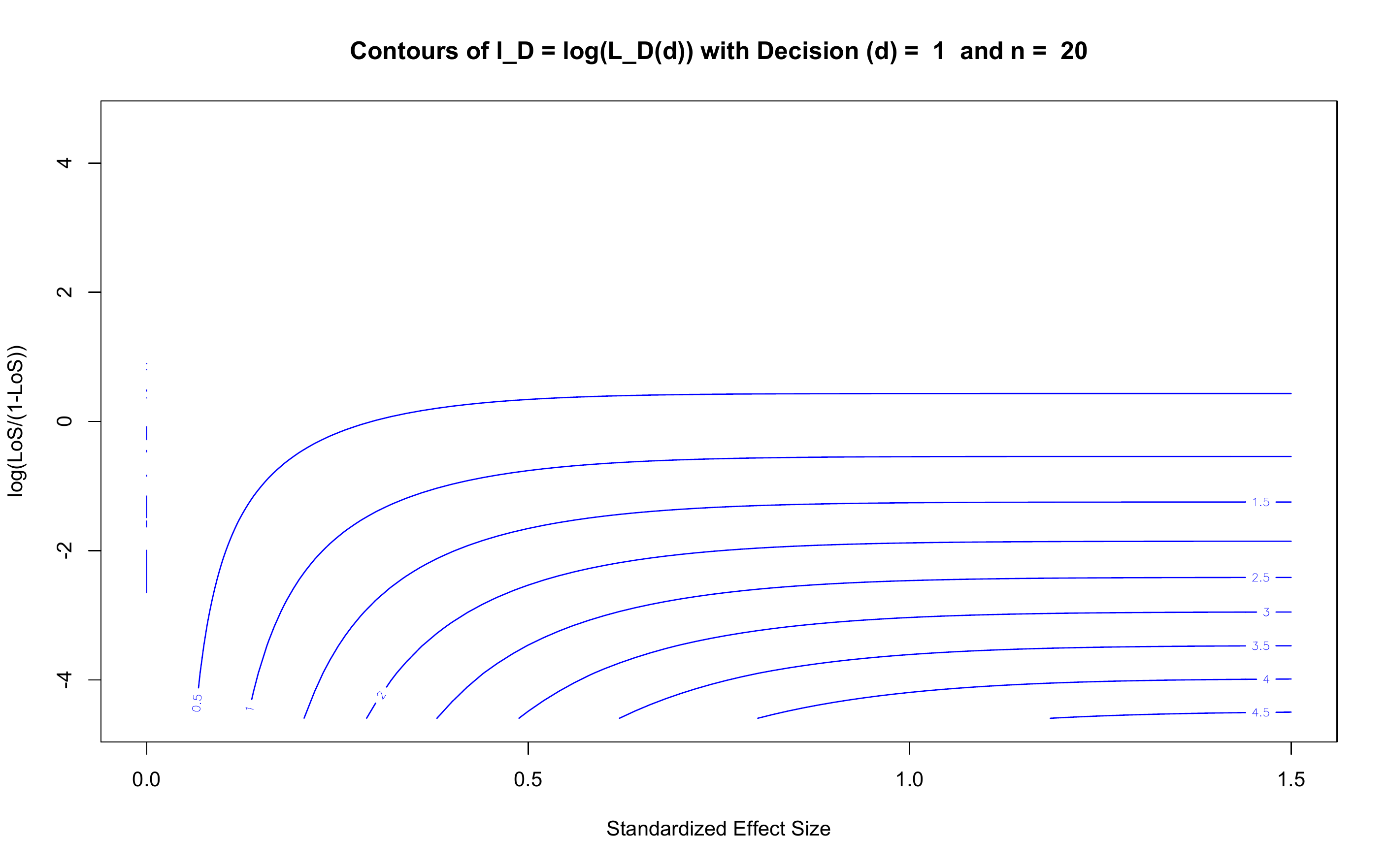} \\
\includegraphics[width=2.75in,height=2.1in]{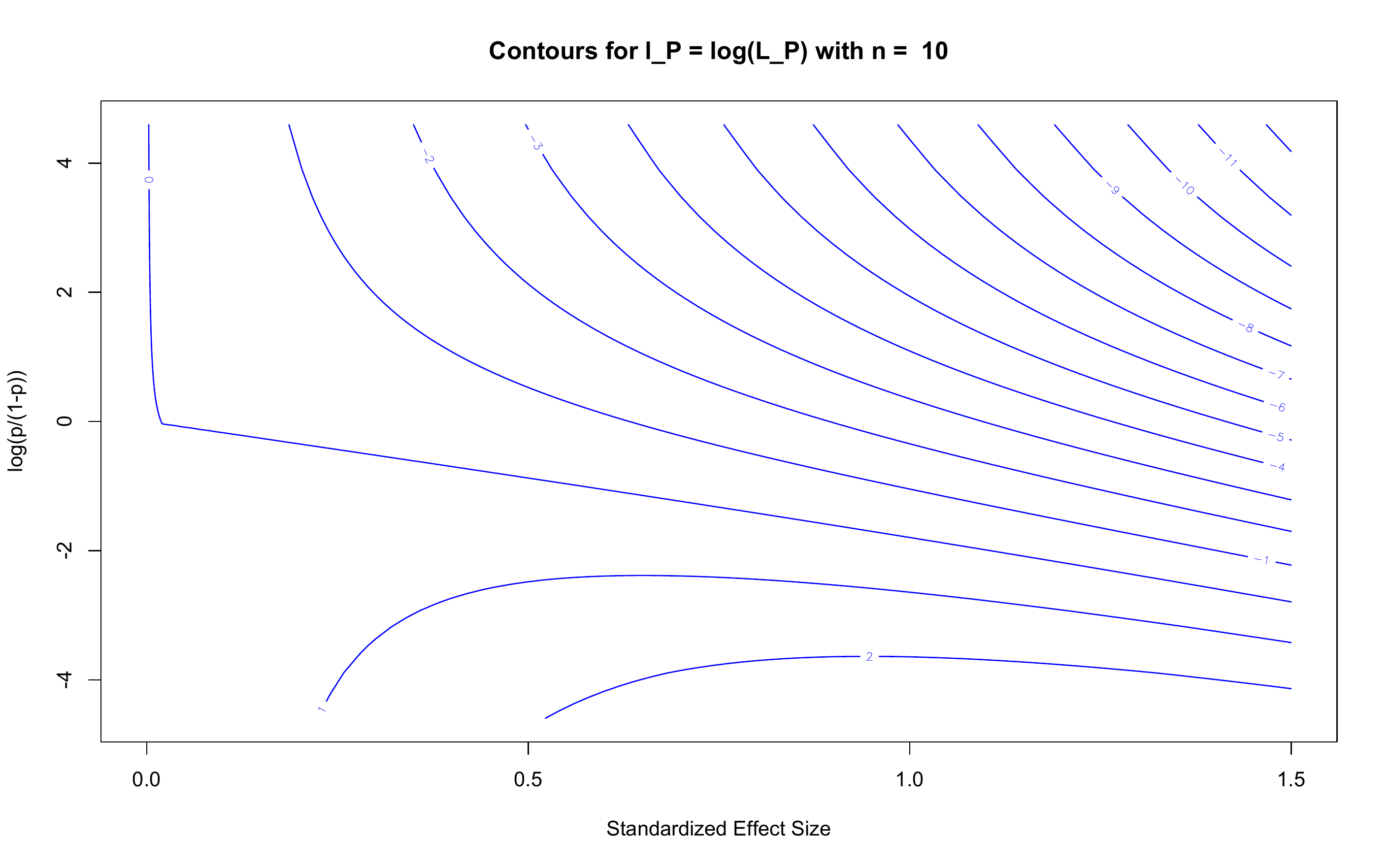} &
\includegraphics[width=2.75in,height=2.1in]{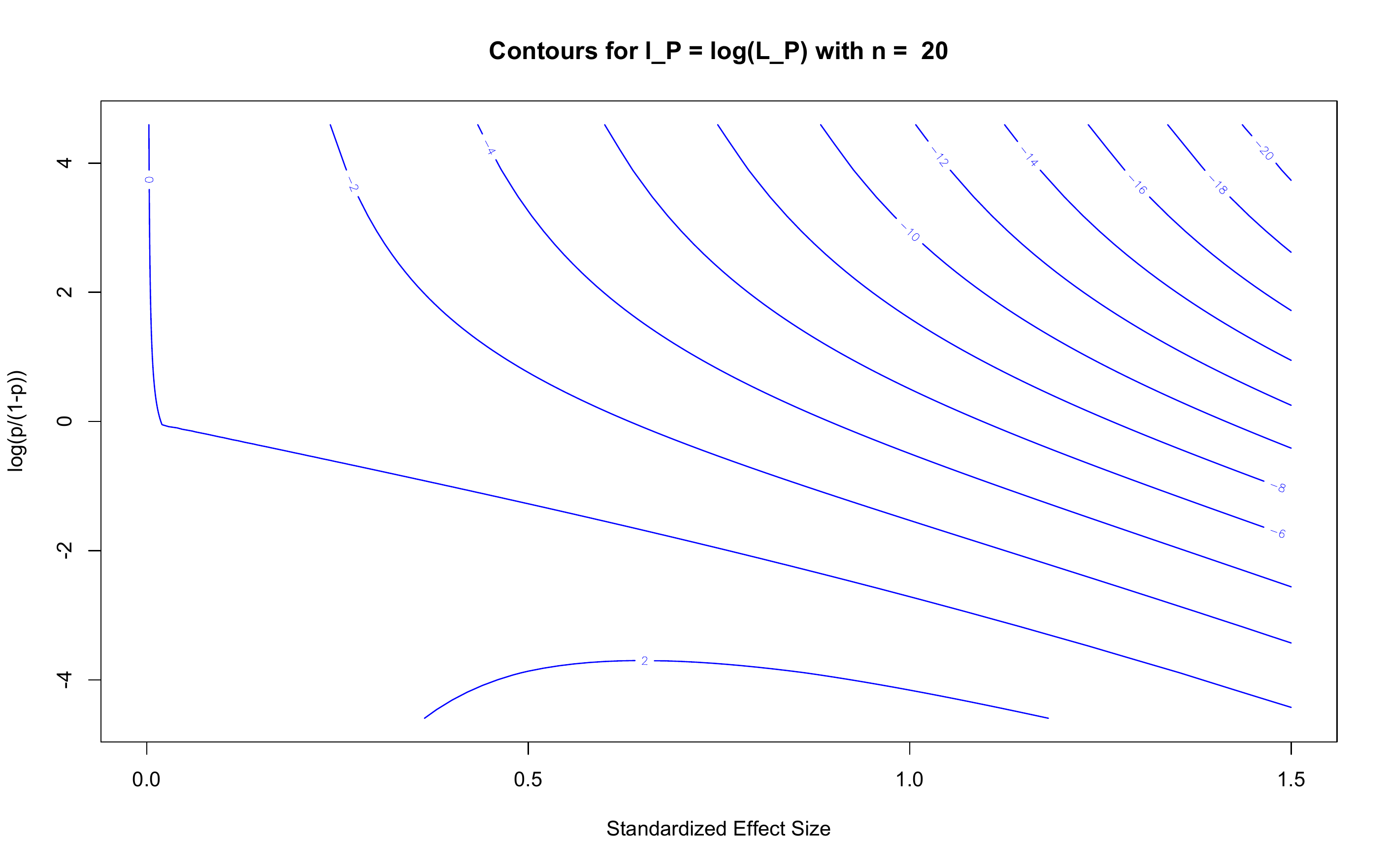} \\ \hline
\end{tabular}
\end{center}
\end{figure}

Next, to illustrate what could transpire in practice, we generated two data sets each under the null hypothesis model ($H_0: \mu_0 = 0, \mu_1 = 0$) and under the specific alternative hypothesis model ($H_1: \mu_0 = 0, \mu_1 = 5$) with $\sigma = 5$ with $n = 10$ and $n = 20$. Based on the generated two-sample data sets, we determined the realized decision, $d$, under an LoS of $\alpha = .05$, and the value of $p$. We provide the comparative boxplots of the two samples, together with the plot of $\xi \mapsto l_D(\xi;d,\alpha)$ and $\xi \mapsto l_P(\xi;p)$ for different values of $\xi$. The realized $d$ and $p$ are indicated at the top of the second and third plot panels in Figures \ref{fig-all realizations n = 10} and \ref{fig-all realizations n = 20}.

\begin{figure}
\caption{Sample realizations (two each) from the two-sample model under $H_0$ and $H_1$ together with the plots of the log-likelihood ratio summaries as a function of effect size with sample size of $n = 10$.}
\label{fig-all realizations n = 10}
\begin{tabular}{c} \hline\hline
Data Generating Model Under $H_0$: $\mu_0 =0, \mu_1 = 0, n=10, \sigma = 5$ \\ \hline
Sample Realization \#1 \\
\includegraphics[width=\textwidth,height=1.25in]{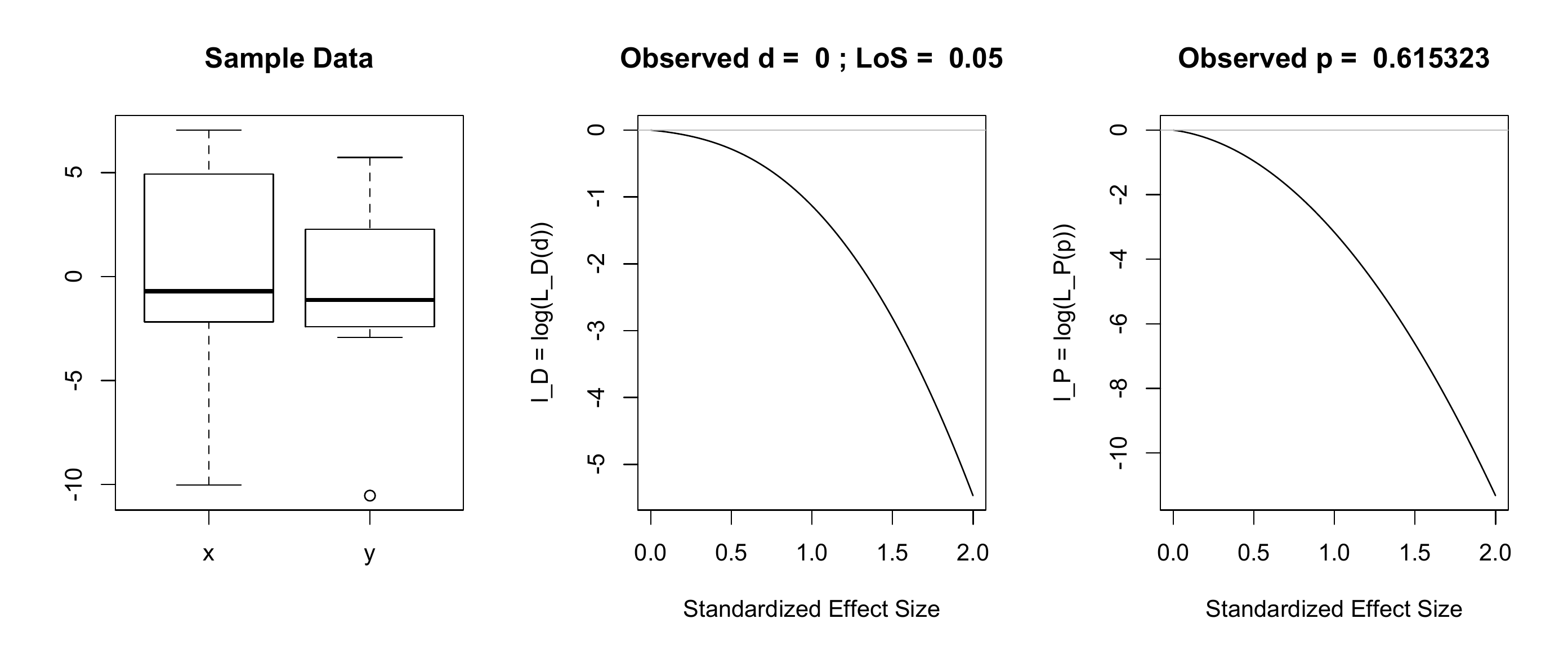} \\ \hline \\
Sample Realization \#2 \\
\includegraphics[width=\textwidth,height=1.25in]{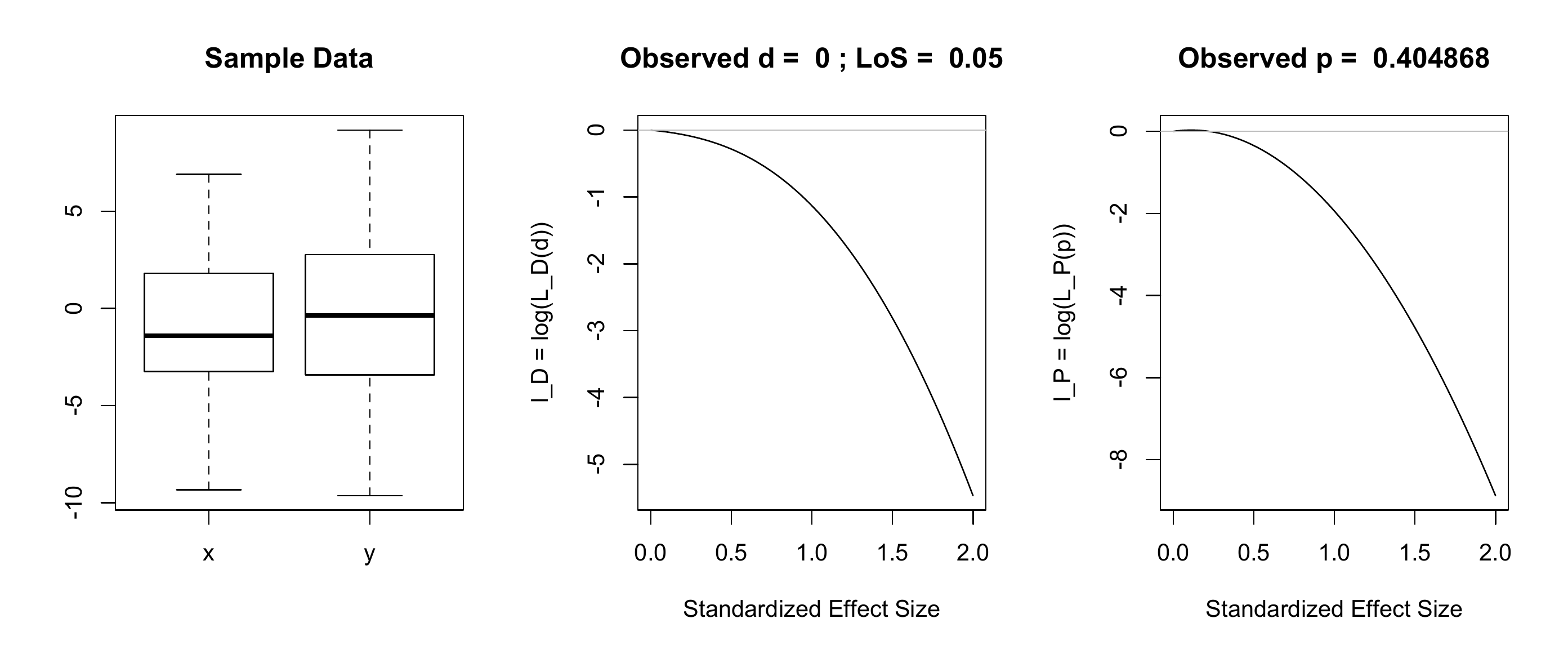}  \\ \hline\hline
Data Generating Model Under $H_1$: $\mu_0 =0, \mu_1 = 5, n=10, \sigma = 5$ \\ \hline
Sample Realization \#1 \\
\includegraphics[width=\textwidth,height=1.25in]{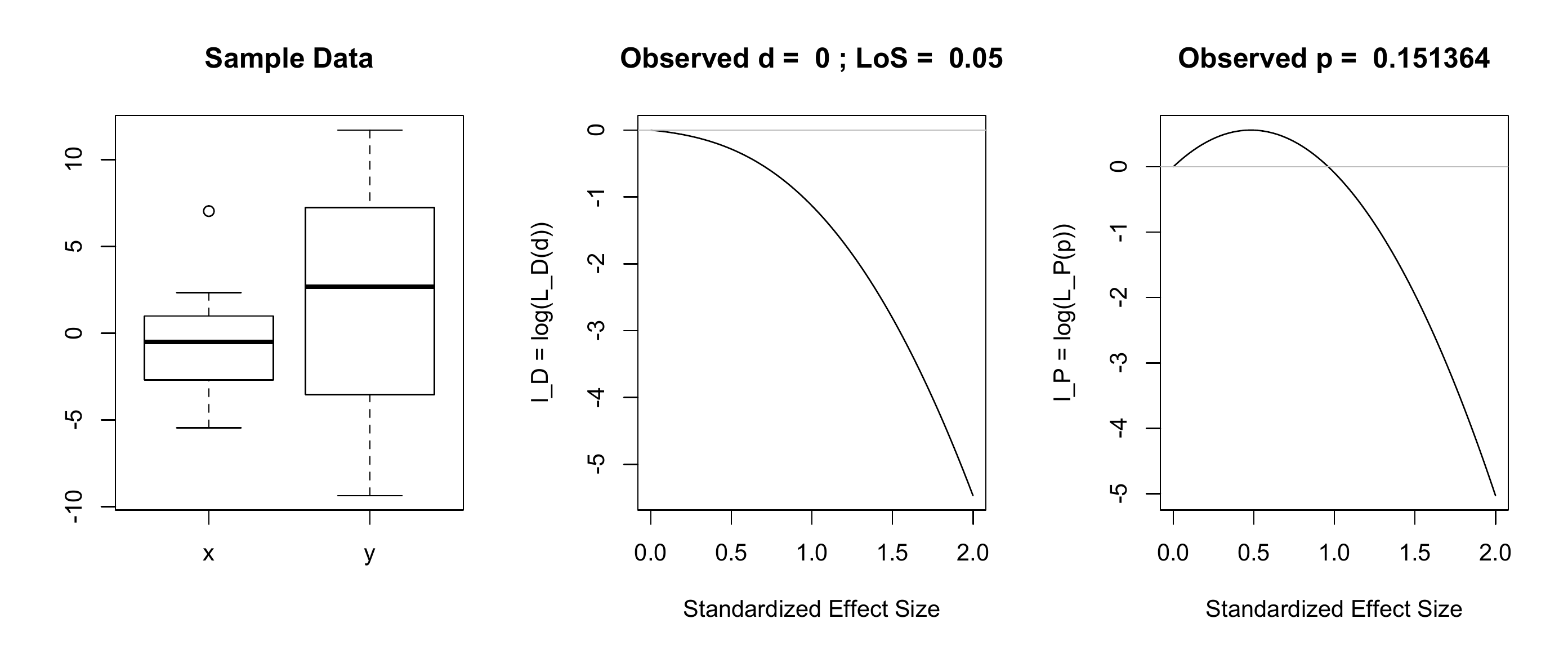} \\ \hline \\
Sample Realization \#2 \\
\includegraphics[width=\textwidth,height=1.25in]{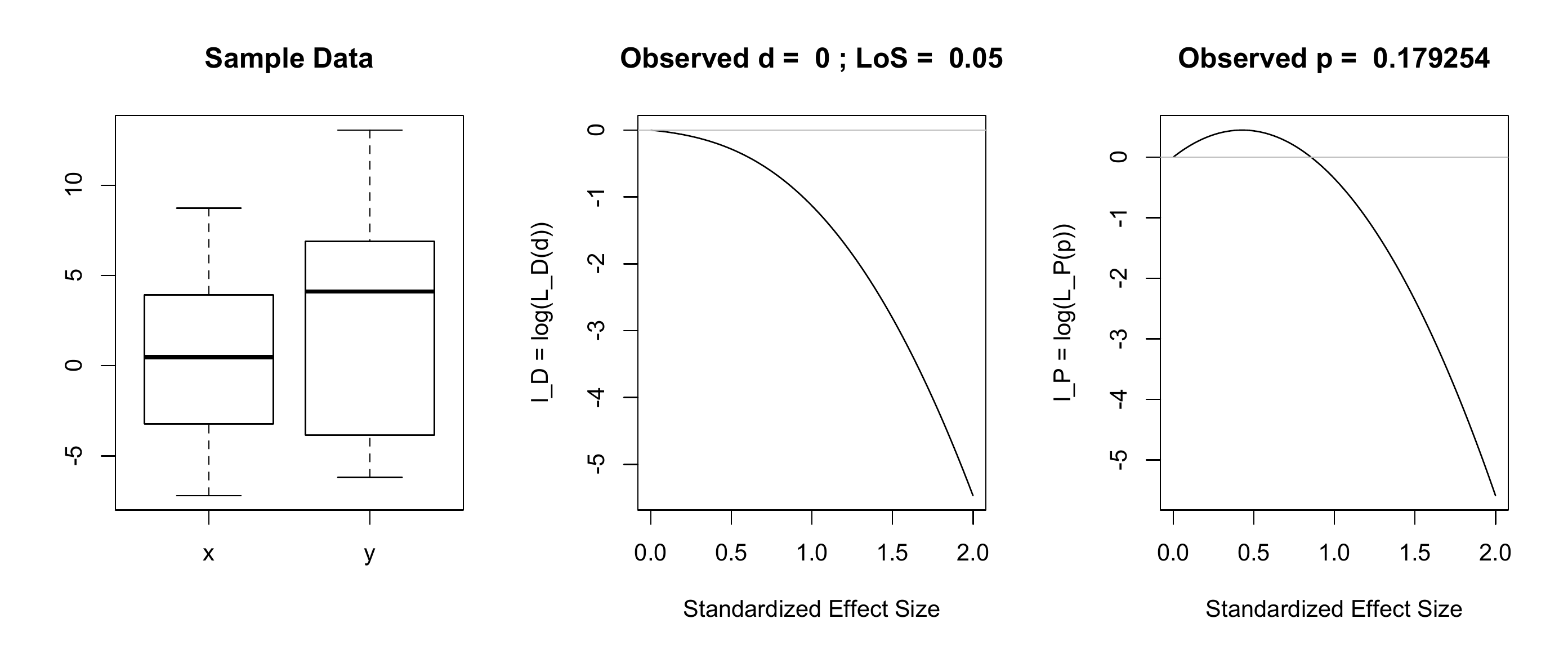}  \\ \hline\hline
\end{tabular}
\end{figure}

\begin{figure}
\caption{Sample realizations (two each) from the two-sample model under $H_0$ and $H_1$ together with the plots of the log-likelihood ratio summaries as a function of effect size with sample size of $n = 20$.}
\label{fig-all realizations n = 20}
\begin{tabular}{c} \hline\hline
Data Generating Model Under $H_0$: $\mu_0 =0, \mu_1 = 0, n=20, \sigma = 5$ \\  \hline
Sample Realization \#1 \\
\includegraphics[width=\textwidth,height=1.25in]{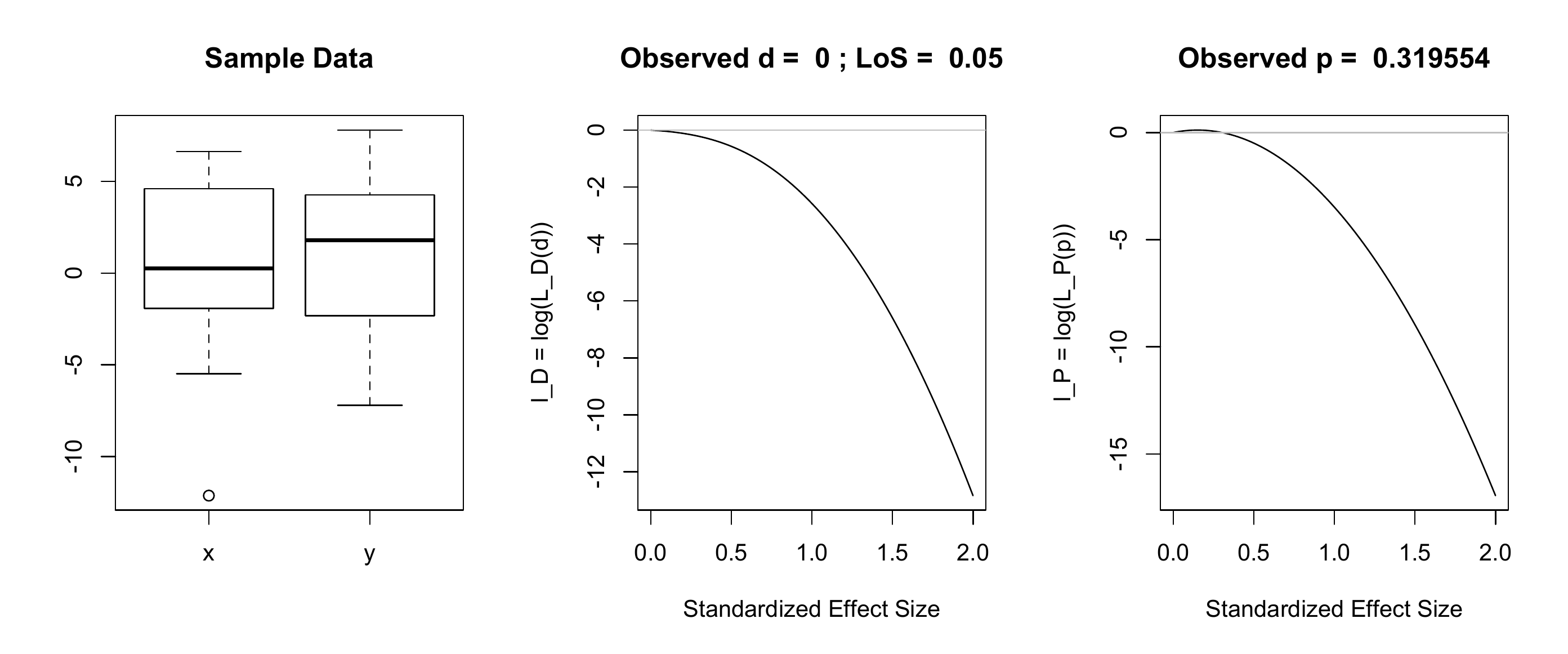} \\ \hline \\
Sample Realization \#2 \\
\includegraphics[width=\textwidth,height=1.25in]{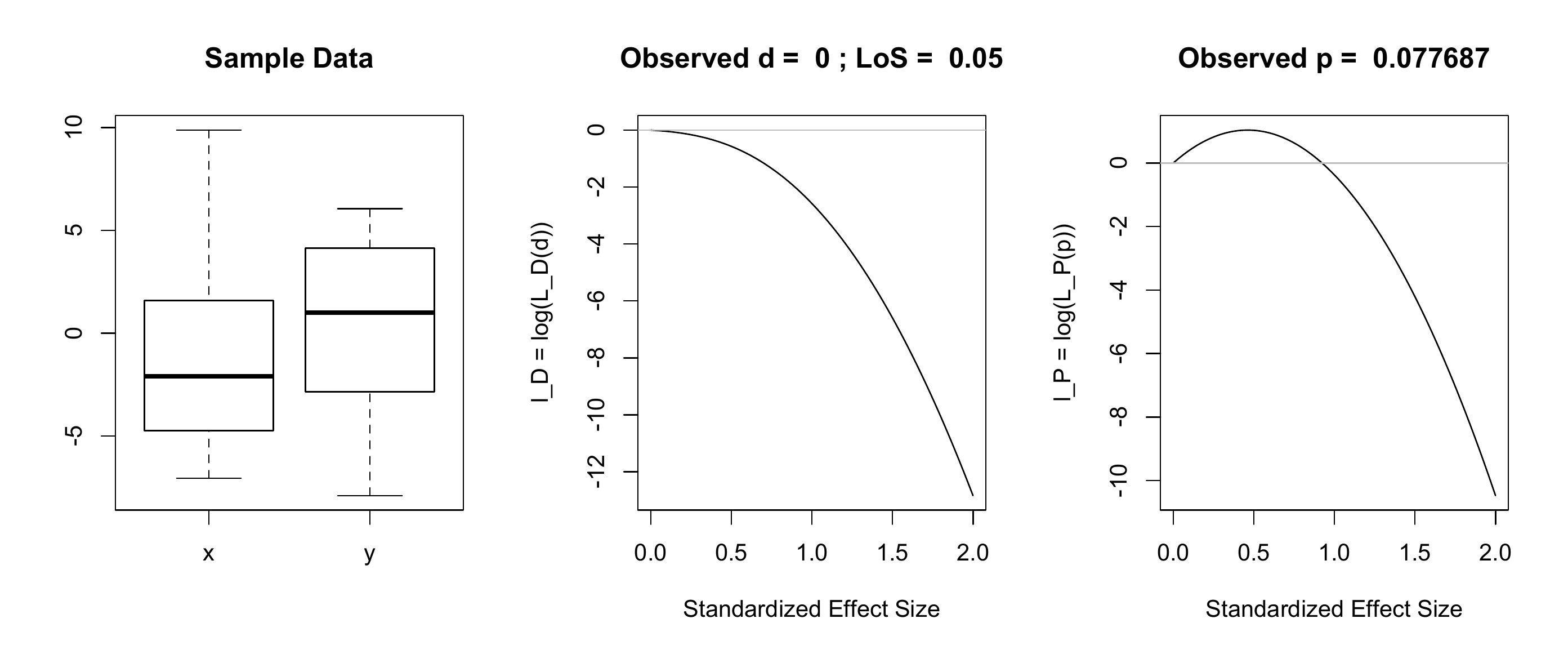}  \\ \hline\hline
Data Generating Model Under $H_1$: $\mu_0 =0, \mu_1 = 5, n=20, \sigma = 5$ \\ \hline
Sample Realization \#1 \\
\includegraphics[width=\textwidth,height=1.25in]{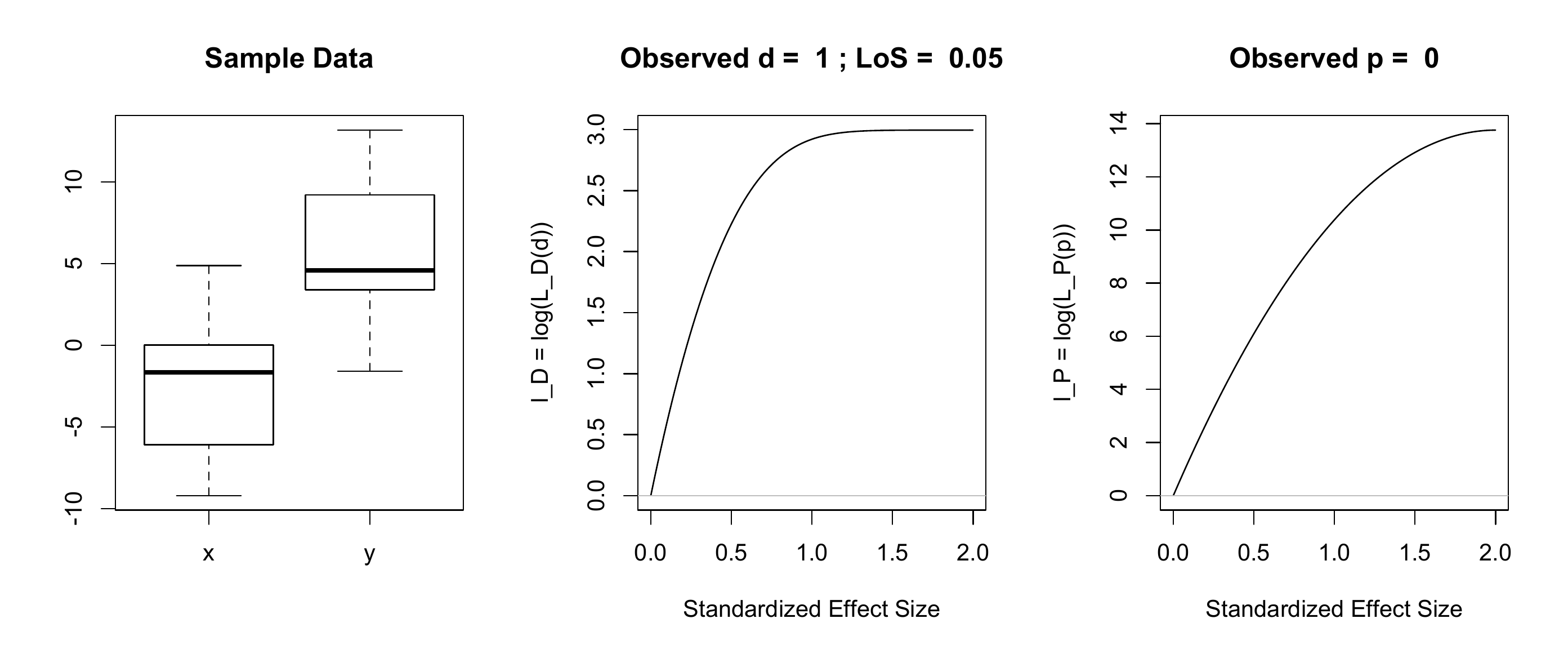} \\ \hline \\
Sample Realization \#2 \\
\includegraphics[width=\textwidth,height=1.25in]{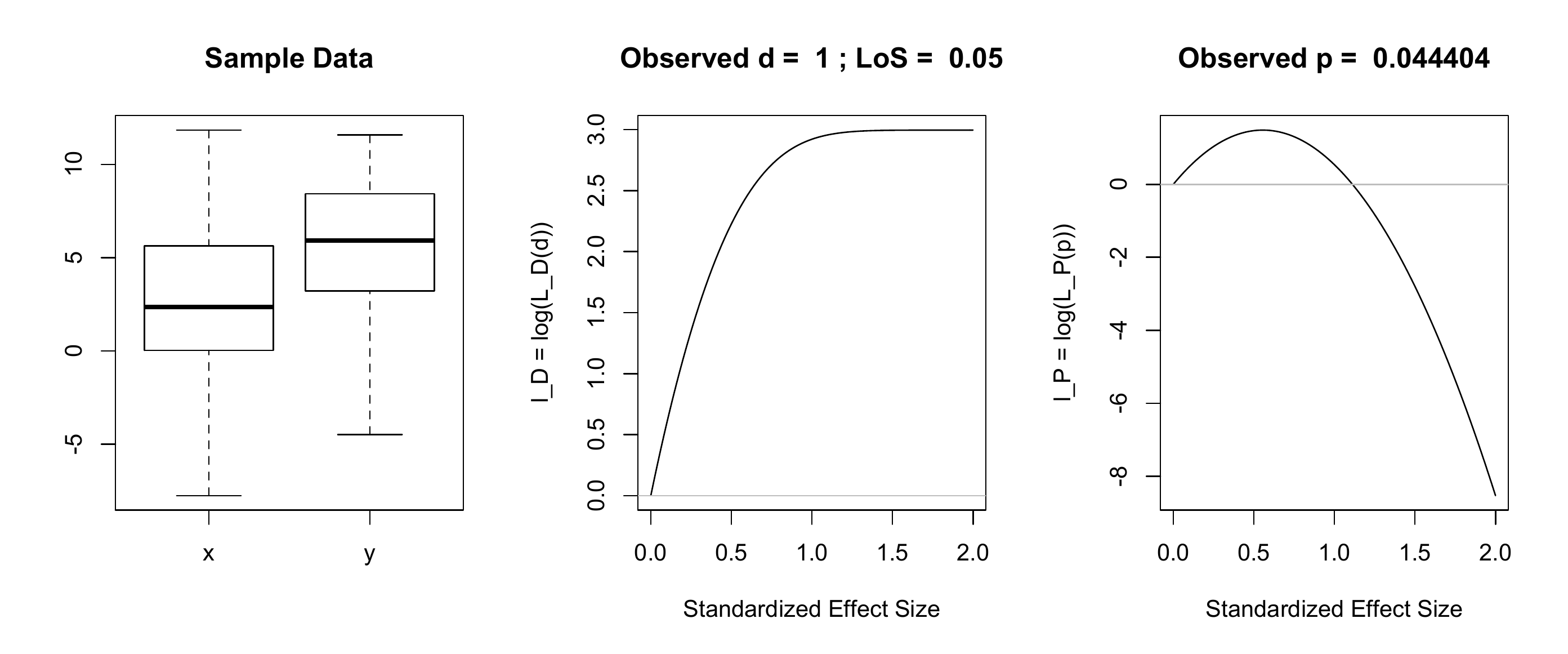}  \\ \hline\hline
\end{tabular}
\end{figure}

In Figure \ref{fig-all realizations n = 10}, both sample realizations with $n=10$, which were generated under the null model, led to $d = 0$ at $\alpha = .05$ and the realized $P$ were $.6153$ and $.4048$. Looking at the plots of $\xi \mapsto l_D(\xi;d,\alpha)$ and $\xi \mapsto l_P(\xi;p)$, these all point to support for $H_0$, with the strength of support increasing as $\xi$ increases. Also in Figure \ref{fig-all realizations n = 10} where the $y$-sample was generated from a normal with mean 5 still with $n=10$, so under $H_1$, the two sample realizations still yielded $d = 0$ at $\alpha = .05$, which are both Type II errors. The plots of $\xi \mapsto l_D(\xi;d=0,\alpha)$ were below zero, indicating support for $H_0$. Note by the way that the dependence on the observed data of $l_D(\xi;d,\alpha)$ is only through the value of $d$, so since $d = 0$ for the four sample realizations in Figures \ref{fig-all realizations n = 10}, the plots of $\xi \mapsto l_D(\xi;d,\alpha)$ were all identical. The realized $P$ were .1513 and .1792, with corresponding plots of $\xi \mapsto l_P(\xi;p)$ being above zero for a small range of values of $\xi$, lending very mild support for $H_1$, but this support for $H_1$ disappears and reverses to support for $H_0$ when $\xi$ is increased. The intuition here is that when $\xi$ is large, it is expected that the values of $P$ should be much smaller than the values observed, but since they were not, then $H_0$ became more plausible than $H_1$ for the observed $p$. As such the information contained in the realization of $P$ is dependent on the alternative value under consideration and the direction of support it provides, whether towards $H_0$ or $H_1$, could even change with the same data.  Figure \ref{fig-all realizations n = 20}, with data generated under the null model with $n=20$, led to $d=0$ for both sample realizations, though for the second  realization, $p = .07$, which converted to a mild support for $H_1$ when the effect size is small, and support for $H_0$ when the effect size is large. In Figure \ref{fig-all realizations n = 20}, with data generated under the specific alternative model with $n=20$, both sample realizations led to $d = 1$, which are correct decisions; while the realized $P$ were about $\approx 0$ and .04. When $p \approx 0$, note that the plot of $p \mapsto l_P(\xi;p)$ is increasing over the range plotted, while when $p = .04$, it increases first, then starts to decrease and goes below zero, lending support for $H_0$. In these plots, we observe that the plot of $p \mapsto l_P(\xi;p)$ either decreases immediately, or it increases first then decreases. This is a manifestation of the mathematical result pointed out earlier that, even when $p$ is quite small, it could still lead to supporting $H_0$ if the effect size under consideration is quite large. This appears to be a fatal flaw of $P$ when its exact value is used to infer about the strength of support for either $H_0$ or $H_1$. This defect, however, is circumvented if one utilizes the value of $\rho^\prime(p)$ or $log(\rho^\prime(p)) = l_P(p)$ to deduce the strength of support for $H_0$ or $H_1$, or if the value $p$ is used in the MP decision function where an LoS $\alpha$ is specified and $H_0$ is rejected whenever $p \le \alpha$. In the latter case, whatever decision is made should be accompanied by a plot or table of values of $(\xi,l_D(\xi;d,\alpha))$. 

Next, we present an application to the famous R.\ A.\ Fisher's lady tea-tasting experiment, which ushered the era of NHST \cite{Fis71,Lyn19}. We discuss two versions of this tea-tasting experiment.

\medskip

{\bf Version 1:} There are eight cups on which tea has been mixed with milk. On each of these cups, the order in which the milk and the tea were placed is unknown to a lady -- a lady who claims that she is able to determine, with higher probability than just guessing, the order in which tea and milk were placed. Denote by $\theta$ the probability that the lady is able to correctly determine the order in which the tea and the milk were placed. The hypothesis testing problem is to test $H_0: \theta = .5$ versus $H_1: \theta > .5$, though from the NHST framework there is just the null hypothesis $H_0$ but no $H_1$.  Let $S$ denote the number of correct identifications by the lady. Then, under an independence assumption, $S$ has a binomial distribution with parameters $n=8$ and $\theta$, that is, the probability mass function of $S$ is
$$p_S(s|\theta) = \Pr\{S = s| \theta\} = {8 \choose s} \theta^s (1-\theta)^{8-s}, s = 0, 1, \ldots, 8.$$
The UMP size-$\alpha$ (randomized) decision function is
$$\delta^*(s,u;\alpha) = I\{S > c(\alpha)\} + I\{S = c(\alpha), u \le \gamma(\alpha)\}$$
with 
$$c(\alpha) = \inf\{c: \Pr\{S > c|\theta=.5\} \le \alpha\} \quad \mbox{and} \quad
\gamma(\alpha) = \frac{\alpha - \Pr\{S > c(\alpha)|\theta=.5\}}{\Pr\{S = c(\alpha)|\theta=.5\}}.$$
For the decision process $\Delta = \{\delta^*(\cdot,\cdot;\alpha): \alpha \in (0,1)\}$ the associated ROC function at $\theta = \theta_1$ is
$$\rho(\alpha;\theta_1) = \Pr\{S > c(\alpha) | \theta_1\} + \gamma(c(\alpha)) \Pr\{S = c(\alpha) | \theta_1\}$$
with derivative given by
$$\rho^\prime(\alpha;\theta_1) = \frac{\Pr\{S = c(\alpha) | \theta = \theta_1\}}{\Pr\{S = c(\alpha) | \theta = .5\}}.$$
This is the density function, which is piecewise constant, of the $P$-functional under $\theta = \theta_1$, with the $P$-functional being
$$P \equiv P(S,U) = \Pr\{S^* > S\} + U \Pr\{S^* = S\}$$
with $S^*$ having a binomial distribution with parameters $n=8$ and $\theta = .5$ and is independent of $(S,U)$.

\begin{figure}
\caption{Plots of $l_D(\theta_1;s)$ and $l_P(\theta_1;s)$ with respect to $\theta_1$ for different possible realizations of $S$, the number of correct identifications out of $n=8$ trials, in the tea-tasting experiment {\em under} the binomial model. For $l_D$, an LoS of $\alpha = .05$ was used. For the $l_D$ plot, the upper curve is for $s \ge 7$ with $d = 1$ ($H_0: \theta = .5$ was rejected), while the lower curve is for $s \le 6$ with $d = 0$ ($H_0$ not rejected).}
\label{fig-tea tasting binomial}
\includegraphics[width=\textwidth,height=2in]{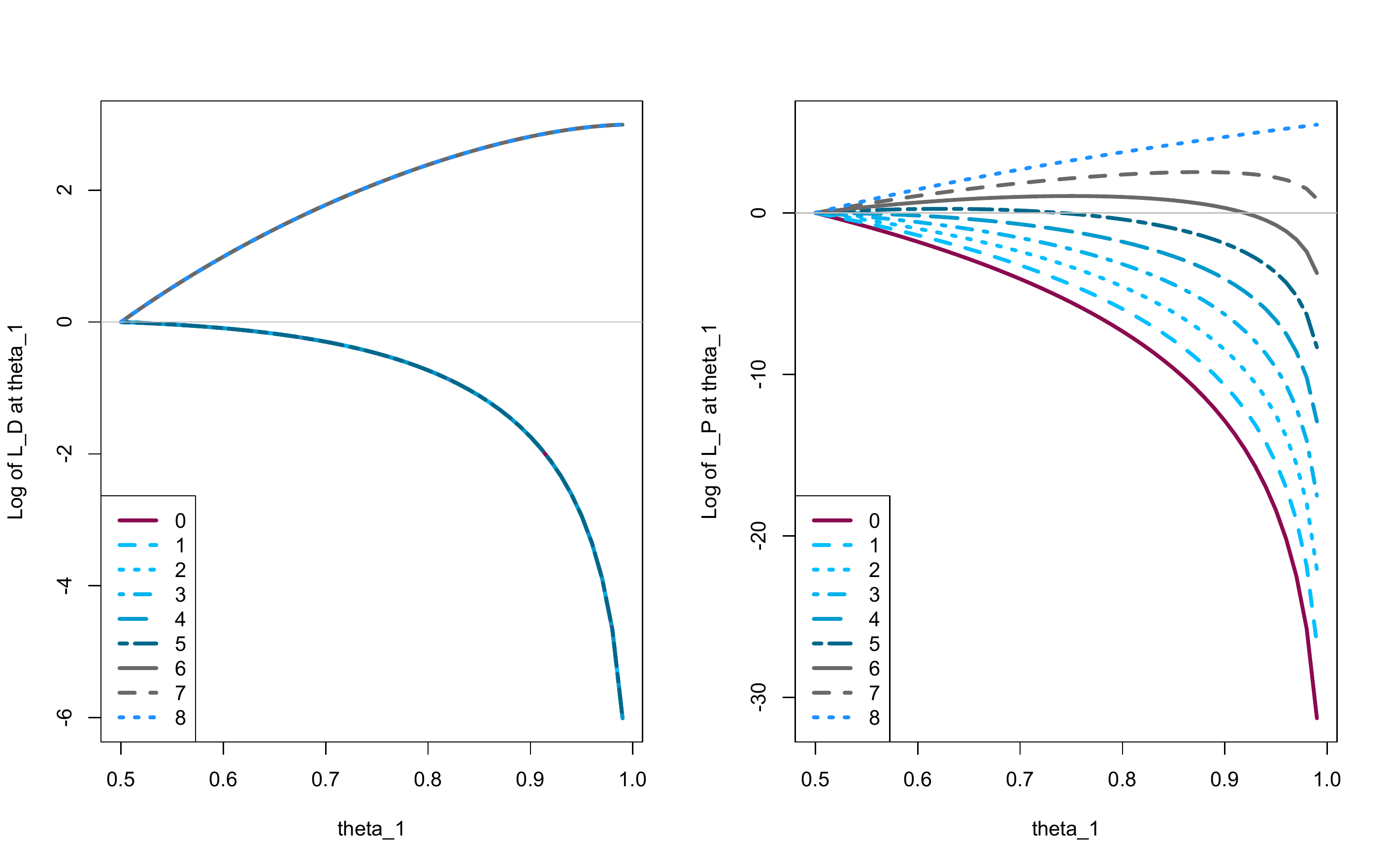}
\end{figure}

Figure \ref{fig-tea tasting binomial} provides the plots of $l_D(\theta_1;s)$ and $l_P(\theta_1;s)$ for different values of $\theta_1 > .5$ and for an LoS of $\alpha = .05$. The overlaid plots are for different possible realizations of $S$, which are $s \in \{0,1,2,\ldots,8\}$. In particular, focus on the case $s = 6$ which corresponds to the situation where the lady correctly identifies the order of placement of milk and tea in 6 of the 8 cups. The observed randomization value was $u = .973$ and the realized $P$ was $p = .1416$. The observed decision in this case is $d = 0$, so the $l_D$-plot is below zero, though still close to zero even for $\theta_1$ about .7,  but it then decreases rapidly as $\theta_1$ becomes larger. The associated $l_P$-plot is the third curve from the top, which is slightly above zero, but dips below zero when $\theta_1$ exceeds .9. Observe that even if $s = 8$, corresponding to the lady correctly identifying the orderings in all 8 cups, the strength of the support for $H_1$ is still not strong.

\medskip

{\bf Version 2:} This is the version described in \cite{Fis71} (see also \cite{Lyn19}). The lady is told that out of the 8 cups, 4 of them had the tea placed first before the milk, and in the other 4 it is in the reverse order. The lady is then asked to choose the 4 cups in which the tea was placed before the milk. Let $T$ be the number out of the four cups chosen by the lady in which tea preceded milk. Fisher introduced the notion of the null hypothesis which in this case coincides with the hypothesis that the lady does not have discriminatory abilities, hence the distribution of $T$ under this hypothesis is hypergeometric with parameters 4, 4, and 4, that is,
\begin{equation}
\label{hypergeometric}
p_T(t) = \Pr\{T = t|H_0\} = \frac{{4 \choose t}{4 \choose {4-t}}}{{8 \choose 4}}, t = 0,1,2,3,4.
\end{equation}
$H_0$ is then rejected when $T$ is large, though even this rejection criterion tacitly considers the alternative model that the lady has discriminatory powers. But one could formulate this problem more formally to accord with the Neyman-Pearson framework. When $\theta$ is the probability that the lady could determine which came first (tea or milk), then the probability mass function of $T$ is given by
\begin{equation}
\label{dist of T under H1}
p_T(t;\theta) = \Pr\{T = t|\theta\} = \frac{{4 \choose t}^2 \theta^t (1-\theta)^{8-t}}{\sum_{j=0}^4 {4 \choose j}^2 \theta^j (1-\theta)^{8-j}}, t = 0,1,2,3,4.
\end{equation}
Observe that when $\theta = .5$, $p_T(t;.5)$ is the hypergeometric distribution given in (\ref{hypergeometric}). As in the binomial model, the UMP decision function of size $\alpha$ is of the same form with $T$ replacing $S$ and with $p_T(t;\theta)$ replacing $p_S(s;\theta)$. 

\begin{figure}
\caption{Plots of $l_D(\theta_1;s)$ and $l_P(\theta_1;s)$ with respect to $\theta_1$ for different possible realizations of $T$, the number of correct identifications out of the four cups chosen by the lady in the tea-tasting experiment as described in \cite{Fis71}. For $l_D$, an LoS of $\alpha = .05$ was used. For the $l_D$ plot, the upper curve is for $t = 4$ with $d = 1$ ($H_0: \theta = .5$ was rejected), while the lower curve is for $t \le 3$ with $d = 0$ ($H_0$ not rejected). In Fisher's experiment, the observed value of $T$ was $t = 3$.}
\label{fig-tea tasting hypergeometric}
\includegraphics[width=\textwidth,height=2in]{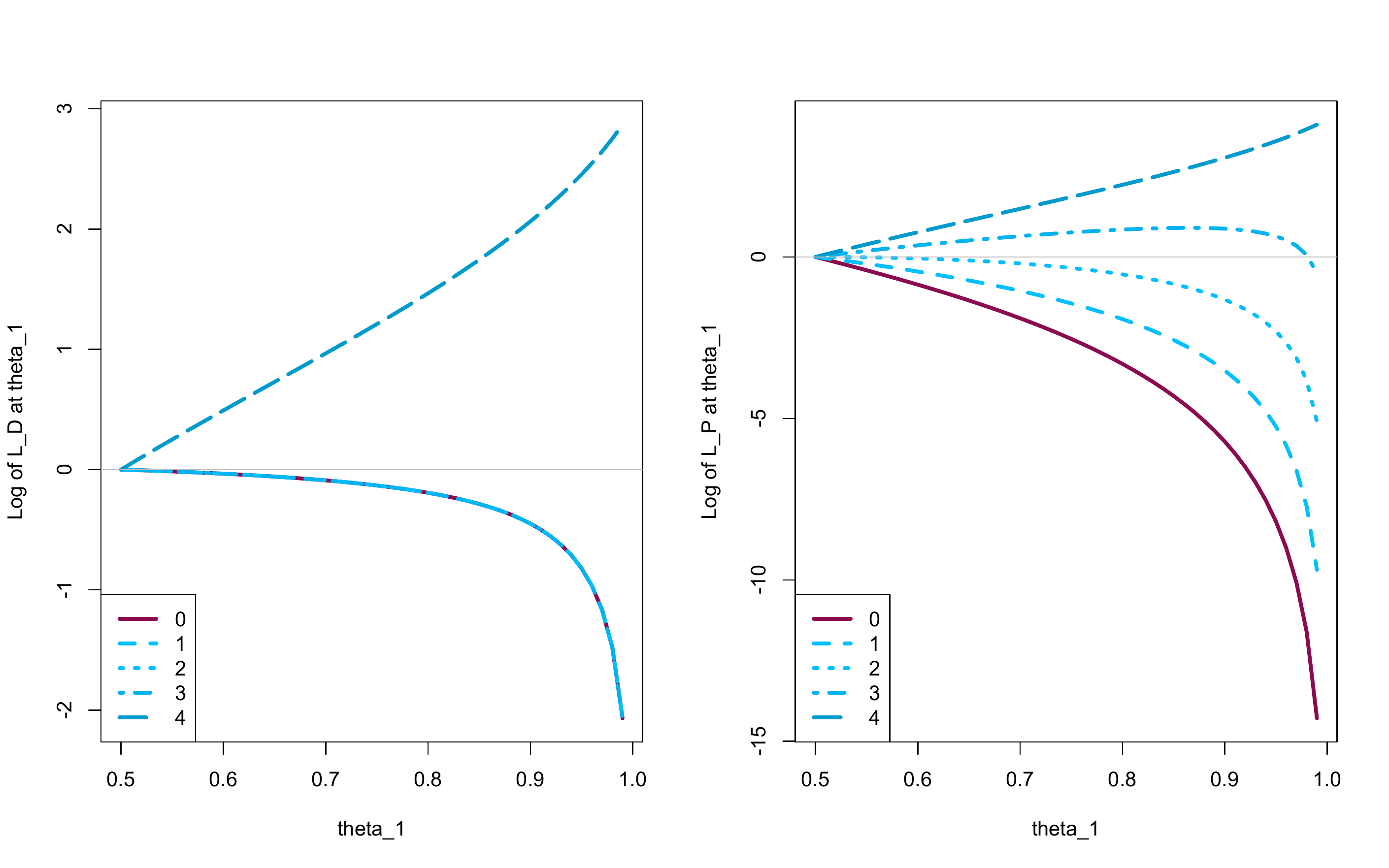}
\end{figure}

The profiles obtained for this version of the experiment are similar to the binomial version as shown in Figure \ref{fig-tea tasting hypergeometric}. For the observed value of $t = 3$ and with randomization value of $u = .815$, the decision is not to reject $H_0$ at LoS of $\alpha = .05$ and the realized $P$ is $p = 0.2007$. This result could be considered as inconclusive since we note that the associated $l_D(\theta_1)$ is close to zero. The same conclusion arises by looking at the second curve from the top in the $l_P(\theta_1)$ plot which is also close to zero. {\em If} the lady had instead obtained $t = 4$, which would have coincided with the correct identifications for all 8 cups, then there would have been a stronger support for $H_1$ based on these profile plots.

\section{Sequential Learning}
\label{sec-Sequential Learning}

The Bayes updating formulas in (\ref{update on x}), (\ref{update on d}), and (\ref{update on p}) when given $x$, $d$, or $p$, respectively, can be used to sequentially update the knowledge about $H_0$ and $H_1$ as results of other studies are sequentially obtained. The current posterior probabilities will become the prior probabilities at the beginning of the next study, and the result of this new study, whether $x$, $d$, or $p$, will be used to update these prior probabilities. The beauty of this approach is that whatever study is performed, so long as it is performed with integrity and honesty, whether it is a study with large or small sample sizes, will be able to contribute to the updating of the knowledge about $H_0$ and $H_1$. Theorem \ref{posterior convergence} states that if $H_0$ [$H_1$] is true, and provided that the initial prior probabilities of $H_0$ and $H_1$ are not 0 nor 1, then the sequence of posterior probabilities for $H_0$ [$H_1$] will converge almost surely to 1. As such, one could specify a threshold $\epsilon > 0$, say $\epsilon = .0001$ or determined in considerations of the cost consequences of decisions, such that when either posterior probability exceeds $1 - \epsilon$, a final conclusion on the truth or falsity of $H_0$ and $H_1$ is made. This sequential approach to learning about the truth of $H_0$ and $H_1$ is depicted in the pseudo-code below. 

\medskip

\begin{center}
{\bf Pseudo-Code for Sequential Updating of Knowledge of $H_0$ and $H_1$}
\end{center}

\noindent
{\tt START;} \newline
Specify Threshold, $\epsilon = 0.0001$ (say); \newline
Specify Prior Probability of $H_0$, $\kappa_0 \in (0,1)$; \newline
{\tt L1:} \newline
\indent Decide on Experiment and the Decision Process $\Delta = \{\delta(x,u;\alpha)\}$ to Use; \newline
\indent Specify LoS, $\alpha \in (0,1)$; \newline
\indent Perform Experiment to Obtain Data $(x,u)$; \newline
\indent Compute $d = \delta(x,u;\alpha)$ or $p = P(x,u)$; \newline
\indent Update Probability of $H_0$ into $\kappa_0(x,u)$ by either using $(x,u)$, $d$, or $p$ according to updating formulas (\ref{update on x}), (\ref{update on d}), or (\ref{update on p}), respectively; \newline
\indent If (($k_0(x,u) < \epsilon$) or ($k_0(x,u) > 1-\epsilon$)) then \newline
\indent\indent \{Declare $H_0$ is TRUE [FALSE] if $\kappa_0(x,u) > 1 - \epsilon$ [$\kappa_0(x,u) < \epsilon$]; {\tt STOP.}\} \newline
\indent Else \{$\kappa_0 \leftarrow \kappa_0(x,u)$; Back to {\tt L1};\}

\medskip


We will now establish Theorem \ref{posterior convergence}. We do so by establishing some intermediate results. We will assume the following set of conditions:

\medskip

\begin{center}
\begin{minipage}{\textwidth}
\begin{center}
{\bf Set of Conditions}
\end{center}
\begin{itemize}
\item[(i)] Goal is to decide or gain knowledge about $H_0: f = f_0$ and $H_1: f = f_1$.
\item[(ii)] $\nu(\{x: f_0(x) \ne f_1(x)\}) > 0$.
\item[(iii)] For the $m$th study ($m = 1, 2, \ldots$), $X_m = (X_{mj}, j=1,2,\ldots,n_m)$, which are IID random variables from $f$, are observed.
\item[(iv)] The observables $X_1, X_2, \ldots, X_m, \ldots$ are independent of each other.
\item[(v)] The initial prior probabilities $\kappa_0$ and $\kappa_1 = 1 - \kappa_0$ are in $(0,1)$.
\end{itemize}
\end{minipage}
\end{center}

We denote by $\delta_m^*(\cdot,\cdot;\alpha_m)$ the size-$\alpha_m$ MP decision function and by $\rho_m(\alpha_m)$ its associated (power) ROC function. The ROC function of the decision function based on only one observation will be denoted by $\bar{\rho}(\cdot)$. We also denote by $$\Lambda_m(x_m) = \prod_{j=1}^{n_m} \frac{f_1(x_{mj})}{f_0(x_{mj})} = \prod_{j=1}^{n_m} \Lambda(x_{mj})$$ the likelihood ratio for the $m$th study, with $\Lambda(x) = f_1(x)/f_0(x)$. 

\begin{lemma}
\label{lemma-ROC domination}
$\alpha \mapsto \rho_m(\alpha)$ satisfies $\rho_m(\alpha) \ge \bar{\rho}(\alpha) > \alpha$ for every $\alpha \in (0,1)$.
\end{lemma}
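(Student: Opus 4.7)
The plan is to split the double inequality into its two pieces and treat them separately. The right-hand inequality $\bar{\rho}(\alpha) > \alpha$ is simply Proposition \ref{prop-ROC properties}(a) applied to the single-observation decision process: condition (ii) in the Set of Conditions is exactly the hypothesis $\nu\{f_0\ne f_1\} > 0$ needed there, so the single-observation MP decision function has power strictly greater than its size on $(0,1)$.

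For the main inequality $\rho_m(\alpha)\ge\bar{\rho}(\alpha)$, the idea is a trivial embedding: any decision function based on one observation is also a decision function based on $n_m$ observations (one that simply ignores the other $n_m-1$ coordinates). Concretely, let $\bar{\delta}^*(\cdot,\cdot;\alpha)$ denote the MP size-$\alpha$ decision function for $H_0$ vs.\ $H_1$ based on a single observation, with power $\bar{\rho}(\alpha)$. Define $\tilde{\delta}(x_{m1},\ldots,x_{mn_m},u;\alpha)=\bar{\delta}^*(x_{m1},u;\alpha)$. Because $X_{m1},\ldots,X_{mn_m}$ are i.i.d.\ from $f_0$ under $H_0$ and from $f_1$ under $H_1$ (condition (iii)), this $\tilde{\delta}$ has size exactly $\alpha$ and power exactly $\bar{\rho}(\alpha)$. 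Since $\delta_m^*(\cdot,\cdot;\alpha)$ is MP of level $\alpha$ among all $\{0,1\}$-valued functions of $(x_{m1},\ldots,x_{mn_m},u)$ with size at most $\alpha$, it must dominate $\tilde{\delta}$ in power, giving $\rho_m(\alpha)\ge\bar{\rho}(\alpha)$.

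Chaining the two steps yields $\rho_m(\alpha)\ge\bar{\rho}(\alpha)>\alpha$ for every $\alpha\in(0,1)$, which is the claim. The hardest part is essentially bookkeeping: one has to verify that $\tilde{\delta}$ is measurable with respect to $\sigma(\mathfrak{X}^{n_m})\otimes\sigma(\mathfrak{U})$ and lies in the correct decision-function space so that the Neyman--Pearson optimality of $\delta_m^*$ applies to it — but this is immediate because $\tilde{\delta}$ is just a projection composed with a measurable function. No delicate analysis is required; the result is a monotonicity-in-sample-size statement that falls out of the optimality characterization in Theorem 3.1 together with Proposition \ref{prop-ROC properties}(a).
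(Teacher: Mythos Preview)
Your proposal is correct and follows essentially the same approach as the paper: invoke Proposition~\ref{prop-ROC properties}(a) for the strict inequality $\bar{\rho}(\alpha)>\alpha$, and for $\rho_m(\alpha)\ge\bar{\rho}(\alpha)$ observe that the one-observation MP rule is a member of the class of size-$\alpha$ rules based on $n_m$ observations, so the MP rule over the larger class has at least as much power. The paper's proof is terser, but your explicit construction of $\tilde{\delta}$ and the remarks on measurability are harmless elaborations of the same embedding argument.
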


\begin{proof}
The second inequality has already been established in Proposition \ref{prop-ROC properties}. The first inequality follows from the fact that the size-$\alpha$ MP decision function based on only one observation belongs to the collection of all size-$\alpha$ decision functions based on $n_m$ observations. As such the power of the MP decision function in this collection is at least equal to all other decision functions in this collection, in particular, the MP decision function based only on one observation.
\end{proof}

\begin{lemma} 
\label{lemma-convergence of sum of log Lambda}
If $E_0[|\log \Lambda(X)|] < \infty$, then as $M \rightarrow \infty$,
$\sum_{m=1}^M \log \Lambda_m(X_m) \rightarrow -\infty$
almost surely under $H_0$.
\end{lemma}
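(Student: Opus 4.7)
The plan is to reduce this to the classical Strong Law of Large Numbers applied to the doubly-indexed family of IID random variables $\{\log \Lambda(X_{mj}) : m \ge 1, 1 \le j \le n_m\}$ under $H_0$, after observing that the per-observation mean of $\log \Lambda$ is strictly negative.

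First, I would note the basic identity $\log \Lambda_m(X_m) = \sum_{j=1}^{n_m} \log \Lambda(X_{mj})$, so that
\[
S_M := \sum_{m=1}^M \log \Lambda_m(X_m) = \sum_{i=1}^{N_M} Z_i,
\]
where the $Z_i$ are the re-indexed $\log \Lambda(X_{mj})$ listed in the order they are observed and $N_M = \sum_{m=1}^M n_m$. Under $H_0$, all of the $X_{mj}$ are IID from $f_0$ (by conditions (iii)–(iv)), so the $Z_i$ are IID copies of $\log \Lambda(X)$ with $X \sim f_0$. Since $n_m \ge 1$ for each $m$, we have $N_M \ge M \to \infty$.

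Second, I would compute the mean. Let $\mu := E_0[\log \Lambda(X)]$; by hypothesis $E_0[|\log \Lambda(X)|] < \infty$, so $\mu$ is well-defined and finite. Jensen's inequality applied to the strictly concave function $\log$ gives
\[
\mu = E_0\!\left[\log \frac{f_1(X)}{f_0(X)}\right] \le \log E_0\!\left[\frac{f_1(X)}{f_0(X)}\right] = \log 1 = 0,
\]
with equality iff $f_1/f_0$ is $P_0$-almost-surely constant, i.e.\ $f_0 = f_1$ $\nu$-a.e.\ on $\{f_0 > 0\}$. Condition (ii), $\nu\{f_0 \neq f_1\} > 0$, rules this out, so $\mu < 0$ strictly.

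Third, apply Kolmogorov's SLLN to the IID sequence $(Z_i)$: almost surely under $H_0$,
\[
\frac{1}{N} \sum_{i=1}^{N} Z_i \xrightarrow{N \to \infty} \mu < 0.
\]
Therefore $\sum_{i=1}^{N} Z_i \to -\infty$ a.s., and composing with the (possibly random, possibly deterministic) sequence $N_M \uparrow \infty$ gives $S_M = \sum_{i=1}^{N_M} Z_i \to -\infty$ almost surely, which is the desired conclusion.

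The only point that requires a little care is the case where the sample sizes $n_m$ are themselves random (as in the simulation in Section~\ref{sec-Replicability}); but provided they are chosen independently of the observables or, more generally, each $n_m \ge 1$ so that $N_M \to \infty$ a.s., the subsequence argument goes through because convergence to $-\infty$ along the full sequence $N$ forces convergence along any a.s.\ divergent subsequence $N_M$. Thus the main obstacle is simply verifying the strict inequality $\mu < 0$, which is where condition (ii) does the essential work.
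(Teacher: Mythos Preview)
Your proof is correct and follows essentially the same route as the paper's: expand $\sum_m \log\Lambda_m(X_m)$ into a double sum over the IID terms $\log\Lambda(X_{mj})$, invoke Jensen's inequality (strict, by condition~(ii)) to get $E_0[\log\Lambda(X)]<0$, and then apply Kolmogorov's SLLN together with $N_M=\sum_{m=1}^M n_m\to\infty$. Your added remark about random sample sizes and the subsequence argument is a welcome bit of extra care that the paper leaves implicit.
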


\begin{proof}
First note that
$\sum_{m=1}^M \log \Lambda_m(X_m) =  \sum_{m=1}^M \sum_{j=1}^{n_m} \log \Lambda(X_{mj}).$
Under $H_0$ we have, using Jensen's Inequality and since $\Pr_0\{f_0(X) \ne f_1(X)\} > 0$, that
\begin{equation}
\label{negative expectation of log Lambda}
E_0[\log \Lambda(X)] < \log E_0[\Lambda(X)] = \log \int \left[\frac{f_1(x)}{f_0(x)}\right] f_0(x) \nu(dx) = \log (1) = 0.
\end{equation}
Since $\{\log \Lambda(X_{mj}), j=1,\ldots,n_m; m=1,2,\ldots\}$ are IID random variables and by the condition that $E_0[\log \Lambda(X)] < \infty$, then by Kolmogorov's Strong Law of Large Numbers (see Theorem 7.5.1 in \cite{Res14}) it follows that, as $M \rightarrow \infty$,
$$\frac{1}{\sum_{m=1}^{M} n_m} \sum_{m=1}^M \sum_{j=1}^{n_m} \log \Lambda(X_{mj})$$
converges almost surely, under $H_0$, to $E_0[\log \Lambda(X)]$, which from (\ref{negative expectation of log Lambda}) is strictly negative. The result thence follows since $n_m \ge 1$ for every $m$ so that $\sum_{m=1}^M n_m \rightarrow \infty$ as $M \rightarrow \infty$.
\end{proof}

Below we let, for $m = 1,2, \ldots$,
$D_m = \delta_m^*(X_m,U_m;\alpha_m)$ and
$$V_m = D_m \log \left[\frac{\rho_m(\alpha_m)}{\alpha_m}\right] + (1-D_m) \log\left[\frac{1-\rho_m(\alpha_m)}{1-\alpha_m}\right].$$

\begin{lemma}
\label{convergence of l_D}
Suppose there exists an $\epsilon > 0$ and an open interval $I \subset [0,1]$ such that
$$\sup_{\alpha \in I} \left[ \alpha \log \left(\frac{\bar{\rho}(\alpha)}{\alpha}\right) + (1-\alpha) \log \left(\frac{1-\bar{\rho}(\alpha)}{1 - \alpha}\right) \right] \le -\epsilon,$$
and for the sequences of LOSs $\{\alpha_m, m=1,2,\ldots\} \subset I$ and sample sizes $\{n_m, m=1,2,\ldots\}$, we have
$$\sum_{m=1}^\infty \frac{\alpha_m(1-\alpha_m)}{m^2} \left[\log\left(\frac{\rho_m(\alpha_m)}{\alpha_m} \frac{1-\alpha_m}{1-\rho_m(\alpha_m)}\right)\right]^2 < \infty.$$
Then,  under $H_0$,
$\sum_{m=1}^M V_m \rightarrow -\infty$,
almost surely as $M \rightarrow \infty$.
\end{lemma}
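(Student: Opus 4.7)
Under $H_0$ the decisions $D_m$ are independent Bernoulli variables with $\Pr_0\{D_m = 1\} = \alpha_m$, so
\[
E_0[V_m] = \alpha_m \log\!\left(\frac{\rho_m(\alpha_m)}{\alpha_m}\right) + (1-\alpha_m)\log\!\left(\frac{1-\rho_m(\alpha_m)}{1-\alpha_m}\right),
\]
which is the negative Kullback--Leibler divergence $-\mathrm{KL}(\mathrm{Bern}(\alpha_m) \| \mathrm{Bern}(\rho_m(\alpha_m)))$. My first step is to bound $E_0[V_m]$ uniformly above by $-\epsilon$. Define $\phi_\alpha(r) = \alpha \log(r/\alpha) + (1-\alpha)\log((1-r)/(1-\alpha))$ on $(\alpha,1)$. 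A direct differentiation gives $\phi'_\alpha(r) = (\alpha - r)/[r(1-r)] < 0$ for $r > \alpha$, so $\phi_\alpha$ is strictly decreasing on $(\alpha,1)$. By Lemma \ref{lemma-ROC domination} we have $\rho_m(\alpha_m) \ge \bar{\rho}(\alpha_m) > \alpha_m$, and hence
\[
E_0[V_m] = \phi_{\alpha_m}(\rho_m(\alpha_m)) \le \phi_{\alpha_m}(\bar{\rho}(\alpha_m)) \le -\epsilon,
\]
where the last inequality uses the assumed uniform bound over $\alpha_m \in I$.

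Next, I would compute the variance. Since $V_m$ takes the value $\log(\rho_m(\alpha_m)/\alpha_m)$ with probability $\alpha_m$ and $\log((1-\rho_m(\alpha_m))/(1-\alpha_m))$ with probability $1 - \alpha_m$ under $H_0$, a standard Bernoulli variance calculation yields
\[
\mathrm{Var}_0(V_m) = \alpha_m(1-\alpha_m)\left[\log\!\left(\frac{\rho_m(\alpha_m)}{\alpha_m}\cdot\frac{1-\alpha_m}{1-\rho_m(\alpha_m)}\right)\right]^2.
\]
The hypothesis of the lemma is precisely $\sum_{m=1}^\infty \mathrm{Var}_0(V_m)/m^2 < \infty$.

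With these two ingredients in hand, Kolmogorov's strong law of large numbers for independent but non-identically distributed random variables (see, e.g., Theorem 7.5.1 in \cite{Res14}) applied to the independent sequence $\{V_m\}$ yields
\[
\frac{1}{M}\sum_{m=1}^M \bigl(V_m - E_0[V_m]\bigr) \longrightarrow 0 \quad \text{almost surely under } H_0.
\]
Combining this with the mean bound $E_0[V_m] \le -\epsilon$, we get
\[
\limsup_{M \to \infty} \frac{1}{M}\sum_{m=1}^M V_m \le -\epsilon < 0 \quad \text{a.s.},
\]
which forces $\sum_{m=1}^M V_m \to -\infty$ almost surely.

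\textbf{Main obstacle.} The only nontrivial step is the mean bound $E_0[V_m] \le -\epsilon$: one must recognize that $\phi_\alpha(r)$ is decreasing in $r$ beyond $\alpha$ so that replacing $\bar{\rho}(\alpha_m)$ by the typically larger $\rho_m(\alpha_m)$ only decreases $E_0[V_m]$, permitting the hypothesis (stated in terms of the single-observation ROC $\bar{\rho}$) to transfer to the $n_m$-observation decision function. Once that monotonicity observation is in place, the remainder is a routine application of Kolmogorov's SLLN to independent Bernoulli-type summands whose variances satisfy the explicit summability criterion stated as a hypothesis.
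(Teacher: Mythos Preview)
Your proposal is correct and follows essentially the same approach as the paper: establish the monotonicity of $\phi_\alpha(r)$ in $r$, invoke Lemma~\ref{lemma-ROC domination} to replace $\rho_m(\alpha_m)$ by $\bar{\rho}(\alpha_m)$ and obtain the uniform mean bound $E_0[V_m]\le -\epsilon$, then apply Kolmogorov's strong law for independent non-identically distributed summands under the variance summability hypothesis. The only minor discrepancy is that the relevant result in \cite{Res14} for the non-IID case is Corollary~7.4.1 rather than Theorem~7.5.1 (which treats the IID case).
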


\begin{proof}
It is easy to see that the mapping $y \mapsto g(y) = \alpha \log(y/\alpha) + (1-\alpha) \log((1-y)/(1-\alpha))$ for $\alpha \in (0,1)$ and $y \in (\alpha,1)$ is decreasing in $y$. Consequently, using this result, Lemma \ref{lemma-ROC domination}, and the first condition of the Lemma,
\begin{eqnarray*}
\mu_m & \equiv & E_0(V_m) = \alpha_m \log\left(\frac{\rho_m(\alpha_m)}{\alpha_m}\right) + (1-\alpha_m) \log\left(\frac{1-\rho_m(\alpha_m)}{1 - \alpha_m}\right) \\
& \le & \alpha_m \log\left(\frac{\bar{\rho}(\alpha_m)}{\alpha_m}\right) + (1-\alpha_m)\log\left(\frac{1 - \bar{\rho}(\alpha_m)}{1-\alpha_m}\right) \\ & \equiv & \bar{\mu}(\alpha_m) 
 \le  \sup_{\alpha \in I} \bar{\mu}(\alpha) 
 \le  -\epsilon.
\end{eqnarray*}
By the second condition of the Lemma, we have
$\sum_{m=1}^\infty Var_0\left[{V_m}/{m}\right] < \infty.$
Therefore, by Corollary 7.4.1 in \cite{Res14}, we conclude that, under $H_0$,
$$\frac{1}{M} \sum_{m=1}^M {(V_m - \mu_m)} \rightarrow 0$$ almost surely. Thus, as $M \rightarrow \infty$, since $\sum_{m=1}^M \mu_m \rightarrow -\infty$, then $\sum_{m=1}^M V_m \rightarrow -\infty$ almost surely.
\end{proof}

We remark that the conditions in the lemma impose some constraints on the sample size and the LoS sequences. These conditions will hold, for instance, if the sample sizes $n_m$s are bounded and $\alpha_m$s are bounded away from 0 and 1, which in practice are realistic conditions.

\begin{lemma}
\label{lemma-convergence of l_P}
Let $P_m$ be the $P$-functional for the $m$th study.  If
$$\sum_{m=1}^\infty \frac{1}{m^2} \int_0^1 [\log \rho_m^\prime(u)]^2 du < \infty$$
and, for some $\epsilon > 0$,
$\limsup_m \int_0^1 \log \rho_m^\prime(u) du \le -\epsilon$
then, under $H_0$, almost surely, 
$$\sum_{m=1}^M \log \rho_m^\prime(P_m) \rightarrow -\infty.$$
\end{lemma}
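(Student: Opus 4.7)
The plan is to follow the template of Lemma \ref{convergence of l_D}, exploiting the fact that under $H_0$ the $P$-functional $P_m$ is standard uniform while its density under $H_1$ is $\rho_m'$. Hence $\log \rho_m'(P_m)$ plays the role of the log-likelihood ratio based on the $P$-functional, and the argument reduces to a Kolmogorov-type strong law for independent but non-identically distributed summands.

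First I would compute the mean under $H_0$:
$$\mu_m \equiv E_0[\log \rho_m'(P_m)] = \int_0^1 \log \rho_m'(u)\, du,$$
which is finite by the first hypothesis. By strict concavity of $\log$ together with $\int_0^1 \rho_m'(u)\, du = 1$ and the fact that $\rho_m'$ is non-constant (a consequence of $\nu\{f_0 \ne f_1\} > 0$ from the standing conditions and Proposition \ref{prop-ROC properties}), Jensen's inequality already yields $\mu_m < 0$. More importantly, the $\limsup$ hypothesis guarantees a uniform negative bound: there exists $m_0$ such that $\mu_m \le -\epsilon/2$ for all $m \ge m_0$. Consequently
$$\sum_{m=1}^M \mu_m \longrightarrow -\infty \quad \text{at linear rate in } M.$$

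Next I would control the fluctuations. Since the studies are independent by condition (iv), the random variables $\{\log \rho_m'(P_m)\}_{m \ge 1}$ are independent under $H_0$. Bounding the variance crudely,
$$\mathrm{Var}_0\!\left(\log \rho_m'(P_m)\right) \le E_0\!\left[(\log \rho_m'(P_m))^2\right] = \int_0^1 [\log \rho_m'(u)]^2\, du,$$
so the first hypothesis of the lemma gives
$$\sum_{m=1}^\infty \frac{\mathrm{Var}_0(\log \rho_m'(P_m))}{m^2} < \infty.$$
Applying the Kolmogorov-type criterion (Corollary 7.4.1 in \cite{Res14}) already invoked in Lemma \ref{convergence of l_D}, it follows that, under $H_0$,
$$\frac{1}{M}\sum_{m=1}^M \left(\log \rho_m'(P_m) - \mu_m\right) \longrightarrow 0 \quad \text{almost surely.}$$

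Finally I would combine the two pieces: since $M^{-1}\sum_{m=1}^M (\log \rho_m'(P_m) - \mu_m) \to 0$ a.s.\ and $M^{-1}\sum_{m=1}^M \mu_m \le -\epsilon/2 + o(1)$, we get $M^{-1}\sum_{m=1}^M \log \rho_m'(P_m) \le -\epsilon/4$ eventually, a.s., hence $\sum_{m=1}^M \log \rho_m'(P_m) \to -\infty$ almost surely under $H_0$, as desired. I do not anticipate a real obstacle here; the only subtle point is making sure the crude variance bound $\mathrm{Var} \le E[X^2]$ is enough to invoke the cited Kolmogorov criterion, and it is, because the hypothesis is stated directly in terms of the second moments of $\log \rho_m'(U)$ for $U$ uniform.
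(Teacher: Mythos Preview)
Your proposal is correct and follows essentially the same route as the paper's own proof: compute $\mu_m=\int_0^1\log\rho_m'(u)\,du$, invoke Jensen to see $\mu_m<0$ and the $\limsup$ hypothesis to force $\sum_m\mu_m\to-\infty$, bound the variance by the second moment so that the first hypothesis feeds directly into Corollary~7.4.1 of \cite{Res14}, and then combine the Ces\`aro convergence of the centered sums with the linear divergence of the means. Your write-up is in fact slightly more explicit than the paper's at the final combination step (making the $-\epsilon/2$ and $-\epsilon/4$ thresholds visible), but the argument is the same.
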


\begin{proof}
The first condition guarantees the finiteness of the mean and variance of $\log \rho_m^\prime(P_m)$, and by Jensen's Inequality we have
$$E_0[\log \rho_m^\prime(P_m)]  < \log E_0\rho_m^\prime(P_m) = \log \int_0^1 \rho_m^\prime(u) du = \log (1) = 0.$$
The second condition guarantees that $\sum_{m=1}^M E_0[\log \rho_m^\prime(P_m)] \rightarrow -\infty$ as $M \rightarrow \infty$. Since the first condition also implies that
$$\sum_{m=1}^\infty Var_0\left[\frac{\log \rho_m^\prime(P_m)}{m}\right] < \infty,$$
then, by Corollary 7.4.1 in \cite{Res14},  we have, as $M \rightarrow \infty$,
$$\sum_{m=1}^M \frac{(\log \rho_m^\prime(P_m) - E_0[\log \rho_m^\prime(P_m)])}{M} \rightarrow 0$$ almost surely under $H_0$. Thus, as $M \rightarrow \infty$, since $\sum_{m=1}^M E_0[\log \rho_m^\prime(P_m)] \rightarrow -\infty$, then $\sum_{m=1}^M \log \rho_m^\prime(P_m) \rightarrow -\infty$ almost surely under $H_0$.
\end{proof}

\begin{theorem}
\label{posterior convergence}
Under the conditions stated, together with the conditions in the lemmas, the sequence of posterior probabilities of $H_0$, given either $X_m$, $D_m$, or $P_m$ on the $m$th study, converges almost surely, under $H_0$, to one.
\end{theorem}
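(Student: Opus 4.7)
The plan is to reduce the three convergence claims to the three preceding lemmas by noting that, under the assumed independence of the studies, sequential Bayes updating turns into a telescoping product of likelihood ratios, so the posterior odds of $H_0$ vs.\ $H_1$ is a product whose logarithm is an additive random walk over $m$. More precisely, if $W_m$ denotes the data summary that is used on the $m$th study (either $X_m$, $D_m = \delta_m^*(X_m,U_m;\alpha_m)$, or $P_m = P(X_m,U_m)$) and $L_m(W_m)$ denotes the corresponding likelihood ratio of $H_1$ to $H_0$, then iterating (\ref{update on x}), (\ref{update on d}), or (\ref{update on p}) gives
$$\kappa_0^{(M)} = \frac{1}{1 + (\kappa_1/\kappa_0)\prod_{m=1}^{M} L_m(W_m)} = \frac{1}{1 + \exp\bigl\{\log(\kappa_1/\kappa_0) + \sum_{m=1}^M \log L_m(W_m)\bigr\}}.$$
Since $\kappa_0,\kappa_1\in(0,1)$, the initial term $\log(\kappa_1/\kappa_0)$ is finite, and therefore $\kappa_0^{(M)}\to 1$ almost surely under $H_0$ if and only if $\sum_{m=1}^M \log L_m(W_m)\to-\infty$ almost surely under $H_0$.

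The first step, then, is to make this equivalence explicit in each of the three updating modes, identifying the relevant $L_m$: in the $X_m$-mode, $L_m(X_m)=\Lambda_m(X_m)=\prod_{j=1}^{n_m}\Lambda(X_{mj})$; in the $D_m$-mode, $L_m(D_m)=\Lambda_D(D_m;\alpha_m)$ from (\ref{lik ratio based on D}), so that $\log L_m(D_m)=V_m$; and in the $P_m$-mode, the density ratio is $\rho_m^\prime(P_m)/1$, so $\log L_m(P_m) = \log \rho_m^\prime(P_m)$.

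The second step is simply to cite the three preceding lemmas: Lemma \ref{lemma-convergence of sum of log Lambda} handles the $X_m$-updating, Lemma \ref{convergence of l_D} handles the $D_m$-updating (with the additional assumptions stated there on the LoS sequence $\{\alpha_m\}$ and on $\{n_m\}$), and Lemma \ref{lemma-convergence of l_P} handles the $P_m$-updating. Each of these lemmas gives exactly the required almost-sure divergence of the relevant partial sum to $-\infty$ under $H_0$, so the preceding equivalence then yields $\kappa_0^{(M)}\to 1$ a.s.\ under $H_0$.

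There is essentially no analytic obstacle beyond what the lemmas have already absorbed; the only conceptual point that needs to be handled cleanly is the initial Bayes bookkeeping, namely verifying that independent, sequentially performed studies produce posterior odds that factor into the product $\prod_m L_m(W_m)$ regardless of whether the updating is based on the raw data, the binary decision, or the $P$-functional. This factorization is immediate from the independence assumption (iv) together with the fact that, for each $m$, the quantities $X_m$, $D_m$, and $P_m$ are functions of the $m$th experiment's sample and so retain likelihood ratios of the forms derived in Section \ref{sec-Knowledge Updating}. Once this is noted, the theorem follows.
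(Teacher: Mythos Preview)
Your argument is correct and captures the essential mechanism: the posterior probability of $H_0$ after $M$ studies equals $[1 + (\kappa_1/\kappa_0)\exp\{S_M\}]^{-1}$ with $S_M$ the running sum of log-likelihood ratios, so $\kappa_0^{(M)}\to 1$ a.s.\ under $H_0$ reduces to $S_M\to-\infty$, and the three lemmas deliver exactly that for each of the three summaries. This is the same skeleton as the paper's proof.

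There is one point where the paper goes further. The paper reads the theorem as allowing the reporting mode to \emph{vary across studies}: on study $m$ the update may be based on $X_m$, $D_m$, or $P_m$, with the choice encoded by an indicator $J_m\in\{1,2,3\}$ that can change with $m$ (this is also how the pseudo-code in Section~\ref{sec-Sequential Learning} is written). In that mixed setting the running sum is
\[
\sum_{j\le m} I\{J_j=1\}\log\Lambda_j(X_j) \;+\; \sum_{j\le m} I\{J_j=2\}V_j \;+\; \sum_{j\le m} I\{J_j=3\}\log\rho_j^\prime(P_j),
\]
and no single lemma applies directly. The paper deals with this by noting that at least one of the three index sets $\{m:J_m=j\}$ is infinite, and that since each full partial-sum sequence diverges to $-\infty$ (by the respective lemma), so does any infinite subsequence of it; hence the mixed sum diverges to $-\infty$. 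Your proposal, as written, handles only the three pure-mode sequences (all-$X$, all-$D$, or all-$P$); if the intended statement is the mixed one, you would need to append this short subsequence argument.
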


\begin{proof}
After the $m$th study, the posterior probability of $H_0$ is given by
\begin{eqnarray*}
\kappa_{0m} & = & \left[1 + \frac{\kappa_1}{\kappa_0} \times  \exp\left\{
\sum_{j=1}^m I\{J_j =1\} \log \Lambda_j(X_j) + 
\right.\right. \\
&& \left.\left.
\sum_{j=1}^m I\{J_j = 2\} V_j +
\sum_{j=1}^m I\{J_j = 3\} \log \rho_j^\prime(P_j)\right\}\right]^{-1},
\end{eqnarray*}
where $J_m$ equals 1 if $X_m$ is reported, 2 if $D_m$ is reported, and 3 if $P_m$ is reported in the $m$th study. Let $\{J_m, m=1,2,\ldots\}$ be the specific sequence that is observed. Define subsequences
$\{J_m^{(1)}, m=1,2,\ldots\}$, $\{J_m^{(2)}, m=1,2,\ldots\}$, and $\{J_m^{(3)}, m=1,2,\ldots\}$
with $J_m^{(j)} = \min\{k > J_m^{(j)}: J_m = j\}$ for $j=1,2,3$. At least one of these subsequences will be an infinite subsequence (note that it is possible all three subsequences are infinite subsequences). By Lemmas \ref{lemma-convergence of sum of log Lambda}, \ref{convergence of l_D}, and \ref{lemma-convergence of l_P}, outside of a $\Pr_0$-null set, $\{\sum_{j=1}^m \log \Lambda_j(X_j), m=1,2,\ldots\}$, $\{\sum_{j=1}^m V_j, m=1,2,\ldots\}$, and $\{\sum_{j=1}^m \log \rho_j^\prime(P_j)\}$ all diverges to $-\infty$, hence any {\em infinite} subsequence from each of these sequences will also diverge to $-\infty$. Since at least one of $\{J_m^{(j)}, m=1,2,\ldots\}$ is an infinite subsequence, then outside of a $P_0$-null set, $\{\sum_{j=1}^m I\{J_j =1\} \log \Lambda_j(X_j) + \sum_{j=1}^m I\{J_j = 2\} V_j + \sum_{j=1}^m I\{J_j = 3\} \log \rho_j^\prime(P_j), m=1, 2, \ldots\}$ diverges to $-\infty$. Therefore, $P_0$-almost surely, $\{\kappa_{0m}, m=1,2,\ldots\}$ converges to 1 since $\kappa_0 \in (0,1)$.
\end{proof}

To demonstrate the phenomenon described in Theorem \ref{posterior convergence} empirically, we refer back to the fourth panels of Figures \ref{under H0} and \ref{under H1}. These panels depict the sequence of posterior probabilities for $H_0$ updated from the sequence of decisions $d_m$s and also the sequence of realized $P$-functionals $p_m$s. Observe that the plot of these posterior probabilities with respect to $m$ are wiggly for small $m$, but starts stabilizing and converging to 1 when $H_0$ is true and to 0 when $H_0$ is false. Through sequential updating, the issue of whether a specific result is replicable or not becomes a moot point, since whatever that result, so long as it was obtained with integrity and honesty, it will contribute to the updating of the knowledge about $H_0$ and $H_1$, in essence, it becomes a data point for the search for truth.

%

\section{Impact of Publication or Reporting Bias}
\label{sect-Publication bias}

Theorem \ref{posterior convergence} points to a coherent and sensible approach to sequentially acquiring knowledge about $H_0$ and $H_1$ in the sense that if more and more studies are conducted, then the sequence of posterior probabilities of $H_0$ [$H_1$] will converge to 1 under $H_0$ [$H_1$]. But, what could potentially derail this approach? The reality of the publication process of manuscripts submitted for publication or just on the reporting of results of studies is the existence of a bias against results that are `not-significant'. This means that if a study resulted in a decision $d = 0$, then there is a smaller chance that this result will be published or reported compared to when the decision is $d = 1$. Similarly, studies with large observed $P$-functionals have smaller chances of getting reported or published. Such studies whose results do not get published or reported could therefore not be used in the posterior probability updating of the knowledge about $H_0$ and $H_1$. What will be the impact of this?

Recall that we denoted by
$$V = D \log (\rho(\alpha)/\alpha) + (1-D) \log((1-\rho(\alpha))/(1-\alpha))$$
with $D = \delta^*(X,U;\alpha)$ the log-likelihood ratio based on realization of the decision function. When there is no bias in the publication or reporting process, then, under $H_0$, $D$ has a Bernoulli distribution with success probability of $\alpha$. We have used this result to show that, under $H_0$, by using Jensen's Inequality, we have $E_0(V) < 0$. This is the reason why, under $H_0$, the sequence of posterior probabilities converges almost surely to 1.

But consider a simple model where a publication bias exists. Thus, upon observing $D$, which is governed by the probability $P_0$ (more precisely, $P_0 \otimes \lambda$), there is a second stochastic step which determines whether the result of the study will be reported or not. This step will depend only on the observed value of $D$. Let $I = 1 (0)$ if the result of the study is published or reported (not published nor reported). We suppose that the conditional probability distribution of $I$, given $D$, is given by
\begin{eqnarray*}
& Q(I=1|D=1) = \eta_1 = 1 - Q(I=0|D=1);& \\
& Q(I=1|D=0) = \eta_0 = 1 - Q(I=0|D=0), &
\end{eqnarray*}
where $0 \le \eta_0 < \eta_1 \le 1$. We denote by $P_0^*$ the probability measure that governs $(D,I)$ under $P_0$. Under this model, we will only be able to observe $D$ if $I=1$. Thus, the observable $D^*$, which equals $D$ when $I=1$, possesses the probability distribution, under $H_0$, given by
$$P_0^*(D^* = 1) = P_0^*(D=1|I=1) = \frac{\alpha \eta_1}{\alpha \eta_1 + (1-\alpha) \eta_0} = 1 - P_0^*(D^*=0).$$
Observe that since $\eta_0 < \eta_1$, we have $P_0^*(D^* = 1) > \alpha$, indicating that the desired size of $\alpha$ is not anymore satisfied if $D^*$ is the variable that is observed instead of $D$. Denote by
$$V^* = D^* \log (\rho(\alpha)/\alpha) + (1-D^*) \log((1-\rho(\alpha))/(1-\alpha))$$
the observable log-likelihood ratio, purportedly based on $D$.
With $E_0^*(\cdot)$ denoting expectation with respect to the probability measure $P_0^*$, we have
\begin{eqnarray*}
E_0^*(V^*) & = & E_0^*\{D^* \log(\rho(\alpha)/\alpha) + (1-D^*) \log((1-\rho(\alpha))/(1-\alpha))\} \\
& = & \left(\frac{\alpha\eta_1}{\alpha\eta_1 + (1-\alpha)\eta_0}\right) \log\left[
\frac{\rho(\alpha) (1-\alpha)}{\alpha (1-\rho(\alpha))}\right] + \log\left[\frac{1-\rho(\alpha)}{1-\alpha}\right] \\
& >  & \alpha \log\left[
\frac{\rho(\alpha) (1-\alpha)}{\alpha (1-\rho(\alpha))}\right] + \log\left[\frac{1-\rho(\alpha)}{1-\alpha}\right] \\
& = & E_0(V).
\end{eqnarray*}
We already know that $E_0(V) < 0$, but because of the above inequality, we could not anymore guarantee that $E_0^*(V^*) < 0$, that is, it could be that $E_0^*(V^*) > 0$. An extreme case of this situation is when $\eta_1 = 1$ and $\eta_0 = 0$, where studies that do not lead to rejections of $H_0$ are never getting published nor reported, while those that lead to rejections of $H_0$ are always getting published or reported. In this extreme case we will have
$$E_0^*(V^*) = \log\left[
\frac{\rho(\alpha) (1-\alpha)}{\alpha (1-\rho(\alpha))}\right] + \log\left[\frac{1-\rho(\alpha)}{1-\alpha}\right] =  \log\left[
\frac{\rho(\alpha)}{\alpha}\right] > 0.$$
Note that this situation is not outside the realm of possibilities. Imagine, for instance, the situation where a well-respected research team, headed by a prominent Nobel-prize winning scientist, have just published a study which led to the rejection of $H_0$. Subsequent studies by other research teams that did not lead to the rejection of the same $H_0$ may then never see the light of day -- for how could they go against the established research team? But, it is possible that the well-respected research team has committed an error of decision in rejecting $H_0$, since there are no certainties in decision-making based on data.
Now, in such cases where $E_0^*(V^*) > 0$, there is the possibility that, {\em even under $H_0$}, the sequence of posterior probabilities of $H_0$, based on a sequence of $V_1^*, V_2^*, \ldots$, may converge to 0, instead of 1.

Let us also examine the case with $P$-functionals. We have shown, using Jensen's Inequality and the uniformity of $P$ under $H_0$, that $E_0[\log \rho^\prime(P)] < 0$. In the presence of publication or reporting bias against large $P$'s, with $I$ indicating once more if the result of a study is published or reported, suppose that
$$Q(I=1|P=p) = g(p), p \in (0,1),$$
with $g(\cdot)$ not identically equal to 1 almost everywhere and $g(p)$ is non-increasing in $p$. Note that $g(p) \le 1$ hence $\int_0^1 g(p) dp < 1$. These conditions imply that smaller values of $P$ lead to higher chances of getting published or reported. Let us denote by $P^*$ the published or reported $P$, and denote by $P_0^*$ the probability measure induced by $P_0$ and $Q$ that governs $(P,I)$ under $H_0$. Then, the probability density function of $P^*$, under $H_0$, is given by
$$h_{P^*}(p) = \frac{(1)g(p)}{\int_0^1 (1) g(w) dw} \equiv \bar{g}(p), p \in (0,1).$$
Since $g(\cdot)$ is non-increasing, then, for every $p \in (0,1)$, $\int_0^p \bar{g}(w) dw \ge \int_0^p (1) dw = p$, with strict inequality for some $p$. Thus, under $H_0$, $P^*$ is stochastically smaller than $P$. Since $p \mapsto \log \rho^\prime(p)$ is non-increasing, then it follows that
$$E_0^*[\log \rho^\prime(P^*)] = \int_0^1 [\log\rho^\prime(p)] \bar{g}(p) dp > \int_0^1 [\log\rho^\prime(p)] (1) dp = E_0[\log\rho^\prime(P)].$$
Note that this result also follows from the fact that if $V_1 \stackrel{st}{<} V_2$ and if $a(\cdot)$ is an integrable non-increasing function, then $E[a(V_1)] > E[a(V_2)]$. Thus, even if $E_0[\log\rho^\prime(P)] < 0$, there is no more guarantee that $E_0^*[\log\rho^\prime(P^*)] < 0$. In fact, consider an extreme case where we take $g(p) = I\{p \le c\}$ where $c \in (0,1)$ is such that $\rho^\prime(p) >(<) 1$ when $p <(>) c$. Then, in this case, we will have
$$E_0^*[\log\rho^\prime(P^*)] = \int_0^1 [\log\rho^\prime(p)] \bar{g}(p) dp = \frac{1}{c} \int_0^c [\log\rho^\prime(p)] dp > 0.$$
Thus, in the presence of publication or reporting bias, we could have  that $$E_0^*[\log\rho^\prime(P^*)] > 0,$$ so that it is possible that the sequence of posterior probabilities of $H_0$ updated from a sequence of $P_1^*, P_2^*, \ldots$ may converge to 0, instead of 1, {\em even under $H_0$.} 

\section{Considerations on the Choice of LoS}
\label{sect-Choice of LoS}

If possible, it would be preferable to adhere to the principle that a mathematical theory of statistical decision-making or of updating knowledge about $H_0$ and $H_1$ should {\em not} rely on arbitrary constants which cannot be justified within the theory --- for if it does the theory becomes mathematically `impure'. The rationale behind the conventional choice of $\alpha = 0.05$, or some other small value such as 0.01 or 0.10, for the LoS in existing statistical decision-making procedures is in order to put an upper bound to the probability of committing a Type I error, which occurs when $H_0$ is rejected when it is in fact true, considered to be a more serious type of error than a Type II error, which is accepting $H_0$ when in fact it is false. However, this arbitrary choice of $\alpha=0.05$ is difficult to justify within the theory and it could be viewed as a stain in existing statistical decision-making methods and perhaps could be their Waterloo. This choice has been the source of controversies and criticisms of existing hypothesis testing methods. Criticizing the use of such a threshold, Professor John Ioannidis \cite{Ion05} wrote:

\begin{quote}
... the high rate of nonreplication (lack of confirmation) of research discoveries is a consequence of the convenient, yet ill-founded strategy of claiming conclusive research
findings solely on the basis of a single study assessed by formal statistical significance, typically for a $p$-value less than 0.05.
\end{quote}

The attacks on $P$-values and the LoS threshold of $\alpha = .05$ has in fact continued unabated. Recently, there was the editorial article by Wasserstein, Schirm and Lazar \cite{WasSchLaz19} with the provocative title: {\em Moving to a {W}orld {B}eyond ``$p < 0.05$''}. See also the many articles in {\em The American Statistician} issue in which \cite{WasSchLaz19} appeared, which deal with $P$-value controversies, criticisms, and proposed changes. The paper \cite{SzuIoa17} also discusses reasons why the NHST approach is an unsuitable tool in scientific research. But, see also the papers of \cite{Ber03,Chr05} that provide comparisons, contrasts, and reconciliations of the NHST, Neyman-Pearson, and Bayesian approaches to statistical decision-making.

So, how should one choose an LoS value to use? As pointed out earlier, any decision-maker could choose any LoS he desires since whatever his choice is will be properly taken into account in the log-likelihood ratio based on $d$. So, the appropriate question is whether there is a way to choose an LoS value to optimize the information that the decision-maker could acquire from his study. We will argue below that it should depend on the major goal of the decision-maker and the viewpoint of his decision-making process.

\subsection{Game-Theoretic Approach}
\label{subsec-minimax}

If the decision-maker desires to make an immediate decision between $H_0$ and $H_1$ based on his study, then he should consider the cost consequences of his possible decisions. Thus, suppose that $C_{ij}$ is the cost incurred by deciding for $H_j$ when $H_i$ is the truth, with $C_{01} > C_{00}$ and $C_{10} > C_{11}$. The specification of the cost consequences of decisions should reflect the relative severities of the possible decisions under the two possible states of reality. For the decision function $\delta^*(\alpha)$, the expected costs or risks under $H_0$ and $H_1$ are, respectively,
\begin{eqnarray*}
R_0(\alpha) = E_0[\mbox{Cost}(\alpha)] & = & C_{00} (1-\alpha) + C_{01} \alpha; \\
R_1(\alpha) = E_1[\mbox{Cost}(\alpha)] & = & C_{10} (1-\rho(\alpha)) + C_{11} \rho(\alpha).
\end{eqnarray*}
If the decision-maker supposes that he is making his decision against an adversary who controls the choice of which of $H_0$ or $H_1$ will be the truth and whose intent is to maximize his cost, or if he simply wants to be as conservative as possible, then he should utilize an LoS $\alpha$ which will minimize his maximum risk, a minimax approach. The appropriate $\alpha_{M}$ is then given by
\begin{equation}
\label{MinMax solution}
\alpha_{M} = {\arg\min}_{\alpha \in [0,1]} \max\{R_0(\alpha), R_1(\alpha)\}.
\end{equation}
If the curves $\alpha \mapsto R_0(\alpha)$ and $\alpha \mapsto R_1(\alpha)$ intersect over $\alpha \in [0,1]$, then the value of $\alpha_M$ is the $\alpha$ satisfying $R_0(\alpha) = R_1(\alpha)$. There will be a point of intersection if $C_{10} \ge C_{00}$ and $C_{01} \ge C_{11}$ (for instance, when $C_{00} = C_{11} = 0$), and $\alpha_M$ satisfies the equation
\begin{equation}
\label{equation minmax}
\rho(\alpha_M) + \left[\frac{C_{01}-C_{00}}{C_{10}-C_{11}}\right] \alpha_M - \left[\frac{C_{10}-C_{00}}{C_{10}-C_{11}}\right] = 0.
\end{equation}
On the other hand, if $C_{11} \ge C_{01}$, then $\alpha_{M} = 1$; while if $C_{00} \ge C_{10}$, then $\alpha_{M} = 0$.

\subsection{Bayes Approach}
\label{subsec-Bayes}

An alternative for the decision-maker is to define a weighted expected cost or risk, where the risks under $H_0$ and $H_1$ are weighted by the respective prior probabilities of $H_0$ and $H_1$, resulting in the Bayes risk for $\delta^*(\alpha)$, given by
$$\mbox{BayesRisk}(\alpha) = \kappa_0 R_0(\alpha) + (1-\kappa_0) R_1(\alpha).$$
The decision-maker could then choose the $\alpha$ minimizing $\mbox{BayesRisk}(\alpha)$, denoted by $\alpha_B$. Here, $\kappa_0$ represents the decision-maker's prior probability that $H_0$ is true.  The solution is the $\alpha_B$ that satisfies
\begin{equation}
\label{Bayes solution}
\rho^\prime(\alpha_B) = \frac{\kappa_0}{1-\kappa_0} \frac{C_{01}-C_{00}}{C_{10} - C_{11}}.
\end{equation}
This choice of LoS, denoted by $\alpha_B$, leads to the decision function $\delta^*(\alpha_B)$ which coincides with the Bayes decision function, which is the one minimizing the Bayes risk among all decision functions (see, for instance, \cite{Sch95}). An indirect argument for this result is that Bayes decision functions are also of the form of the MP decision functions, hence since we minimized the Bayes risk among the MP decision functions, the $\alpha_B$ in (\ref{Bayes solution}) therefore finds the Bayes decision function.

\subsection{Optimizing Knowledge Accrual}
\label{subsec-change}

Another alternative for the decision-maker is to focus instead on optimizing the change that will accrue about his knowledge of $H_0$ or $H_1$ based on the result of his decision function. It is then sensible to choose an LoS $\alpha$ such that the expected log-likelihood ratios, based on $D$, under $H_0$ and $H_1$, achieve their maximum difference. The expected log-likelihood ratio based on $D$, under $H_0$ and $H_1$, are respectively,
\begin{eqnarray*}
E_0[\log \Lambda_D] & = & \alpha \log\left[\frac{\rho(\alpha)}{\alpha}\right] + (1-\alpha) \log\left[\frac{1-\rho(\alpha)}{1-\alpha}\right]; \\
E_1[\log \Lambda_D] & = & \rho(\alpha) \log\left[\frac{\rho(\alpha)}{\alpha}\right] + (1-\rho(\alpha)) \log\left[\frac{1-\rho(\alpha)}{1-\alpha}\right].
\end{eqnarray*}
The difference between these two expected log-likelihood ratios is
\begin{displaymath}
\mathcal{D}(\alpha) = E_1[\log \Lambda_D] - E_0[\log \Lambda_D] = [\rho(\alpha) - \alpha] \log\left[\frac{\rho(\alpha)}{\alpha} \frac{1 - \alpha}{1 - \rho(\alpha)}\right].
\end{displaymath}
The decision-maker could then choose the $\alpha$ that maximizes $\mathcal{D}(\alpha)$, that is,
\begin{equation}
\label{best discrimination}
\alpha_{D} = {\arg\max}_{\alpha \in [0,1]} \mathcal{D}(\alpha).
\end{equation}
Note that in thisß approach for choosing the LoS, no consideration is given to the cost consequences of the decision since the main focus is to optimize the gain in accrued knowledge about $H_0$ and $H_1$ from the result of the decision function.

\subsection{Sample Size Determination}
\label{subsec-sample size determination}

An important issue that arises is that of determining the proper sample size $n$ to use in a given study. The ROC function $\rho(\cdot)$ depends on such a sample size, so that we may write $\rho(\alpha;n)$ and $\rho^\prime(\alpha;n)$. In each of the approaches for determining the LoS to utilize, the optimal $\alpha$ will then also depend on $n$, so we may write this as $\alpha^*(n)$. We could then determine the appropriate sample size $n^*$ by specifying a lower bound $b > 0$ for the logarithm of the odds-ratio and letting  
\begin{eqnarray}
n^* = n^*(b)  & = & \min\left\{n \ge 1: \log\left[\frac{\rho(\alpha^*(n);n)}{\alpha^*(n)} \frac{1-\alpha^*(n)}{1 - \rho(\alpha^*(n);n)}\right] \ge b\right\} \nonumber \\
& = & \min\left\{n \ge 1:   \left(\frac{\rho(\alpha^*(n);n)}{1 - \rho(\alpha^*(n);n)}\right) \ge 
\left(\frac{\alpha^*(n)}{1-\alpha^*(n)}\right) e^b\right\}.
\label{sample size}
\end{eqnarray}
This sample size determination approach is in contrast to existing procedures where an LoS value is specified, usually to be $\alpha = .05$, and a lower threshold for the power is also specified, say 0.95, and then the desired sample size is determined.

\subsection{Concrete Example}
\label{subsec-concrete example for LoS choice}

We demonstrate the ideas in this section by using a simple normal model. Let $X_1, X_2, \ldots, X_n$ be IID $N(\mu,\sigma^2)$ with $\sigma^2$ known, and consider the problem of deciding between $H_0: \mu = \mu_0$ versus $H_1: \mu = \mu_1$ with $\mu_1 > \mu_0$. Since the sample mean $\bar{X}$ is sufficient for $\mu$, we reduce the problem to just having $\bar{X} \sim N(\mu,\sigma^2/n)$. The MP decision function of size $\alpha$ is
$$\delta(\bar{X};\alpha) = I\{\bar{X} > \mu_0 + \Phi^{-1}(1-\alpha) (\sigma/\sqrt{n})\}$$
whose associated ROC function is, with $\xi = (\mu_1-\mu_0)/\sigma$ being the standardized effect size,
\begin{equation}
\label{rho in concrete example 2}
\rho(\alpha) = 1 - \Phi\left[\Phi^{-1}(1-\alpha) - \xi\sqrt{n}\right].
\end{equation}
Its derivative is
\begin{equation}
\label{deriv concrete example 2}
\rho^\prime(\alpha) = \frac{\phi\left[\Phi^{-1}(1-\alpha) - \xi\sqrt{n}\right]}{\phi\left[\Phi^{-1}(1-\alpha)\right]} = \exp\left\{\xi\sqrt{n}[\Phi^{-1}(1-\alpha) - \xi\sqrt{n}/2]\right\}.
\end{equation}
We now determine $\alpha_M$, $\alpha_B$, and $\alpha_D$ according to the prescriptions in subsections \ref{subsec-minimax}, \ref{subsec-Bayes}, and \ref{subsec-change}, respectively.

To determine $\alpha_M$ we need to solve the equation
$$\rho(\alpha) + R_1 \alpha - R_0 = 0$$
with $R_1 = (C_{01} - C_{00})/(C_{10} - C_{11})$ and $R_0 = (C_{10} - C_{00})/(C_{10} - C_{11})$. Letting $w = \Phi^{-1}(1 - \alpha)$, so that $\alpha = 1 - \Phi(w)$, the resulting equation to be solved with respect to $w$ is
$$\Phi(w - \xi\sqrt{n}) + R_1 \Phi(w) - R_2 = 0$$
with $R_2 = R_1 +1 - R_0 = (C_{01} - C_{11})/(C_{10} - C_{11})$. This equation could be solved numerically, e.g., via Newton-Raphson algorithm, to obtain $w_M$, from which we then obtain $\alpha_M = 1 - \Phi(w_M)$. In the special case where $R_1 = 1$ and $R_2 = 1$, which arises when $C_{01} = C_{10} > 0$ and $C_{00} = C_{11} = 0$, we directly obtain $w_M = \xi\sqrt{n}/2$ leading to
$$\alpha_M = \Phi\left[\frac{-\xi\sqrt{n}}{2}\right] \quad \mbox{and} \quad \rho(\alpha_M) = \Phi\left[\frac{\xi\sqrt{n}}{2}\right].$$

To determine $\alpha_B$, we need to solve the equation
$$\rho^\prime(\alpha) = R_3 \equiv \frac{\kappa_0}{1-\kappa_0} \frac{C_{01}-C_{00}}{C_{10} - C_{11}}.$$
Using the expression in (\ref{deriv concrete example 2}), and solving directly for $\alpha_B$, we find that
\begin{eqnarray*}
\alpha_B & = & \Phi\left[-\frac{\xi\sqrt{n}}{2} - \frac{1}{\xi\sqrt{n}} \log(R_3)\right] \quad \mbox{and} \quad
\rho(\alpha_B)  =  \Phi\left[\frac{\xi\sqrt{n}}{2} - \frac{1}{\xi\sqrt{n}} \log(R_3)\right].
\end{eqnarray*}
In the special case when $R_3 = 1$, such as when $\kappa_0 = 1/2$, the least favorable prior, and with $C_{01} = C_{10} > 0$ and $C_{00} = C_{11} = 0$, we obtain
$$\alpha_B  =  \Phi\left[-\frac{\xi\sqrt{n}}{2}\right] \quad \mbox{and} \quad
\rho(\alpha_B)  =  \Phi\left[\frac{\xi\sqrt{n}}{2}\right],$$
thus coinciding with the special case for $\alpha_M$.

For $\alpha_D$, we need to find the maximizer of the mapping $\alpha \mapsto H(\alpha)$ with
$$H(\alpha) = [\rho(\alpha) - \alpha] \log\left[\frac{\rho(\alpha)}{\alpha} \frac{1-\alpha}{1-\rho(\alpha)}\right].$$
Observe that $H(\alpha) > 0$ for $\alpha \in (0,1)$; $\lim_{\alpha \downarrow 0} H(\alpha) = 0$, and $\lim_{\alpha \uparrow 1} H(\alpha) = 0$ by Proposition \ref{prop-ROC properties}. Differentiating $H(\alpha)$, we obtain
$$H^\prime(\alpha) = [\rho^\prime(\alpha) - 1] \log\log\left[\frac{\rho(\alpha)}{\alpha} \frac{1-\alpha}{1-\rho(\alpha)}\right] + [\rho(\alpha) - \alpha] \left[
\frac{\rho^\prime(\alpha)}{\rho(\alpha)(1-\rho(\alpha))} - \frac{1}{\alpha(1-\alpha)}\right].$$
From the special case when solving for $\alpha_B$, we have $\rho^\prime(\alpha) - 1 = 0$ satisfied by $\alpha^* = \Phi(-\xi\sqrt{n}/2)$, and for this $\alpha^*$ we have $\rho(\alpha^*) =  1 - \alpha^*$ and $\alpha^* = 1 - \rho(\alpha^*)$, so that $H^\prime(\alpha^*) = 0$. Furthermore, since $H^\prime(\alpha)$ goes from positive values to negative values as $\alpha$ goes from 0 to 1, then in fact $\alpha^* = \Phi(-\xi\sqrt{n}/2)$ is the maximizer of $\alpha \mapsto H(\alpha)$. Therefore, we have shown that
$$\alpha_D = \alpha^* =  \Phi\left[\frac{-\xi\sqrt{n}}{2}\right] \quad \mbox{and} \quad \rho(\alpha_D)  = \Phi\left[\frac{\xi\sqrt{n}}{2}\right].$$
This solution coincides with the solutions for $\alpha_M$ and $\alpha_B$ under the special cases arising from $\kappa_0 = 1/2, C_{01} = C_{10} > 0$, and $C_{00} = C_{11} = 0$. Note that the $\alpha_D$ does not depend on $\kappa_0$ and the $C_{ij}$'s since its derivation does not take into account the decision-maker's cost consequences of his decision and his prior knowledge about $H_0$.

To be able to concretely compare the values of $\alpha_M$, $\alpha_B$, and $\alpha_D$, we obtain them under this normal one-sample setting and for different combinations of values of $C_{01}$, $C_{10}$, $\kappa_0$, $n$, and $\xi = (\mu_1-\mu_0)/\sigma$, the effect size. We set $C_{00} = C_{11} = 0$. The computation of $\alpha_M$ utilized the Newton-Raphson algorithm. Table \ref{table-values of alphas} summarizes the values obtained under the different scenarios. From this table, observe that as the effect size increases, the LoS values from each of the three approaches decrease. Furthermore, note that when $\xi = .5$ and under the case where $C_{01} = C_{10} = 1, \kappa_0 = .5$, the LoS values are much larger than $\alpha = .05$. The reason for this is that with these higher LoS values, more information accrues based on the outcome $d$ of the decision function compared to when $\alpha = .05$. In addition, notice the drastic change in $\alpha_M$ and $\alpha_B$ when the cost consequences are not equal compared to when they were equal. In essence, the choice of LoS should depend on the other elements of the decision problem, hence a fixed threshold of $\alpha = .05$, for whatever decision problem at hand, is just untenable and unjustifiable.

{
\begin{table}
\caption{LoS values $\alpha_M$, $\alpha_B$, and $\alpha_D$ associated with the different approaches for determining an optimal LoS for the one-sample normal model under different combinations of costs $C_{01}$, $C_{10}$, $H_0$ prior probability $\kappa_0$, sample size $n$, and effect size $\xi = (\mu_1-\mu_0)/\sigma$.}
\label{table-values of alphas}
\begin{center}
\begin{tabular}{|c|ccccc|ccc|} \hline
Setting \# & $C_{01}$ & $C_{10}$ & $\kappa_0$ & $n$ & $\xi$ & $\alpha_M$ & $\alpha_B$ & $\alpha_D$ \\ \hline
1        & 1 & 1 & 0.5 & 1 & 0.5 & 0.401294 & 0.401294 & 0.401294 \\
2          & 1 & 1 & 0.5 & 1 & 1 & 0.308538 & 0.308538 & 0.308538 \\
3    & 1 & 1 & 0.5 & 1 & 2 & 0.158655 & 0.158655 & 0.158655 \\ \hline
4        & 1 & 1 & 0.5 & 5 & 0.5 & 0.288075 & 0.288075 & 0.288075 \\
5          & 1 & 1 & 0.5 & 5 & 1 & 0.131776 & 0.131776 & 0.131776 \\
6    & 1 & 1 & 0.5 & 5 & 2 & 0.012674 & 0.012674 & 0.012674 \\ \hline
7       & 1 & 1 & 0.25 & 1 & 0.5 & 0.401294 & 0.974246 & 0.401294 \\
8         & 1 & 1 & 0.25 & 1 & 1 & 0.308538 & 0.725284 & 0.308538 \\
9   & 1 & 1 & 0.25 & 1 & 2 & 0.158655 & 0.326105 & 0.158655 \\ \hline
10      & 1 & 1 & 0.25 & 5 & 0.5 & 0.288075 & 0.664075 & 0.288075 \\
11        & 1 & 1 & 0.25 & 5 & 1 & 0.131776 & 0.265422 & 0.131776 \\
12  & 1 & 1 & 0.25 & 5 & 2 & 0.012674 & 0.023273 & 0.012674 \\ \hline
13         & 10 & 1 & 0.5 & 1 & 0.5 & 0.081468 & 1e-06 & 0.401294 \\
14        & 10 & 1 & 0.5 & 1 & 1 & 0.068649 & 0.002535 & 0.308538 \\
15  & 10 & 1 & 0.5 & 1 & 2 & 0.040108 & 0.015727 & 0.158655 \\ \hline
16      & 10 & 1 & 0.5 & 5 & 0.5 & 0.065308 & 0.004416 & 0.288075 \\
17        & 10 & 1 & 0.5 & 5 & 1 & 0.034035 & 0.015866 & 0.131776 \\
18  & 10 & 1 & 0.5 & 5 & 2 & 0.003666 & 0.002971 & 0.012674 \\ \hline
19     & 10 & 1 & 0.25 & 1 & 0.5 & 0.081468 & 0.003931 & 0.401294 \\
20       & 10 & 1 & 0.25 & 1 & 1 & 0.068649 & 0.044193 & 0.308538 \\
21 & 10 & 1 & 0.25 & 1 & 2 & 0.040108 & 0.054579 & 0.158655 \\ \hline
22     & 10 & 1 & 0.25 & 5 & 0.5 & 0.065308 & 0.050932 & 0.288075 \\
23       & 10 & 1 & 0.25 & 5 & 1 & 0.034035 & 0.048814 & 0.131776 \\
24 & 10 & 1 & 0.25 & 5 & 2 & 0.003666 & 0.006118 & 0.012674 \\ \hline
\end{tabular}
\end{center}
\end{table}
}

Depending on which rationale the decision-maker uses for his determination of his LoS $\alpha^*$, he could then determine an appropriate sample size for his experiment or study according to the formula (\ref{sample size}) in subsection \ref {subsec-sample size determination}. We demonstrate this for the case of $\alpha_D$, which also happens to be the same solutions for $\alpha_M$ and $\alpha_B$ in special case where $C_{01} = C_{10} = 1$ and $\kappa_0 = .5$. Given a lower bound $b > 0$, from (\ref{sample size}), we seek the smallest integer $n$ satisfying
$$\left[\frac{\Phi(\xi\sqrt{n}/2)}{1 - \Phi(\xi\sqrt{n}/2)}\right] \ge \left[\frac{1 - \Phi(\xi\sqrt{n}/2)}{\Phi(\xi\sqrt{n}/2)}\right] e^b.$$
Directly solving this inequality, the desired sample size $n^* = n^*(b)$ is the smallest integer at least equal to
$$\bar{n} = \bar{n}(b) = \frac{4 \left[\Phi^{-1}\left(\frac{\exp(b/2)}{1 + \exp(b/2)}\right)\right]^2 \sigma^2}{(\mu_1-\mu_0)^2}.$$
For this sample size, determined by specifying the lower bound $b$, we find that
$$\alpha_D(b) = \frac{1}{1 + \exp(b/2)} \quad \mbox{and} \quad 
\rho(\alpha_D(b)) = \frac{\exp(b/2)}{1+\exp(b/2)}.$$
When $b=6$, this becomes
$$\bar{n}(b=6) = \frac{4 (1.6703)^2 \sigma^2}{(\mu_1-\mu_0)^2}.$$
For this $b = 6$ and $\bar{n}(b=6)$, we obtain $\alpha_D(b=6) = 0.0474$ and $\rho(\alpha_D(b=6)) = 0.9526$.
Recall that in the usual sample size calculation where we specify the LoS to be $\alpha = .05$ and with the power to be $1 - \beta = .95$, the required sample size is the smallest integer at least equal to
$$\tilde{n} = \frac{[\Phi^{-1}(1-\beta) + \Phi^{-1}(1-\alpha)]^2 \sigma^2}{(\mu_1-\mu_0)^2} =
\frac{4 (1.645)^2 \sigma^2}{(\mu_1 - \mu_0)^2}.$$
Interestingly, if one wants {\em the} $b$ that will lead to an LoS of exactly $\alpha = .05$, we find that you should take $b = 2 \log(95/5) = 5.888\bar{9}$, which is a rather mysterious number (at least compared to the perfect number\footnote{A perfect number is such that it is equal to the sum of its proper divisors. The number 6 is the smallest perfect number since it is the sum of its proper divisors 1, 2, and 3.} $b = 6$), lending further mystery to the common, typical, conventional, and age-old choice of an LoS of $\alpha = 0.05$!

\section{Summary and Concluding Remarks}
\label{sec-Conclusions}

Humankind's desire to acquire knowledge in the pursuit of the truth of whatever phenomenon is at hand, be it about ``eternal and long-lasting truths'' according to Efron, or of matters more ephemeral and temporally of passing interest, such as President Trump's income taxes, is deeply ingrained in our DNA. The search for truth has especially  both fascinated and confounded us all in the past few years, what with Trump's adviser Rudy Giuliani exclaiming on national television about ``Truth isn't Truth'' or with the proliferation of fake news for nefarious purposes which are unwittingly being facilitated by technological platforms in the social media arena. In fact, even in books and fictional novels, the search for truth is of paramount interest. Indeed, from the best-selling fictional novel titled {\em The Art of Racing in the Rain} by Garth Stein \cite{GarthStein18}, which has been adapted for film by Walt Disney Studios Motion Pictures and released this year, we encounter a dialogue involving the main canine character, Enzo:

\begin{quote}
``Inside each of us resides the truth,'' I began, ``the absolute truth. But sometimes the truth is hidden in a hall of mirrors. Sometimes we believe we are viewing the real thing, when in fact we are viewing a facsimile, a distortion. As I listen to this trial, I am reminded of the climactic scene of a James Bond film, {\em The Man with the Golden Gun}. James Bond escaped his hall of mirrors by breaking the glass, shattering the illusions, until only the true villain stood before him. We, too, must shatter the mirrors. We must look into ourselves and root out the distortions until that thing which we know in our hearts is perfect and true, stands before us. Only then will justice be served.''
\end{quote}

The development of appropriate methods useful in the search for truth and the acquisition of knowledge using data is one of the foremost, if not the main, goal of statisticians. We, statisticians and data scientists, must continue re-examining existing methods and refine them if there are problems, so that we may facilitate the `breaking of the glass and the shattering of the illusions'. Controversies have arisen in the use of existing statistical procedures for decision-making, especially those that utilize $P$-values and the setting of hard thresholds, such as $\alpha = .05$, for the level of significance.  Thus, there is a need to re-examine these existing procedures to determine if they could be improved or altogether replaced by newer methods. This paper is in this spirit. It re-examined existing statistical decision-making approaches in the most fundamental of settings, that of deciding between, or sequentially acquiring knowledge about, two simple hypotheses. It is expected that the clarity provided by dealing with this fundamental setting will result in improving methods for more complicated settings, in a similar vein that the Neyman-Pearson Fundamental Lemma \cite{NeyPea33} enabled the solutions of hypothesis testing problems in complicated settings.

The Neyman-Pearson MP decision function for deciding between two simple hypotheses is still the linchpin of the theory of statistical decision-making. The classical approach is to specify the LoS $\alpha$, which is the maximum allowable probability of a Type I error. One then looks for that decision function that maximizes the power under $H_1$, the MP decision function. It turns out, however, that the more beneficial approach is to consider the stochastic process of MP decision functions indexed by $\alpha$ and to look at the receiver operating characteristic function of the decision process, which is the power of the MP decision function as a function of $\alpha$. We then clarified the notion of the $P$-functional, usually called the $P$-value statistic, as a functional of the decision process.
The notion of replicability was then examined in the context of observing the outcomes of a decision function and the $P$-functional. We concur with the NAS report \cite{NASReport2019} that replicability should be viewed in `the context of an entire body of evidence, rather than an individual study or an individual replication.' This led to the question on how the results of studies should be reported. We argued that when reporting the outcome of the decision function, that it is imperative to accompany it with the value of either the likelihood ratio or the logarithm of the likelihood ratio based on observing the decision function. This value will provide a measure of the strength of support, either for $H_0$ or $H_1$, given the decision. Whatever LoS value is used will be automatically incorporated into this summary measure, so that a decision-maker is actually free to choose whatever LoS value he desires, so long as it is not decided after seeing the data. 

If one wants to report the value $p$ of the $P$-functional, this should {\em simply} be used to make the decision, which is to reject $H_0$ if and only if $p$ is no more than the specified LoS $\alpha$, which in fact corresponds to the decision made using the level $\alpha$ MP decision function. The value itself of $p$ could be misleading in terms of the information it provides about whether the results of the study supports $H_0$ or $H_1$. For instance, it is possible that $p = 0.0001$, but this could still be more supportive of $H_0$ than of $H_1$. Rather, what should be reported is the value of the $P$-functional density under $H_1$, given by $\rho^\prime(p)$, or equivalently $\log \rho^\prime(p)$, which is actually the likelihood ratio or the log-likelihood ratio, respectively, based on observing the $P$-functional. This quantity is more informative compared to just the value of $p$. Furthermore, log-likelihood ratio profile curves or contour plots could also be provided, especially when dealing with composite alternative hypothesis.

Discussion was then presented on updating knowledge of $H_0$ and $H_1$ when given the realization $x$ of the random observable $X$, the value $d$ of a decision function $\delta^*(\alpha)$, or the value $p$ of the $P$-functional, via Bayes theorem. It is demonstrated that a coherent way of acquiring knowledge about $H_0$ and $H_1$ is through this process of sequential learning, where the current states of knowledge of $H_0$ and $H_1$ are updated, using Bayes theorem, when results from new studies are reported. It is established that if there is no publication or reporting bias, then the sequence of posterior probabilities will converge to 1 (0) if $H_0$ is true (false). Through sequential learning, any study performed with integrity and honesty, whatever its results, will contribute to the acquisition of knowledge about $H_0$ and $H_1$. This is the ideal situation of a community of researchers, both cooperatively and competitively, seeking the truth about a certain phenomenon.  However, it was also demonstrated that in the presence of publication or reporting bias, a monkey-wrench is thrown into the knowledge updating that, {\em even under $H_0$}, it is possible to have the sequence of posterior probabilities of $H_0$ to converge to 0, instead of 1.

It was argued that a decision-maker is free to choose any LoS value that he desires since it will automatically be accounted for by the likelihood ratio based on the realized decision which should accompany the decision. In essence, decision-makers should not consider an LoS value of $\alpha = .05$ as having been `carved on stone tablets.'  In fact, insisting on $\alpha = .05$ which is a value not justifiable within the theory of statistical decision-making, renders the mathematical theory `impure', and this has led to controversies and criticisms.  The question therefore arose on how a decision-maker could optimally choose his LoS. Three approaches were discussed for determining such an optimal LoS, each depending on the viewpoint that the decision-maker is operating on. These approaches led to LoS values which are justifiable within the theory, thereby making the theory `beautiful' and free of arbitrary inputs. With these approaches to deciding on the LoS, a new procedure for sample size determination was also developed. 
The ideas in this paper were demonstrated using concrete examples pertaining to a two-sample problem, a one-sample problem, and the famous Fisher's lady tea-tasting experiment. It is hoped that the ideas presented and the proposed changes could improve existing statistical decision-making methods and hopefully eliminate, or at least lessen, the criticisms being hurled against these existing statistical decision-making methods. 

\section*{Acknowledgements}

This research was partially supported by NIH Grant P30GM103336-01A1 and the 
Center for Colon Cancer Research at the University of South Carolina (UofSC). The author acknowledges his regular walking companion, Albert E.!, since many of the ideas arose during their walks.

\bibliography{SearchForTruth}

\begin{thebibliography}{10}

\bibitem{Bak16}
Monya Baker.
\newblock Statisticians issue warning on $p$ values.
\newblock {\em Nature}, 531:151, 2016.

\bibitem{BenHoc95}
Yoav Benjamini and Yosef Hochberg.
\newblock Controlling the false discovery rate: a practical and powerful
  approach to multiple testing.
\newblock {\em J. Roy. Statist. Soc. Ser. B}, 57(1):289--300, 1995.

\bibitem{Ber03}
James~O. Berger.
\newblock Could {F}isher, {J}effreys and {N}eyman have agreed on testing?
\newblock {\em Statist. Sci.}, 18(1):1--32, 2003.
\newblock With comments and a rejoinder by the author.

\bibitem{Car19}
Aaron~E. Carroll.
\newblock Meat's {B}ad for {Y}ou! {N}o, {I}t's {N}ot! {H}ow experts see
  different things in the data.
\newblock {\em The New York Times}, October 1 2019.

\bibitem{Chr05}
Ronald Christensen.
\newblock Testing {F}isher, {N}eyman, {P}earson, and {B}ayes.
\newblock {\em Amer. Statist.}, 59(2):121--126, 2005.

\bibitem{Daw06}
Richard Dawkins.
\newblock {\em The {S}elfish {G}ene}.
\newblock Oxford: Oxford University Press, 30th anniversary ed. edition, 2006.

\bibitem{EfronWSC2019}
Bradley Efron.
\newblock Prediction, {E}stimation and {A}ttribution.
\newblock Recorded address to the 2019 World Statistics Congress held in Kuala
  Lumpur, Malaysia, August 2019.

\bibitem{Fis71}
Ronald~A. Fisher.
\newblock {\em Design of {E}xperiments}.
\newblock Macmillan, 9th edition, 1971 [1935].

\bibitem{GigMar15}
Gerd Gigerenzer and Julian Marewski.
\newblock Surrogate science: The idol of a universal method of scientific
  inference.
\newblock {\em Journal of Management}, 41:421--440, 2015.

\bibitem{Gre19}
Sander Greenland.
\newblock Valid $p$-values behave exactly as they should: Some misleading
  criticisms of $p$-values and their resolution with $s$-values.
\newblock {\em The American Statistician}, 73:sup1:106--114, 2019.

\bibitem{HabPen11}
Joshua~D. Habiger and Edsel~A. Pe\~{n}a.
\newblock Randomised {$P$}-values and nonparametric procedures in multiple
  testing.
\newblock {\em J. Nonparametr. Stat.}, 23(3):583--604, 2011.

\bibitem{HalKra02}
Heiko Haller and Stefan Krauss.
\newblock Misinterpretations of significance: A problem students share with
  their teachers?
\newblock {\em Methods of Psychological Research Online}, 7(1), 2002.

\bibitem{Ion05}
Ioannidis JPA.
\newblock Why most published research findings are false.
\newblock {\em PLOS Medicine}, 2(8), 2005.

\bibitem{FriBurHanWoo19}
Ronald D.~Fricker Jr., Katherine Burke, Xiaoyan Han, and William~H. Woodall.
\newblock Assessing the statistical analyses used in basic and applied social
  psychology after their $p$-value ban.
\newblock {\em The American Statistician}, 73:sup1:374--384, 2019.

\bibitem{KufWal19}
Todd~A.\ Kuffner and Stephen~G.\ Walker.
\newblock Why are $p$-{V}alues {C}ontroversial?
\newblock {\em The American Statistician}, 73:1--3, 2019.

\bibitem{LehRom05}
E.~L. Lehmann and Joseph~P. Romano.
\newblock {\em Testing statistical hypotheses}.
\newblock Springer Texts in Statistics. Springer, New York, third edition,
  2005.

\bibitem{Lyn19}
Peter Lynch.
\newblock How a tea-tasting test led to a breakthrough in statistics.
\newblock {\em The Irish Times}, September 5, 2019.

\bibitem{McSGalGelRobTac19}
Blakeley~B. McShane, David Gal, Andrew Gelman, Christian Robert, and
  Jennifer~L. Tackett.
\newblock Abandon statistical significance.
\newblock {\em The American Statistician}, 73:sup1:235--245, 2019.

\bibitem{NASReport2019}
{National Academies of Sciences, Engineering, and Medicine}.
\newblock {\em Reproducibility and {R}eplicability in {S}cience}.
\newblock The National Academies Press, Washington, DC, 2019.

\bibitem{NeyPea33}
J.~Neyman and E.~S. Pearson.
\newblock On the problem of the most efficient tests of statistical hypotheses.
\newblock {\em Phil. Trans. R. Soc. Ser. A}, 231:289--337, 1933.

\bibitem{Nuz15}
Regina Nuzzo.
\newblock Fooling ourselves.
\newblock {\em Nature}, 256:182--185, 2015.

\bibitem{PenHabWu11}
Edsel~A. Pe\~{n}a, Joshua~D. Habiger, and Wensong Wu.
\newblock Power-enhanced multiple decision functions controlling family-wise
  error and false discovery rates.
\newblock {\em Ann. Statist.}, 39(1):556--583, 2011.

\bibitem{PenHabWu15}
Edsel~A. Pe\~{n}a, Joshua~D. Habiger, and Wensong Wu.
\newblock Classes of multiple decision functions strongly controlling {FWER}
  and {FDR}.
\newblock {\em Metrika}, 78(5):563--595, 2015.

\bibitem{R}
{R Core Team}.
\newblock {\em R: {A} {L}anguage and {E}nvironment for {S}tatistical
  {C}omputing}.
\newblock R Foundation for Statistical Computing, Vienna, Austria, 2018.

\bibitem{Res14}
Sidney~I. Resnick.
\newblock {\em A {P}robability {P}ath}.
\newblock Modern Birkh\"{a}user Classics. Birkh\"{a}user/Springer, New York,
  2014.
\newblock Reprint of the fifth (2005) printing of the 1999 original
  [MR1664717].

\bibitem{Sch95}
Mark~J. Schervish.
\newblock {\em Theory of statistics}.
\newblock Springer Series in Statistics. Springer-Verlag, New York, 1995.

\bibitem{NYT1919}
{Special Cable to the New York Times}.
\newblock {LIGHTS ALL ASKEW IN THE HEAVENS}.
\newblock {\em The New York Times}, page~17, November 10, 1919.

\bibitem{GarthStein18}
Garth Stein.
\newblock {\em The {A}rt of {R}acing in the {R}ain}.
\newblock HarperCollins e-books, 2018.

\bibitem{SzuIoa17}
Denes Szucs and John Ioannidis.
\newblock When {N}ull {H}ypothesis {S}ignificance {T}esting is {U}nsuitable for
  {R}esearch: {A} {R}eassessment.
\newblock {\em Frontiers of Human Neuroscience}, 11:390, 2017.

\bibitem{TraMar15}
David Trafimow and Michael Marks.
\newblock Editorial.
\newblock {\em Basic and Applied Social Psychology}, 37:1--2, 2015.

\bibitem{WasLaz16}
Ronald Wasserstein and Nicole Lazar.
\newblock Editorial: The {ASA} statement on $p$-values: Context, process and
  purpose.
\newblock {\em The American Statistician}, 70:129--133, 2016.

\bibitem{WasSchLaz19}
Ronald~L. Wasserstein, Allen~L. Schirm, and Nicole~A. Lazar.
\newblock Moving to a {W}orld {B}eyond ``$p < 0.05$''.
\newblock {\em The American Statistician}, 73:1--19, 2019.

\bibitem{Zer19}
Dena Zeraatkar, Mi~Ah Han, Gordon~H. Guyatt, Robin~W.M. Vernooij, Regina
  El~Dib, Kevin Cheung, Kirolos Milio, Max Zworth, Jessica~J. Bartoszko,
  Claudia Valli, Montserrat Rabassa, Yung Lee, Joanna Zajac, Anna
  Prokop-Dorner, Calvin Lo, Malgorzata~M. Bala, Pablo Alonso-Coello, Steven~E.
  Hanna, and Bradley~C. Johnston.
\newblock {Red and Processed Meat Consumption and Risk for All-Cause Mortality
  and Cardiometabolic Outcomes: A Systematic Review and Meta-analysis of Cohort
  Studies}.
\newblock {\em Annals of Internal Medicine}, 10 2019.

\end{thebibliography}
\bibliographystyle{plain}

\end{document}